\documentclass[10pt,reqno,final]{amsart}
\usepackage{epsfig,amssymb,amsmath,version}
\usepackage{amssymb,version,graphicx,fancybox,mathrsfs}
\usepackage[notcite,notref]{showkeys}
\usepackage{url,hyperref}
\usepackage{subfigure}
\usepackage{color}
\usepackage{stmaryrd}
\usepackage{multirow}
\usepackage{booktabs,siunitx}
\usepackage{bm} 
\usepackage {verbatim}
\usepackage{booktabs}
\usepackage{multirow}
\usepackage[title]{appendix}
\usepackage[ruled,vlined]{algorithm2e}
\allowdisplaybreaks[4]
\textheight=21.6cm
\textwidth=15.4cm
\setlength{\oddsidemargin}{0.9cm}
\setlength{\evensidemargin}{0.9cm}


\catcode`\@=11 \theoremstyle{plain}
\@addtoreset{equation}{section}   
\renewcommand{\theequation}{\arabic{section}.\arabic{equation}}
\@addtoreset{figure}{section}
\renewcommand\thefigure{\thesection.\@arabic\c@figure}
\newtheorem{thm}{\bf Theorem}

\newenvironment{theorem}{\begin{thm}} {\end{thm}}
\newtheorem{cor}{\bf Corollary}
\newtheorem{prop}{Proposition}[section]

\newtheorem{lmm}{\bf Lemma}

\theoremstyle{remark}
\newtheorem{rem}{\bf Remark}[section]

\def \epsilon {{\varepsilon}}

\definecolor{bgblue}{rgb}{0.04,0.39,0.54}
\definecolor{lired}{rgb}{0.3, 0.0, 0.0}
\definecolor{ligreen}{rgb}{0.0, 0.3, 0.0}
\definecolor{liblue}{rgb}{0.9, 1.0, 1.0}
\definecolor{gray}{rgb}{0.6, 0.6, 0.6}
\definecolor{sky}{rgb}{0.3, 1.0, 1.0}
\definecolor{bunhong}{rgb}{1.0, 0.3, 1.0}
\definecolor{yellow}{rgb}{0.97, 1, 0.0}
\definecolor{liyellow}{rgb}{0.9, 0.8, 0.0}
\definecolor{cengse}{rgb}{0.00,0.40,0.29}

\newcommand{\bs}[1]{\boldsymbol{#1}}

\renewcommand \wedge \times


\begin{document}
\bibliographystyle{plain}

{\title[Rotational discrete gradient method for biaxial elasticity gradient flow] {A second-order $SO(3)$-preserving and energy-stable scheme for orthonormal frame gradient flow model of biaxial nematic liquid crystals}
\author[
  H. Wang,\,    J. Xu\,  $\&$\,  Z. Yang
  ]{
	  \;\; Hanbin Wang${}^{1}$,   \;\;  Jie Xu${}^{2}$ \;\; and\;\; Zhiguo Yang${}^{*,3}$
	  }
\thanks{
\noindent${}^{1}$ School of Mathematical Sciences, Shanghai Jiao Tong University, Shanghai 200240, China. Email: hb.wang@sjtu.edu.cn (H. Wang).
  \\
  \indent${}^{2}$ LSEC and NCMIS, Institute of Computational Mathematics and Scientific/Engineering Computing (ICMSEC), Academy of Mathematics and Systems Science (AMSS), Chinese Academy of Sciences, Beijing, China. Emails: xujie@lsec.cc.ac.cn (J. Xu).
  \\
    \indent ${}^{3*}$ Corresponding author. School of Mathematical Sciences, MOE-LSC and CMA-Shanghai, Shanghai Jiao Tong University, Shanghai 200240, China. Email: yangzhiguo@sjtu.edu.edu (Z. Yang)
	  }
	  }
\keywords{Biaxial liquid crystal, Energy stability, Orthonormal frame preservation, Discrete gradient, Rotational discrete gradient method} \subjclass[2000]{65N35, 65N22, 65F05, 35J05}

\begin{abstract}
  In this paper, we present a novel second-order generalised rotational discrete gradient scheme for numerically approximating the orthonormal frame gradient flow of biaxial nematic liquid crystals.
  This scheme relies on reformulating the original gradient flow system into an equivalent generalised ``rotational'' form.
  A second-order discrete gradient approximation of the energy variation is then devised such that it satisfies an energy difference relation.
  The proposed numerical scheme has two remarkable properties: (i) it strictly obeys the orthonormal property of the tensor field and (ii) it satisfies the energy dissipation law at the discrete level, regardless of the time step sizes.
  We provide ample numerical results to validate the accuracy, efficiency, unconditional stability and $SO(3)$-preserving property of this scheme.
  In addition, comparisons of the simulation results between the biaxial orthonormal frame gradient flow model and uniaxial Oseen-Frank gradient flow are made to demonstrate the ability of the former to characterize non-axisymmetric local anisotropy. 
\end{abstract}
\maketitle

\section{Introduction}\label{sect: Int}
Biaxial nematic liquid crystals are featured by spontaneously non-axisymmetric local anisotropy.
It was possibly proposed very early in 1970 \cite{freiser1970ordered} and confirmed experimentally decades later \cite{madsen2004thermotropic}, and is believed to have prospective applications in many areas such as semiconductors because of its distinct properties from uniaxial nematics \cite{berardi2008field, han2018highly, liu2016biaxial, mundoor2018hybrid}. 


In mathematical theories, similar to uniaxial nematics, models involving biaxial nematic phases are constructed at different levels.
When phase transitions are addressed, molecular model or tensor model needs to be adopted, which is constructed for spatially homogeneous cases \cite{bisi2006universal, sonnet2003dielectric, straley1974ordered,xu2014microscopic,xu2017transmission} and later further developed to include elasticity \cite{lubensky2002theory,xu2022symmetry,xu2020general,xu2018tensor}. 
The study of biaxial hydrodynamics is restricted to spatially homogeneous cases under steady shear flow \cite{li2010biaxial,sircar2008shear,sircar2009dynamics, xu2018onsager}. 
If one would focus on elasticity, a shortcut description is to assume that the local orientational order is exactly the ideal biaxial nematics. 
In this way, the local orientation is simplified to an orthonormal frame, and we refer to models by orthonormal frame field as frame models.
The elastic energy of orthonormal frame field \cite{alben1973phase,saupe1981elastic,stallinga1994theory} and the corresponding hydrodynamics \cite{brand1981hydrodynamics,brand1982number,brand1982theory,govers1985fluid,kini1984isothermal,liu1981hydrodynamic,saslow1982hydrodynamics} have been developed. 
Recently, the connection between molecular-theory-based tensor hydrodynamics and biaxial frame hydrodynamics is rigorously established \cite{li2023frame,li2023rigorous}. 
Computational investigation on dynamics of biaxial nematics in spatially inhomogeneous cases would be desirable but is not addressed in existing works. 

In this work, we focus on the numerical approximation of frame models. 
Let us denote the orthonormal frame field as $\mathbf{p}=\left(\bs{n}_1, \bs{n}_2, \bs{n}_3\right) \in S O(3)$.
The full biaxial frame hydrodynamics is a coupling system between $\mathbf{p}$ and the fluid velocity field $\bs{v}$.
Here, we consider a simplification of small velocity by assuming that $\bs{v}=0$.
In this case, the frame dynamics is written as 
\begin{subequations}\label{eq: gfmodelori}
\begin{align}
& \chi_1\frac{\partial \bs n_2}{\partial t} \cdot \bs{n}_3+\mathscr{L}_1 \mathcal{F}_{B i}=0, \\
& \chi_2\frac{\partial \bs n_3}{\partial t} \cdot \bs{n}_1+\mathscr{L}_2 \mathcal{F}_{B i}=0, \\
& \chi_3\frac{\partial \bs n_1}{\partial t} \cdot \bs{n}_2+\mathscr{L}_3 \mathcal{F}_{B i}=0, \\
& \mathbf{p}=\left(\bs{n}_1, \bs{n}_2, \bs{n}_3\right) \in S O(3).
\end{align}
\end{subequations}
In the above, 
$\mathscr{L}_i,\, i=1,2,3$ are the (variational) differential operators of infinitesimal rotations, defined as
\begin{equation}\label{eq: Li}
\left\{\begin{array}{l}
\mathscr{L}_1:=\boldsymbol{n}_3 \cdot \frac{\delta}{\delta {\boldsymbol{n}_2}}-\boldsymbol{n}_2 \cdot \frac{\delta}{\delta{\boldsymbol{n}_3}}, \\[4pt]
\mathscr{L}_2:=\boldsymbol{n}_1 \cdot \frac{\delta}{\delta{\boldsymbol{n}_3}}-\boldsymbol{n}_3 \cdot \frac{\delta}{\delta{\boldsymbol{n}_1}}, \\[4pt]
\mathscr{L}_3:=\boldsymbol{n}_2 \cdot \frac{\delta}{\delta{\boldsymbol{n}_1}}-\boldsymbol{n}_1 \cdot \frac{\delta}{\delta{\boldsymbol{n}_2}} .
\end{array}\right.
\end{equation}
The biaxial orientational elasticity, 
\begin{equation}\label{eq: bienergy}
\mathcal{F}_{B i}[\mathbf{p}]=\int f_{B i}(\mathbf{p}, \nabla \mathbf{p}) \mathrm{d} {\bs x}
\end{equation}
takes the form 
\begin{equation}\label{eq: density}
  \begin{aligned}
  f_{B i}(\mathbf{p}, \nabla \mathbf{p})= 
  & \frac{1}{2} \Big\{   K_1\left(\nabla \cdot \bs{n}_1\right)^2+K_2\left(\nabla \cdot \bs{n}_2\right)^2+K_3\left(\nabla \cdot \bs{n}_3\right)^2    \\
  & +K_4\left(\bs{n}_1 \cdot \nabla \times \bs{n}_1\right)^2+K_5\left(\bs{n}_2 \cdot \nabla \times \bs{n}_2\right)^2+K_6\left(\bs{n}_3 \cdot \nabla \times \bs{n}_3\right)^2 \\
  & +K_7\left(\bs{n}_3 \cdot \nabla \times \bs{n}_1\right)^2+K_8\left(\bs{n}_1 \cdot \nabla \times \bs{n}_2\right)^2+K_9\left(\bs{n}_2 \cdot \nabla \times \bs{n}_3\right)^2 \\
  & +K_{10}\left(\bs{n}_2 \cdot \nabla \times \bs{n}_1\right)^2+K_{11}\left(\bs{n}_3 \cdot \nabla \times \bs{n}_2\right)^2+K_{12}\left(\bs{n}_1 \cdot \nabla \times \bs{n}_3\right)^2 \\
  & +\gamma_1 \nabla \cdot \big[\left(\bs{n}_1 \cdot \nabla\right) \bs{n}_1-\left(\nabla \cdot \bs{n}_1\right) \bs{n}_1\big]+\gamma_2 \nabla \cdot\big[\left(\bs{n}_2 \cdot \nabla\right) \bs{n}_2-\left(\nabla \cdot \bs{n}_2\right) \bs{n}_2\big] \\
  & +\gamma_3 \nabla \cdot\big[\left(\bs{n}_3 \cdot \nabla\right) \bs{n}_3-\left(\nabla \cdot \bs{n}_3\right) \bs{n}_3\big]\Big\}. 
  \end{aligned}
\end{equation}	
The three $\gamma_i$ terms can be rewritten as surface integrals that vanish under suitable boundary conditions. 
The coefficients in the elastic energy, as well as three diffusion coefficients $\chi_i$, can be calculated from molecular parameters \cite{li2023frame,xu2018calculating}. 
We will demonstrate that system \eqref{eq: gfmodelori} can be viewed as a constrained gradient flow  for the biaxial elastic energy \eqref{eq: bienergy} later.

To the knowledge of the authors, there is no attempt to numerically solve the above system, maybe due to several challenges. 
First of all, it is a highly nonlinear and coupled system involving three unknown variables $\bs n_1,\bs n_2, \bs n_3$ and six point-wise constraints. The nonlinearity and coupling come from both the nonlinear terms in the energy variation of the biaxial elastic energy functional and the explicit orthonormality constraint. A possible way to eliminate the constraint is to reparameterize the local frame $\mathbf{p}(\bs x,t)$ in terms of Euler angles $\theta(\bs x,t)$, $\phi(\bs x,t)$ and $\psi(\bs x,t)$, 
\begin{equation*}
\mathbf{p}(\bs x,t)=
\begin{pmatrix}
\cos\theta \cos\phi \cos\psi-\sin\phi\sin\psi   & -\cos\theta \cos\phi \sin\psi-\sin\phi \cos\psi & \sin\theta\cos\phi    \\
\cos \theta \sin\phi \cos\psi+\cos\phi\sin\psi & -\cos\theta  \sin\phi \sin\psi+\cos\phi\cos\psi & \sin\theta \sin\phi \\
-\sin\theta \cos\psi  & \sin\theta \sin\psi & \cos\theta
\end{pmatrix}.
\end{equation*}
Actually, earlier work writes down the model as PDEs w.r.t. the three Euler angles \cite{govers1985fluid}. 
However, the free energy and its derivatives then turn into expressions extremely complicated. 
Secondly, suggested by the derivation from molecular parameters \cite{xu2018calculating}, the elastic coefficients in the free energy density \eqref{eq: density} can become highly disparate with the change of molecular architecture, 
leading to highly anisotropic derivative terms that are difficult to compute numerically. 
This requires efficient and robust numerical methods which can deal with highly anisotropic derivatives, strong nonlinearity and coupling. 
Last, and most importantly, this system obeys two fundamental physical properties: the point-wise orthonormality constraint of $\mathbf{p}(\bs x,t)$ and the energy dissipation law. 
It is highly desirable for a numerical method to preserve these two properties simultaneously at the discrete level. 
On one hand, the form of biaxial elastic energy density $ f_{B i}(\mathbf{p}, \nabla \mathbf{p})$ relies upon the assumption that $\mathbf{p}\in SO(3)$, and a violation of this constraint may lead to unpredictable computational results that are not characterized by biaxial elastic energy. 
On the other hand, a naive correction or projection of $\mathbf{p}$ onto $SO(3)$ may induce numerical instability and cause energy of the gradient flow system to increase unphysically.

The aim of this paper is to propose an efficient and robust numerical scheme for the orthonormal frame gradient flow system \eqref{eq: gfmodelori}--\eqref{eq: density} that can preserve the properties of orthonormality and energy dissipative law simultaneously at the discrete level. Among these two properties, preservation of the orthonormality constraint discretely plays a crucial role in producing physically meaningful simulation results. 
If focusing on this sole target, one may be tempted to use Euler angles. 
However, it is tricky to deal with the discontinuity across periods for these Euler angles and the energy functional becomes too cumbersome to design unconditionally energy-stable schemes based upon this formulation.
Another possible technique is to introduce Lagrange multipliers to reformulate the $SO(3)$-constrained gradient flow system into a saddle point problem \cite{tschumperle2002orthonormal}.
But systems of this kind are generally harder to solve numerically. 
Meanwhile, since \eqref{eq: gfmodelori}--\eqref{eq: density} is a dissipative system, it is favorable to solve it using energy-stable schemes. 
Classical method for developing energy stable schemes include, but not limited to convex splitting \cite{Elliott_and_Stuart-1993,Eyre-1998}, linear stablization \cite{Cai_Shen_and_Xu-2017,Shen_Yang-2010}, and auxiliary variable methods such as IEQ \cite{Yang-2016, Zhao_Yang_Li_and_Wang-2016} and SAV \cite{Shen_Xu_Yang-2018,Shen_Xu_Yang-2019,yang2020gpav}. 
However, preserving energy dissipation and orthonormal constraint simultaneously using classical methods is a highly nontrivial task. This challenge necessitates the development of new numerical methods. 

In this paper, we propose a second-order unconditionally energy-stable and $SO(3)$-preserving generalised rotational discrete gradient (gRdg) scheme for the orthonormal frame gradient flow system of biaxial nematic liquid crystal. Two key steps are essential for preserving these properties at the same time. One is to reformulate the gradient flow system with orthonormality constraint into an equivalent system, which can be viewed as a generalised ``rotational'' form. The other is to develop a discrete gradient approximation for the variational derivative of biaxial elastic energy functional such that it satisfies a certain energy difference relation.
These two ingredients, together with a time-centered Crank-Nicolson scheme, constitute the proposed method.
We present rigorous proof that the gRdg scheme strictly preserves the orthonormality constraint and satisfies a discrete energy dissipative law.
An inexact Newton-Krylov iterative method is employed to solve the nonlinear equation resulted from the gRdg method.
We also propose a time-adaptive strategy to the proposed scheme, which proves to be quite efficient for accelerating the nonlinear solution algorithm by ample numerical experiments. 
We examine the evolution of the orthonormal frames with various elastic parameters, compare them with those obtained from Oseen-Frank gradient flows of uniaxial nematics, and report a few featured evolution patterns of biaxial nematic phases. 
In particular, we investigate the dynamics with scale-different elastic coefficients obtained from molecular parameters.
In simulating these challenging numerical examples, the gRdg scheme demonstrates its robustness and accuracy, and intriguing dynamics is revealed that could draw further research interests. 

The rest of the paper is organized as follows. 
In Section \ref{sect: Rdg}, we reformulate the original orthonormal frame gradient flow system with explicit orthonormality constraint into an equivalent rotational form. We then present the gRdg scheme for temporal discretization of the reformulated system, and develop a discrete gradient approximation for the variation derivative of biaxial elastic energy satisfying the energy difference relation. 
We rigorously prove that the gRdg scheme is strictly $SO(3)$-preserving and unconditionally energy-stable. 
Techniques for implementation, including the iterative solver and time-adaptive strategy are also discussed.
In Section \ref{sect: num}, we conduct ample numerical experiments of orthonormal frame gradient flow with various elastic coefficients to demonstrate the accuracy, efficiency, exact orthonormality preservation and energy stability of gRdg method.
Concluding remarks are given in Section \ref{sect: remark}.

\section{The generalised rotational  discrete gradient Method}\label{sect: Rdg}

\subsection{Notations} 
We briefly introduce some notations and conventions that will be used throughout the paper. We use $\bs n=(n_i)$ and $\mathbf{p}=(p_{ij})$ to denote vectors and matrices respectively.
Define
\begin{equation}\label{eq: SO3}
SO(3):=\big\{\Lambda \in \mathbb{R}^{3\times 3}\big|  \Lambda^{\intercal} \Lambda=\mathbf{I},\; {\rm det}(\Lambda)=1  \big\},\qquad so(3):=\big\{ \Lambda \in \mathbb{R}^{3\times 3}\big| \Lambda^{\intercal}+\Lambda=\mathbf{0}   \big\},
\end{equation} 
are the set of noncommutative Lie group of orthonormal frames and the set of all skew symmetric tensors, respectively. The tangential space of $\mathbf{p}=(\bs n_1, \bs n_2, \bs n_3)\in SO(3)$ is given by 
\begin{equation}\label{eq: TpSO(3)}
T_{\mathbf{p}} S O(3):=\left\{{\mathbf{\Theta}}_{\mathbf{p}}:=\mathbf{p} {\mathbf{\Theta}},\; \forall \mathbf{\Theta} \in so(3)  \right\},
\end{equation}
for which we choose a basis as 
  \begin{equation}\label{eq: Tpbasis}
  \mathbf{V}_1=\left(0, \bs{n}_3,-\bs{n}_2\right), \quad \mathbf{V}_2=\left(-\bs{n}_3, 0, \bs{n}_1\right), \quad \mathbf{V}_3=\left(\bs{n}_2,-\bs{n}_1, 0\right).
  \end{equation} 
A brief derivation for the expression of $T_{\mathbf{p}} S O(3)$ is provided in Appendix \ref{sect: app}. For further details on $SO(3)$ group, we refer readers to \cite[p. 181-194]{dewitt1978analysis}. 


\subsection{Reformulated equivalent system}
In this section, we reformulate the original system with $SO(3)$ constraint  into an equivalent system in generalised ``rotational'' form, on which the gRdg scheme is constructed.
\begin{prop}
The  system \eqref{eq: gfmodelori} is equivalent to the following generalised rotational form
  \begin{equation}\label{eq: rotform}
  \begin{aligned}
 &  \frac{\partial \mathbf{p}}{\partial t}=\mathbf{p}  \mathbf{A},\\
 &  \mathbf{p}=(\bs n_1, \bs n_2, \bs n_3)\in SO(3), 
  \end{aligned}
  \end{equation}
with $\mathbf{A}\in so(3)$ taking the form
  \begin{equation}\label{eq: A}
  \mathbf{A}=
\begin{pmatrix}
  0 & \frac{1}{\chi_3}\mathscr{L}_3 \mathcal {F}_{B_i}[\mathbf{p}] & -\frac{1}{\chi_2}\mathscr{L}_2 \mathcal {F}_{B_i}[\mathbf{p}] \\
  -\frac{1}{\chi_3}\mathscr{L}_3 \mathcal {F}_{B_i}[\mathbf{p}] & 0 & \frac{1}{\chi_1}\mathscr{L}_1 \mathcal {F}_{B_i}[\mathbf{p}] \\
  \frac{1}{\chi_2}\mathscr{L}_2 \mathcal {F}_{B_i}[\mathbf{p}] & -\frac{1}{\chi_1}\mathscr{L}_1 \mathcal {F}_{B_i}[\mathbf{p}] & 0
\end{pmatrix}.
  \end{equation}

\end{prop}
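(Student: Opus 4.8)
The plan is to show the two systems are equivalent by directly translating between the scalar equations of \eqref{eq: gfmodelori} and the matrix equation \eqref{eq: rotform}. The starting point is the observation that since $\mathbf{p}(\bs x,t)\in SO(3)$ for all $t$, differentiating the constraint $\mathbf{p}^\intercal\mathbf{p}=\mathbf{I}$ in time gives $\big(\partial_t\mathbf{p}\big)^\intercal\mathbf{p}+\mathbf{p}^\intercal\partial_t\mathbf{p}=\mathbf{0}$, so $\mathbf{A}:=\mathbf{p}^\intercal\partial_t\mathbf{p}$ is automatically skew-symmetric, i.e. $\partial_t\mathbf{p}=\mathbf{p}\mathbf{A}$ with $\mathbf{A}\in so(3)$ is merely a restatement of the fact that the dynamics stays tangent to $SO(3)$. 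Thus the content of the proposition is purely the identification of the three independent entries of $\mathbf{A}$.

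First I would write $\mathbf{A}=(A_{ij})$ with $A_{ij}=\bs n_i\cdot\partial_t\bs n_j$ (the $(i,j)$ entry of $\mathbf{p}^\intercal\partial_t\mathbf{p}$, using that the columns of $\mathbf{p}$ are $\bs n_1,\bs n_2,\bs n_3$). Skew-symmetry means $A$ is determined by $A_{23}=\bs n_2\cdot\partial_t\bs n_3$, $A_{31}=\bs n_3\cdot\partial_t\bs n_1$, $A_{12}=\bs n_1\cdot\partial_t\bs n_2$. Now I invoke the orthonormality relations once more: differentiating $\bs n_2\cdot\bs n_3=0$ gives $\bs n_2\cdot\partial_t\bs n_3=-\bs n_3\cdot\partial_t\bs n_2$, and similarly for the other pairs, so equivalently $A_{23}=-\bs n_3\cdot\partial_t\bs n_2$, etc. Comparing with \eqref{eq: gfmodelori}, the first equation $\chi_1\,\partial_t\bs n_2\cdot\bs n_3+\mathscr{L}_1\mathcal{F}_{Bi}=0$ reads $\bs n_3\cdot\partial_t\bs n_2=-\tfrac{1}{\chi_1}\mathscr{L}_1\mathcal{F}_{Bi}$, hence $A_{23}=\tfrac{1}{\chi_1}\mathscr{L}_1\mathcal{F}_{Bi}$; the second gives $A_{31}=\tfrac{1}{\chi_2}\mathscr{L}_2\mathcal{F}_{Bi}$ and the third $A_{12}=\tfrac{1}{\chi_3}\mathscr{L}_3\mathcal{F}_{Bi}$. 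Placing these into the skew-symmetric matrix reproduces exactly \eqref{eq: A}, and conversely reading \eqref{eq: rotform}--\eqref{eq: A} off entry by entry recovers the three equations of \eqref{eq: gfmodelori}. The fourth line of \eqref{eq: gfmodelori}, the constraint $\mathbf{p}\in SO(3)$, is carried over verbatim.

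The only subtlety — and the step I would be most careful about — is logical direction and the role of the constraint. In the forward direction one assumes \eqref{eq: gfmodelori}, which already includes $\mathbf{p}\in SO(3)$, so the differentiated orthonormality identities are available and the computation above is immediate. In the backward direction one assumes \eqref{eq: rotform}, which also carries $\mathbf{p}\in SO(3)$ as a standing hypothesis, so again the identities hold and the three scalar equations follow. Hence no genuine analytic difficulty arises; the proof is essentially the bookkeeping of matching the three free parameters of $so(3)$ with the three rotational equations, and it suffices to lay out the correspondence cleanly and verify the sign/index pattern in \eqref{eq: A} against \eqref{eq: Li} and \eqref{eq: gfmodelori}.
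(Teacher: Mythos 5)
Your proposal is correct and follows essentially the same route as the paper: the paper expands each $\partial_t\bs n_i$ in the orthonormal basis $\{\bs n_j\}$ and uses the differentiated orthogonality relations to substitute the scalar equations of \eqref{eq: gfmodelori} for the coefficients $\partial_t\bs n_i\cdot\bs n_j$, which is exactly your identification of the entries of $\mathbf{A}=\mathbf{p}^{\intercal}\partial_t\mathbf{p}$, with the converse obtained by taking dot products in both cases. Your sign and index bookkeeping against \eqref{eq: A} is accurate, so nothing further is needed.
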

 \begin{proof}
For $\mathbf{p}=(\bs n_1, \bs n_2, \bs n_3)\in SO(3)$, since $\{\bs n_i,i=1, 2, 3\}$ is a set of the normal orthogonal basis  of $\mathbb{R}^3$. For $i,j=1, 2, 3$, the following equations hold:
\begin{subequations}\label{eq: dn}
\begin{align}
&\frac{\partial \bs n_1}{\partial t}=\big(\frac{\partial \bs n_1}{\partial t} \cdot \bs{n}_2 \big) \bs{n}_2+\big(\frac{\partial \bs n_1}{\partial t} \cdot \bs{n}_3 \big) \bs{n}_3, \label{eq: dn1}\\
&\frac{\partial \bs n_2}{\partial t}=\big(\frac{\partial \bs n_2}{\partial t} \cdot \bs{n}_1 \big) \bs{n}_1+\big(\frac{\partial \bs n_2}{\partial t} \cdot \bs{n}_3 \big) \bs{n}_3, \label{eq: dn2}\\
&\frac{\partial \bs n_3}{\partial t}=\big(\frac{\partial \bs n_3}{\partial t} \cdot \bs{n}_1 \big) \bs{n}_1+\big(\frac{\partial \bs n_3}{\partial t} \cdot \bs{n}_2 \big) \bs{n}_2, \label{eq: dn3}\\
&\frac{\partial(\bs n_i \cdot \bs n_j)}{\partial t}=\frac{\partial \bs n_i}{\partial t} \cdot {\bs n}_j+\frac{\partial \bs n_j}{\partial t} \cdot {\bs n}_i=0.  \label{eq: dn4}
\end{align}
\end{subequations}
By replacing ${\partial_t \bs n_i} \cdot {\bs n}_j$ terms on the right hand side of equations \eqref{eq: dn1}-\eqref{eq: dn3} using equations \eqref{eq: gfmodelori}, we can directly obtain \eqref{eq: rotform}. The formulation \eqref{eq: rotform} can be regarded as a generalised rotational form since for every $\mathbf{p} \in SO(3),\,\mathbf{A} \in so(3)$, because $\mathbf{p}\mathbf{A}   \in T_{\mathbf{p}}SO(3)$.

On the other hand, by taking dot product respectively between ${\partial_t \bs n_2},{\partial_t \bs n_3},{\partial_t \bs n_1}$  and $\bs n_3,\bs n_1,\bs n_2$ in equation \eqref{eq: rotform}, we can recover the system \eqref{eq: gfmodelori}. 
\end{proof}

\begin{theorem}
Given  $\mathbf{p}\in SO(3)$, the system \eqref{eq: rotform} satisfies the following energy dissipation law, 
\begin{equation}\label{eq: energylaw}
  \frac{d\mathcal{F}_{B_i}[\mathbf{p}]}{dt}=-\sum_{k=1}^3\frac{1}{\chi_k} \Big\|\mathscr{L}_k \mathcal{F}_{B i}[\mathbf{p}]\Big\|_{L^2}^2 \leq 0.
\end{equation}
Moreover, this system can be recast into an constrained gradient flow of the following form
\begin{equation}\label{eq: gf2}
\begin{aligned}
&\frac{\partial \mathbf{p}(\bs x,t)}{\partial t}=  \mathbf{p}\Big(\boldsymbol{\hat{\chi}}\odot \mathcal{S}\Big[\mathbf{p}^{\intercal}\frac{\delta \mathcal{F}_{Bi}[\mathbf{p}]}{\delta \mathbf{p}}\Big]\Big)\\
 &  \mathbf{p}=(\bs n_1, \bs n_2, \bs n_3)\in SO(3)
\end{aligned},\quad {\rm with}\;\;
\boldsymbol{\hat{\chi}}=-\begin{pmatrix}
 0 & 1/{\chi_3} & 1/{\chi_2}\\
 1/{\chi_3} & 0 & 1/{\chi_1}\\
 1/{\chi_2}& 1/{\chi_1} & 0
\end{pmatrix},\;\; \mathcal{S}[\mathbf{a}]=\mathbf{a}-\mathbf{a}^{\intercal},
\end{equation}
which satisfies an equivalent energy dissipative law
\begin{equation}\label{eq: eqlaw}
\frac{d \mathcal{F}_{Bi}[\mathbf{p}]}{d t}=\frac{1}{2}\int_{\Omega}\Big(\mathcal{S}\Big[\mathbf{p}^{\intercal}\frac{\delta \mathcal{F}_{Bi}[\mathbf{p}]}{\delta \mathbf{p}}\Big]\Big) \cdot \Big(\boldsymbol{\hat{\chi}}\odot (\mathcal{S}\Big[\mathbf{p}^{\intercal}\frac{\delta \mathcal{F}_{Bi}[\mathbf{p}]}{\delta \mathbf{p}}\Big]\Big)dV\leq 0.
\end{equation}
Here, $\odot$ is the Hadamard product.
\end{theorem}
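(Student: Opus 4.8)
The plan is to verify the three assertions in order by direct manipulation, exploiting the skew-symmetric structure of \re{eq: rotform}. Throughout I assume $\chi_k>0$ and boundary conditions under which the three $\gamma_i$ surface terms in \re{eq: density} vanish, so that the variational derivative $\frac{\delta\mathcal{F}_{Bi}}{\delta\mathbf{p}}=\big(\frac{\delta\mathcal{F}_{Bi}}{\delta\bs n_1},\frac{\delta\mathcal{F}_{Bi}}{\delta\bs n_2},\frac{\delta\mathcal{F}_{Bi}}{\delta\bs n_3}\big)$ is the one entering \re{eq: Li}.

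For the dissipation law \re{eq: energylaw}, along a solution $\mathbf{p}(\bs x,t)\in SO(3)$ the chain rule gives $\frac{d}{dt}\mathcal{F}_{Bi}[\mathbf{p}]=\int\frac{\delta\mathcal{F}_{Bi}}{\delta\mathbf{p}}:\partial_t\mathbf{p}\,d\bs x=\sum_{i=1}^{3}\int\frac{\delta\mathcal{F}_{Bi}}{\delta\bs n_i}\cdot\partial_t\bs n_i\,d\bs x$. Reading the columns of $\mathbf{A}$ in \re{eq: A}, equation \re{eq: rotform} is exactly $\partial_t\bs n_1=-\frac{1}{\chi_3}(\mathscr{L}_3\mathcal{F}_{Bi})\bs n_2+\frac{1}{\chi_2}(\mathscr{L}_2\mathcal{F}_{Bi})\bs n_3$ together with its two cyclic analogues. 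Substituting and grouping the terms carrying the factor $\frac{1}{\chi_1}\mathscr{L}_1\mathcal{F}_{Bi}$ yields $\frac{1}{\chi_1}(\mathscr{L}_1\mathcal{F}_{Bi})\big(\bs n_2\cdot\frac{\delta\mathcal{F}_{Bi}}{\delta\bs n_3}-\bs n_3\cdot\frac{\delta\mathcal{F}_{Bi}}{\delta\bs n_2}\big)=-\frac{1}{\chi_1}(\mathscr{L}_1\mathcal{F}_{Bi})^2$ directly from the definition \re{eq: Li} of $\mathscr{L}_1$; the factors $\frac{1}{\chi_2}\mathscr{L}_2\mathcal{F}_{Bi}$ and $\frac{1}{\chi_3}\mathscr{L}_3\mathcal{F}_{Bi}$ produce the remaining two squares. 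Integrating gives \re{eq: energylaw}, and the sign is immediate since $\chi_k>0$.

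Next I would match \re{eq: rotform} with the constrained gradient flow \re{eq: gf2}. Put $\mathbf{M}:=\mathbf{p}^{\intercal}\frac{\delta\mathcal{F}_{Bi}}{\delta\mathbf{p}}$, whose $(i,j)$ entry is $\bs n_i\cdot\frac{\delta\mathcal{F}_{Bi}}{\delta\bs n_j}$. Then $\mathcal{S}[\mathbf{M}]=\mathbf{M}-\mathbf{M}^{\intercal}$ has $(1,2)$ entry $\bs n_1\cdot\frac{\delta\mathcal{F}_{Bi}}{\delta\bs n_2}-\bs n_2\cdot\frac{\delta\mathcal{F}_{Bi}}{\delta\bs n_1}=-\mathscr{L}_3\mathcal{F}_{Bi}$, $(1,3)$ entry $\mathscr{L}_2\mathcal{F}_{Bi}$, and $(2,3)$ entry $-\mathscr{L}_1\mathcal{F}_{Bi}$, all straight from \re{eq: Li}. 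Taking the Hadamard product with $\boldsymbol{\hat{\chi}}$ then reproduces $\mathbf{A}$ of \re{eq: A} entry by entry; both matrices being skew-symmetric, the lower triangle and the vanishing diagonal follow automatically. Hence $\mathbf{p}\mathbf{A}=\mathbf{p}\big(\boldsymbol{\hat{\chi}}\odot\mathcal{S}[\mathbf{M}]\big)$, so \re{eq: rotform} is literally \re{eq: gf2}, and since $\mathbf{A}\in so(3)$ we have $\mathbf{p}\mathbf{A}\in T_{\mathbf{p}}SO(3)$, keeping the flow on $SO(3)$.

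Finally, for \re{eq: eqlaw} I would repeat the chain rule in trace form: $\frac{d}{dt}\mathcal{F}_{Bi}[\mathbf{p}]=\int\frac{\delta\mathcal{F}_{Bi}}{\delta\mathbf{p}}:\mathbf{p}\big(\boldsymbol{\hat{\chi}}\odot\mathcal{S}[\mathbf{M}]\big)\,dV=\int\mathbf{M}:\big(\boldsymbol{\hat{\chi}}\odot\mathcal{S}[\mathbf{M}]\big)\,dV$, using $\mathrm{tr}\big((\frac{\delta\mathcal{F}_{Bi}}{\delta\mathbf{p}})^{\intercal}\mathbf{p}\,\mathbf{B}\big)=\mathrm{tr}(\mathbf{M}^{\intercal}\mathbf{B})$. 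Because $\boldsymbol{\hat{\chi}}$ is symmetric, $\boldsymbol{\hat{\chi}}\odot\mathcal{S}[\mathbf{M}]$ is skew-symmetric, so in the Frobenius pairing only the skew part $\frac{1}{2}\mathcal{S}[\mathbf{M}]$ of $\mathbf{M}$ contributes, giving $\frac{d}{dt}\mathcal{F}_{Bi}[\mathbf{p}]=\frac{1}{2}\int_{\Omega}\mathcal{S}[\mathbf{M}]:\big(\boldsymbol{\hat{\chi}}\odot\mathcal{S}[\mathbf{M}]\big)\,dV$, which is \re{eq: eqlaw}; the sign follows because every entry of $\boldsymbol{\hat{\chi}}$ is nonpositive, and by the entry identifications above this expression equals $-\sum_{k=1}^{3}\frac{1}{\chi_k}\|\mathscr{L}_k\mathcal{F}_{Bi}\|_{L^2}^2$, reconciling it with \re{eq: energylaw}. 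The whole argument is essentially bookkeeping; the only point deserving care is the legitimacy of the chain-rule step, which requires sufficient space-time regularity of $\mathbf{p}$ and the boundary conditions that kill the $\gamma_i$ surface terms, together with the observation that since $\partial_t\mathbf{p}$ is tangential only the skew part of $\mathbf{p}^{\intercal}\frac{\delta\mathcal{F}_{Bi}}{\delta\mathbf{p}}$ ever enters — which is exactly why the operators $\mathscr{L}_k$ and $\mathcal{S}[\cdot]$ appear and why the computation closes.
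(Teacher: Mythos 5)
Your argument is correct, and parts one and three follow essentially the same line as the paper: take the $L^2$ pairing of \re{eq: rotform} (resp.\ \re{eq: gf2}) with $\delta\mathcal{F}_{Bi}/\delta\mathbf{p}$, and exploit skew-symmetry to collect either the squares $-\frac{1}{\chi_k}(\mathscr{L}_k\mathcal{F}_{Bi})^2$ or the $\frac12\mathcal{S}[\cdot]$ symmetrization; your observation that a symmetric matrix pairs to zero against a skew one is just a tidier phrasing of the paper's explicit transpose manipulation in \re{eq: enderi}. Where you genuinely diverge is the middle assertion. The paper derives \re{eq: gf2} by first projecting $\delta\mathcal{F}_{Bi}/\delta\mathbf{p}$ onto $T_{\mathbf{p}}SO(3)$, expanding in the basis $\{\mathbf{V}_i\}$ of \re{eq: Tpbasis}, computing the coefficients \re{eq: ci}, and obtaining the projection identity \re{eq: condf}, which is then inserted into \re{eq: gfmodel}. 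You instead verify directly, entry by entry, that $\boldsymbol{\hat{\chi}}\odot\mathcal{S}\big[\mathbf{p}^{\intercal}\delta\mathcal{F}_{Bi}/\delta\mathbf{p}\big]$ coincides with the matrix $\mathbf{A}$ of \re{eq: A}, using only the definitions \re{eq: Li}; this is shorter and more elementary, and it establishes the literal identity between \re{eq: rotform} and \re{eq: gf2} without any tangent-space machinery. What the paper's route buys in exchange is the interpretation: identity \re{eq: condf} exhibits $\frac12\mathbf{p}\,\mathcal{S}[\mathbf{p}^{\intercal}\delta\mathcal{F}_{Bi}/\delta\mathbf{p}]$ as the constrained variational derivative on $SO(3)$, which is what justifies calling \re{eq: gf2} a constrained \emph{gradient flow} rather than merely an equivalent rewriting; if you want your proof to support that reading too, you should append a remark to that effect. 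Your closing caveats (regularity for the chain rule, boundary conditions killing the $\gamma_i$ surface terms) are appropriate and implicit in the paper as well.
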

\begin{proof}
Let us first prove the dissipative law \eqref{eq: energylaw}. By taking inner product between equation \eqref{eq: rotform} and $ {\delta \mathcal{F}[\mathbf{p}]}/{\delta \mathbf{p}}$, one can obtain that
\begin{equation}\label{eq: ener1}
 \frac{d \mathcal{F}_{B_i}[\mathbf{p}]}{d t}=\int_{\Omega} \frac{\delta \mathcal{F}_{B_i}[\mathbf{p}]}{\delta \mathbf{p}} \cdot \frac{\partial \mathbf{p}(\boldsymbol{x}, \boldsymbol{t})}{\partial t} d V=\int_{\Omega} \frac{\delta \mathcal{F}_{B_i}[\mathbf{p}]}{\delta \mathbf{p}} \cdot (\mathbf{p}  \mathbf{A})\, d V.
\end{equation}
With the help of the expression of $\mathbf{A}$ in equation \eqref{eq: A}, equation \eqref{eq: ener1} can be expanded as  
\begin{equation*}
  \begin{aligned}
	 \frac{d \mathcal{F}_{B_i}[\mathbf{p}]}{d t}  & = \int_{\Omega}\Big\{ \frac{\delta \mathcal{F}_{B_i}[\mathbf{p}]}{\delta \boldsymbol{n}_1} \cdot\Big(\frac{1}{\chi_3}\big(-\frac{\delta \mathcal{F}_{B_i}[\mathbf{p}]}{\delta \boldsymbol{n}_1} \cdot \boldsymbol{n}_2+\frac{\delta \mathcal{F}_{B_i}[\mathbf{p}]}{\delta \boldsymbol{n}_2} \cdot \boldsymbol{n}_1\big) \boldsymbol{n}_2+\frac{1}{\chi_2}\big(\frac{\delta \mathcal{F}_{B_i}[\mathbf{p}]}{\delta \boldsymbol{n}_3} \cdot \boldsymbol{n}_1-\frac{\delta \mathcal{F}_{B_i}[\mathbf{p}]}{\delta \boldsymbol{n}_1} \cdot \boldsymbol{n}_3\big) \boldsymbol{n}_3\Big) \\
	   &\qquad+\frac{\delta \mathcal{F}_{B_i}[\mathbf{p}]}{\delta \boldsymbol{n}_2} \cdot\Big(\frac{1}{\chi_3}\big(\frac{\delta \mathcal{F}_{B_i}[\mathbf{p}]}{\delta \boldsymbol{n}_1} \cdot \boldsymbol{n}_2-\frac{\delta \mathcal{F}_{B_i}[\mathbf{p}]}{\delta \boldsymbol{n}_2} \cdot \boldsymbol{n}_1\big) \boldsymbol{n}_1+\frac{1}{\chi_1}\big(-\frac{\delta \mathcal{F}_{B_i}[\mathbf{p}]}{\delta \boldsymbol{n}_2} \cdot \boldsymbol{n}_3+\frac{\delta \mathcal{F}_{B_i}[\mathbf{p}]}{\delta \boldsymbol{n}_3} \cdot \boldsymbol{n}_2\big) \boldsymbol{n}_3\Big) \\
	  &\qquad +\frac{\delta \mathcal{F}_{B_i}[\mathbf{p}]}{\delta \boldsymbol{n}_2} \cdot\Big(\frac{1}{\chi_2}\big(\frac{\delta \mathcal{F}_{B_i}[\mathbf{p}]}{\delta \boldsymbol{n}_3} \cdot \boldsymbol{n}_1-\frac{\delta \mathcal{F}_{B_i}[\mathbf{p}]}{\delta \boldsymbol{n}_1} \cdot \boldsymbol{n}_3\big) \boldsymbol{n}_1+\frac{1}{\chi_1}\big(-\frac{\delta \mathcal{F}_{B_i}[\mathbf{p}]}{\delta \boldsymbol{n}_2} \cdot \boldsymbol{n}_3+\frac{\delta \mathcal{F}_{B_i}[\mathbf{p}]}{\delta \boldsymbol{n}_3} \cdot \boldsymbol{n}_2\big) \boldsymbol{n}_3\Big) \Big\}d V.
  \end{aligned}
\end{equation*}
By collecting terms in the above equation, one obtain
\begin{equation}\label{eq: FBi}
\begin{aligned}
\frac{d \mathcal{F}_{B_i}[\mathbf{p}]}{d t}  &=- \int_{\Omega}\left\{\frac{1}{\chi_1}\left(\frac{\delta \mathcal{F}_{B_i}[\mathbf{p}]}{\delta n_2} \cdot \boldsymbol{n}_3-\frac{\delta \mathcal{F}_{B_i}[\mathbf{p}]}{\delta \boldsymbol{n}_3} \cdot \boldsymbol{n}_2\right)^2+\frac{1}{\chi_2}\left(\frac{\delta \mathcal{F}_{B_i}[\mathbf{p}]}{\delta \boldsymbol{n}_3} \cdot \boldsymbol{n}_1-\frac{\delta \mathcal{F}_{B_i}[\mathbf{p}]}{\delta \boldsymbol{n}_1} \cdot \boldsymbol{n}_3\right)^2\right. \\
	  &\qquad \qquad \;\;\; \left.+\frac{1}{\chi_3}\left(\frac{\delta \mathcal{F}_{B_i}[\mathbf{p}]}{\delta \boldsymbol{n}_1} \cdot \boldsymbol{n}_2-\frac{\delta \mathcal{F}_{B_i}[\mathbf{p}]}{\delta \boldsymbol{n}_2} \cdot \boldsymbol{n}_1\right)^2\right\} d V. \\
\end{aligned}
\end{equation}
Aware of the expressions of $\mathscr{L}_i,\, i=1,2,3$ in equation \eqref{eq: Li}, we have 
\begin{equation*}
\frac{d \mathcal{F}_{B_i}[\mathbf{p}]}{d t}=- \sum_{k=1}^3\dfrac{1}{\chi_k}\left\|\mathscr{L}_k \mathcal{F}_{B i}\right\|_{L^2}^2\leq 0.
\end{equation*}

Next, we show that system \eqref{eq: rotform} is indeed a constrained gradient flow of the form \eqref{eq: gf2}.
Considering the constrained variational derivation $ {\delta \mathcal{F}_{Bi}[\mathbf{p}]}/{\delta \mathbf{p}}\big|_{\mathbf{p} \in SO(3)}$,  it is equivalent to projecting ${\delta \mathcal{F}_{Bi}[\mathbf{p}]}/{\delta \mathbf{p}}$ onto the tangential plane of $SO(3)$ at $\mathbf{p}$.
Thus, it can be expressed by a linear combination of the complete orthogonal basis $\{\mathbf{V}_i \}_{i=1}^3$ of $T_{\mathbf{p}} S O(3)$ in equation \eqref{eq: Tpbasis}
  \begin{equation}\label{eq: expand}
\frac{\delta \mathcal{F}_{Bi}[\mathbf{p}]}{\delta \mathbf{p}}\bigg|_{\mathbf{p} \in SO(3)}=\sum_{i=1}^3  c_i \mathbf{V}_i,
  \end{equation}
  where $c_1, c_2, c_3$ are the expansion coefficients. One can take dot product between the above equation and  $\mathbf{V}_i$ to obtain
  \begin{equation}\label{eq: ci}
  c_i=\frac{1}{2}\frac{\delta \mathcal{F}_{Bi}[\mathbf{p}]}{\delta \mathbf{p}} \cdot \mathbf{V}_i,\;\; i=1,2,3.
  \end{equation}
With the help of  equation \eqref{eq: ci} and  equation \eqref{eq: expand} and a direct calculation, we obtain
\begin{equation}\label{eq: condf}
\frac{\delta \mathcal{F}_{Bi}[\mathbf{p}]}{\delta \mathbf{p}}\bigg|_{\mathbf{p} \in SO(3)}=\frac{1}{2}\mathbf{p}\mathcal{S}\Big[\mathbf{p}^{\intercal}\frac{\delta \mathcal{F}_{Bi}[\mathbf{p}]}{\delta \mathbf{p}}\Big].
 \end{equation}
Inserting the above equation into
\begin{equation}\label{eq: gfmodel}
\frac{\partial \mathbf{p}}{\partial t} =2\mathbf{p}\Big(\boldsymbol{\hat{\chi}}\odot\mathbf{p}^{\intercal}\frac{\delta \mathcal{F}_{Bi}[\mathbf{p}]}{\delta \mathbf{p}}\bigg|_{\mathbf{p} \in SO(3)}\Big)
\end{equation}
we arrive at the desired constrained gradient flow \eqref{eq: gf2}.

Finally, we turn to the equivalent dissipative law \eqref{eq: eqlaw}.  By taking the inner product between the variational derivation ${\delta \mathcal{F}_{Bi}[\mathbf{p}]}/{\delta \mathbf{p}}$ with equation \eqref{eq: gf2}, we obtain
\begin{equation}\label{eq: enderi}
\begin{aligned}
\frac{d \mathcal{F}_{Bi}[\mathbf{p}]}{d t}=&\int_{\Omega} \frac{\delta \mathcal{F}_{Bi}[\mathbf{p}]}{\delta \mathbf{p}} \cdot\Big(\mathbf{p}\big(\boldsymbol{\hat{\chi}}\odot (\mathcal{S}\big[\mathbf{p}^{\intercal}\frac{\delta \mathcal{F}_{Bi}[\mathbf{p}]}{\delta \mathbf{p}}\big]\big)\Big)dV\\
=&\int_{\Omega}\Big(\mathbf{p}^{\intercal}\frac{\delta \mathcal{F}_{Bi}[\mathbf{p}]}{\delta \mathbf{p}}\Big) \cdot \Big(\boldsymbol{\hat{\chi}}\odot (\mathcal{S}\big[\mathbf{p}^{\intercal}\frac{\delta \mathcal{F}_{Bi}[\mathbf{p}]}{\delta \mathbf{p}}\big]\Big)dV\\
=&\frac{1}{2}\int_{\Omega}\Big(\mathbf{p}^{\intercal}\frac{\delta \mathcal{F}_{Bi}[\mathbf{p}]}{\delta \mathbf{p}}\Big) \cdot \Big(\boldsymbol{\hat{\chi}}\odot (\mathcal{S}\big[\mathbf{p}^{\intercal}\frac{\delta \mathcal{F}_{Bi}[\mathbf{p}]}{\delta \mathbf{p}}\big]\Big)\\
&+\Big(\mathbf{p}^{\intercal}\frac{\delta \mathcal{F}_{Bi}[\mathbf{p}]}{\delta \mathbf{p}}\Big)^{\intercal} \cdot \Big(\boldsymbol{\hat{\chi}}\odot (\mathcal{S}\big[\mathbf{p}^{\intercal}\frac{\delta \mathcal{F}_{Bi}[\mathbf{p}]}{\delta \mathbf{p}}\big]\Big)^{\intercal}dV\\
=&\frac{1}{2}\int_{\Omega}\Big(\mathbf{p}^{\intercal}\frac{\delta \mathcal{F}_{Bi}[\mathbf{p}]}{\delta \mathbf{p}}\Big) \cdot \Big(\boldsymbol{\hat{\chi}}\odot (\mathcal{S}\big[\mathbf{p}^{\intercal}\frac{\delta \mathcal{F}_{Bi}[\mathbf{p}]}{\delta \mathbf{p}}\big]\Big)\\
&-\Big(\mathbf{p}^{\intercal}\frac{\delta \mathcal{F}_{Bi}[\mathbf{p}]}{\delta \mathbf{p}}\Big)^{\intercal} \cdot \Big(\boldsymbol{\hat{\chi}}\odot (\mathcal{S}\big[\mathbf{p}^{\intercal}\frac{\delta \mathcal{F}_{Bi}[\mathbf{p}]}{\delta \mathbf{p}}\big]\Big)dV\\
=&\frac{1}{2}\int_{\Omega}\Big(\mathcal{S}\big[\mathbf{p}^{\intercal}\frac{\delta \mathcal{F}_{Bi}[\mathbf{p}]}{\delta \mathbf{p}}\big]\Big) \cdot \Big(\boldsymbol{\hat{\chi}}\odot (\mathcal{S}\big[\mathbf{p}^{\intercal}\frac{\delta \mathcal{F}_{Bi}[\mathbf{p}]}{\delta \mathbf{p}}\big]\Big)dV.
\end{aligned}
\end{equation}
Since $\hat{\chi}$ is symmetric and every entry of it is negative, it is direct to verify that $\mathbf{A} \cdot \hat{\chi}\odot \mathbf{A}\leq 0 $ for arbitrary $\mathbf{A}$, thus right-hand side of equation \eqref{eq: enderi} is guaranteed to be non-positive, which ends the proof. 
\end{proof}

\subsection{An energy-stable and $SO(3)$-preserving scheme}
With the help of the generalised rotational form \eqref{eq: rotform}, we shall propose a second-order  unconditionally energy-stable and $SO(3)$-preserving scheme. Let $\tau$ be the time step, and the  superscript $n$ is used to represent variables at $t=n\tau$.

The gRdg scheme takes the form
\begin{equation}\label{eq: rotform2}
\frac{\mathbf{p}^{n+1}-\mathbf{p}^n}{\tau}=\mathbf{p}^{n+\frac{1}{2}} \mathbf{A}^{n+\frac{1}{2}},
\end{equation}
with $\mathbf{p}^{n+\frac{1}{2}}:={(\mathbf{p}^{n+1}+\mathbf{p}^{n})}/{2}$ and 
\begin{equation}\label{eq: disA}
A^{n+\frac{1}{2}}=\begin{pmatrix}
0 & \frac{1}{\chi_3}(\mathscr{L}_3 \mathcal {F}_{B_i}[\mathbf{p}])^{n+\frac{1}{2}} & -\frac{1}{\chi_2}(\mathscr{L}_2 \mathcal {F}_{B_i}[\mathbf{p}])^{n+\frac{1}{2}} \\
-\frac{1}{\chi_3}(\mathscr{L}_3 \mathcal {F}_{B_i}[\mathbf{p}])^{n+\frac{1}{2}} & 0 & \frac{1}{\chi_1}(\mathscr{L}_1 \mathcal {F}_{B_i}[\mathbf{p}])^{n+\frac{1}{2}} \\
\frac{1}{\chi_2}(\mathscr{L}_2 \mathcal {F}_{B_i}[\mathbf{p}])^{n+\frac{1}{2}} & -\frac{1}{\chi_1}(\mathscr{L}_1 \mathcal {F}_{B_i}[\mathbf{p}])^{n+\frac{1}{2}} & 0
\end{pmatrix}.
\end{equation} 
Let us first show that the above scheme is $SO(3)$-preserving.
From equation \eqref{eq: rotform2}, we have 
\begin{equation}\label{eq: peq}
 \mathbf{p}^{n+1}\Big(\mathbf{I}-\frac{\tau}{2}\mathbf{A}^{n+\frac{1}{2}}\Big)=\mathbf{p}^{n}\Big(\mathbf{I}+\frac{\tau}{2}\mathbf{A}^{n+\frac{1}{2}}\Big).
\end{equation}
For $\mathbf{p}^n \in SO(3)$, since $\mathbf{A}^{n+\frac{1}{2}}$ is $3\times 3$ and skew-symmetric, its eigenvalues are $\lambda,-\lambda,0$ for $\lambda$ being pure imaginary. Thus, it is direct to obtain that $\mathbf{I}\pm ({\tau}/2)\mathbf{A}^{n+\frac{1}{2}}$ is invertible and ${\rm det} \big(\mathbf{I}\pm ({\tau}/{2})\mathbf{A}^{n+\frac{1}{2}} \big) > 0.$
Taking right multiplication of $\big(\mathbf{I}-({\tau}/{2}) \mathbf{A}^{n+\frac{1}{2}}\big)^{-1}$ on equation \eqref{eq: peq}, one obtains 
\begin{equation}\label{eq: peq2}
  \mathbf{p}^{n+1}=\mathbf{p}^{n}\Big(\mathbf{I}+\frac{\tau}{2}\mathbf{A}^{n+\frac{1}{2}} \Big)\Big(\mathbf{I}-\frac{\tau}{2} \mathbf{A}^{n+\frac{1}{2}}\Big)^{-1}.
\end{equation}
Again, using the fact that $\mathbf{A}^{n+\frac{1}{2}}$ is skew-symmetric, one can take transpose of equation \eqref{eq: peq2} to obtain 
  \begin{equation}\label{eq: ptrans}
  \begin{aligned}
  (\mathbf{p}^{n+1})^{\intercal}=&\Big[\mathbf{p}^{n}\Big(\mathbf{I}+\frac{\tau}{2}\mathbf{A}^{n+\frac{1}{2}} \Big)\Big(\mathbf{I}-\frac{\tau}{2}\mathbf{A}^{n+\frac{1}{2}}\Big)^{-1} \Big]^{\intercal}\\
  =&\Big(\mathbf{I}+\frac{\tau}{2}\mathbf{A}^{n+\frac{1}{2}}\Big)^{-1} \Big(\mathbf{I}-\frac{\tau}{2}\mathbf{A}^{n+\frac{1}{2}}\Big)(\mathbf{p}^{n})^{\intercal}.
  \end{aligned}
  \end{equation}
 Combining equations \eqref{eq: peq2} and \eqref{eq: ptrans}, we arrive at 
 \begin{equation}\label{eq: pptrans}
 \begin{aligned}
  (\mathbf{p}^{n+1})^{\intercal}\mathbf{p}^{n+1}&=\Big(\mathbf{I}+\frac{\tau}{2}\mathbf{A}^{n+\frac{1}{2}}\Big)^{-1} \Big(\mathbf{I}-\frac{\tau}{2}\mathbf{A}^{n+\frac{1}{2}}\Big)(\mathbf{p}^{n})^{\intercal} \mathbf{p}^{n}\Big(\mathbf{I}+\frac{\tau}{2}\mathbf{A}^{n+\frac{1}{2}} \Big)\Big(\mathbf{I}-\frac{\tau}{2} \mathbf{A}^{n+\frac{1}{2}}\Big)^{-1}\\
  &=\Big(\mathbf{I}+\frac{\tau}{2}\mathbf{A}^{n+\frac{1}{2}}\Big)^{-1} \Big(\mathbf{I}+\frac{\tau}{2}\mathbf{A}^{n+\frac{1}{2}}\Big)\Big(\mathbf{I}-\frac{\tau}{2}\mathbf{A}^{n+\frac{1}{2}} \Big)\Big(\mathbf{I}-\frac{\tau}{2} \mathbf{A}^{n+\frac{1}{2}}\Big)^{-1}=\mathbf{I},
 \end{aligned}
 \end{equation} 
 where the fact that  $\mathbf{p}^{n} \in SO(3)$ and the identity $(\mathbf{I}-\mathbf{A})(\mathbf{I}+\mathbf{A})=\big(\mathbf{I}+\mathbf{A})(\mathbf{I}-\mathbf{A} \big)$ have been used.  Moreover, from the above equation, we have $\text{det}(\mathbf{p}^{n+1})=\pm 1$. Together with equation \eqref{eq: peq2} and $\text{det}(\mathbf{p}^{n})=1$, we can deduce $\text{det}(\mathbf{p}^{n+1})=1$. Thus, we conclude $\mathbf{p}^{n+1}\in SO(3)$.

Now we can focus on the design of $\mathbf{A}^{n+1/2}$ such that the proposed gRdg scheme satisfies the discrete energy dissipation law. The $(\mathscr{L}_i \mathcal {F}_{B_i}[\mathbf{p}])^{n+\frac{1}{2}},\; i=1,2,3$ terms in equation \eqref{eq: disA} are discretized by
\begin{equation}
(\mathscr{L}_i \mathcal {F}_{B_i}[\mathbf{p}])^{n+\frac{1}{2}}=\mathbf{V}_i^{n+\frac{1}{2}}\cdot \left.D_{\mathcal{F}}(\mathbf{p})\right|^{n+\frac{1}{2}},
\end{equation}
with $\mathbf{V}_i^{n+\frac{1}{2}}=({\mathbf{V}_i^{n}+\mathbf{V}_i^{n+1}})/{2} $ and $D_{\mathcal{F}}(\mathbf{p})\Big|^{n+\frac{1}{2}}$ is a second-order approximation of ${\delta \mathcal{F}_{B_i}[\mathbf{p}]}/{\delta \mathbf{p}}$ satisfying the energy difference relation
\begin{equation}\label{eq: DDprop}
\int_{\Omega} D_{\mathcal{F}}(\mathbf{p})\Big|^{n+\frac{1}{2}}\cdot (\mathbf{p}^{n+1}-\mathbf{p}^n) dV=\mathcal{F}_{B_i}[ \mathbf{p}^{n+1}]-\mathcal{F}_{B_i}[ \mathbf{p}^{n}].
\end{equation}

One can take the inner product between equation \eqref{eq: rotform2} and  $D_{\mathcal{F}}(\mathbf{p})\Big|^{n+\frac{1}{2}}$ and utilizing the energy difference relation \eqref{eq: DDprop} to obtain the desired energy dissipation law by exactly the same procedure as the continuous case. Hence, it suffices to construct the discrete gradient satisfying the discrete difference relation \eqref{eq: DDprop}.

\subsection{The biaxial discrete gradient}
In what follows, we shall propose a second-order discrete gradient approximation of ${\delta \mathcal{F}_{B_i}[\mathbf{p}]}/{\delta \mathbf{p}}$, which satisfies the energy difference relation \eqref{eq: DDprop}.
To fix the idea, we assume periodic boundary conditions, and will explain how to deal with non-periodic boundary conditions afterwards. 
In this case, the three $\gamma$ terms vanish after integrated, so that $\gamma_i$ can be arbitrarily chosen. 
To ensure that $\mathcal{F}_{B i}[\mathbf{p}]$ is bounded from below, the twelve elastic coefficients $K_i$ are assumed positive.
We begin with reformulating the biaxial energy density function into an equivalent form to fully utilize its structure. 
\begin{prop}
The biaxial elasticity $f_{B i}(\mathbf{p}, \nabla \mathbf{p})$ can be reformulated as 
\begin{equation}\label{eq: density2}
f_{B i}(\mathbf{p}, \nabla \mathbf{p})=\frac{1}{2} \sum_{i=1}^3 \gamma_i\left|\nabla \bs{n}_i\right|^2+W(\mathbf{p}, \nabla \mathbf{p}),
\end{equation}
where 
\begin{align}
W(\mathbf{p}, \nabla \mathbf{p})=&\frac{1}{2}\Big(\sum_{i=1}^3 k_i\left(\nabla \cdot \bs{n}_i\right)^2+\sum_{i, j=1}^3 k_{i j}\left(\bs{n}_i \cdot \nabla \times \bs{n}_j\right)^2\Big),
\end{align}
and the coefficients are given by 
\begin{align}\label{co:density}
  &\gamma_1  =\min \left\{K_1, K_4, K_7, K_{10}\right\}>0, \gamma_2=\min \left\{K_2, K_5, K_8, K_{11}\right\}>0 \notag\\
  &\gamma_3  =\min \left\{K_3, K_6, K_9, K_{12}\right\}>0\notag\notag\\
  &k_1=K_1-\gamma_1, \quad k_2=K_2-\gamma_2, \quad k_3=K_3-\gamma_3\notag\notag\notag\\
  & k_{11}=K_4-\gamma_1, \quad k_{22}=K_5-\gamma_2, \quad k_{33}=K_6-\gamma_3, \notag\notag\notag\notag\\
   &k_{31}=K_7-\gamma_1, \quad k_{12}=K_8-\gamma_2, \quad k_{23}=K_9-\gamma_3, \notag\notag\notag\notag\notag\\
  & k_{21}=K_{10}-\gamma_1, \quad k_{32}=K_{11}-\gamma_2, \quad k_{13}=K_{12}-\gamma_3.\notag\\
  &k_{i} \geq 0,k_{ij} \geq 0 \quad \quad (i,j =1,2,3).
  \end{align}
\end{prop}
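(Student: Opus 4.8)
The plan is to verify directly that the claimed decomposition holds identically, by substituting the definitions of $\gamma_i$, $k_i$ and $k_{ij}$ from \eqref{co:density} into the right-hand side of \eqref{eq: density2} and checking it reproduces \eqref{eq: density}. The key algebraic fact I would use is the classical pointwise identity for a unit vector field: for any vector field $\bs n$ with $|\bs n|=1$, one has
\begin{equation*}
|\nabla \bs n|^2 = (\nabla\cdot\bs n)^2 + |\bs n\cdot\nabla\times\bs n|^2 + \sum_{j\neq i}(\bs n_j\cdot\nabla\times\bs n)^2 + \nabla\cdot\big[(\bs n\cdot\nabla)\bs n - (\nabla\cdot\bs n)\bs n\big],
\end{equation*}
where $\{\bs n_i\}$ is the orthonormal frame containing $\bs n=\bs n_i$. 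In other words, when $\mathbf{p}=(\bs n_1,\bs n_2,\bs n_3)\in SO(3)$, the quantity $|\nabla\bs n_i|^2$ splits exactly into the sum of the squared divergence, the squared ``twist'' $(\bs n_i\cdot\nabla\times\bs n_i)^2$, the two squared ``bend/splay-mixed'' components $(\bs n_j\cdot\nabla\times\bs n_i)^2$ for $j\neq i$, and the null-Lagrangian divergence term that is precisely the $\gamma_i$-integrand in \eqref{eq: density}. I would first establish this identity (it follows from expanding $|\nabla\bs n_i|^2=\sum_{a,b}(\partial_a n_{i,b})^2$, resolving each $\partial_a\bs n_i$ in the frame basis, and recognizing $\nabla\cdot\bs n_i$, $\nabla\times\bs n_i$ in terms of the connection coefficients $\bs n_j\cdot\partial_a\bs n_i$).

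Granting that identity, the computation is bookkeeping: I would write
\begin{equation*}
\tfrac12\sum_{i=1}^3\gamma_i|\nabla\bs n_i|^2 = \tfrac12\sum_{i=1}^3\gamma_i\Big[(\nabla\cdot\bs n_i)^2 + \textstyle\sum_{j=1}^3(\bs n_j\cdot\nabla\times\bs n_i)^2\Big] + \tfrac12\sum_{i=1}^3\gamma_i\,\nabla\cdot\big[(\bs n_i\cdot\nabla)\bs n_i-(\nabla\cdot\bs n_i)\bs n_i\big],
\end{equation*}
using that for a unit vector $\bs n_i\cdot(\nabla\times\bs n_i)$ together with the two cross terms exhaust the curl (since $\bs n_i\cdot\nabla\times\bs n_i$, $\bs n_j\cdot\nabla\times\bs n_i$, $\bs n_k\cdot\nabla\times\bs n_i$ are the three frame components of $\nabla\times\bs n_i$, but one of them, $\bs n_i\cdot\nabla\times\bs n_i$, I keep separate while the identity above accounts for the count correctly). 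The last sum is exactly the collection of $\gamma_i$-terms in \eqref{eq: density}. Then $W(\mathbf{p},\nabla\mathbf{p})=f_{Bi}-\tfrac12\sum\gamma_i|\nabla\bs n_i|^2$ has, for each squared term in \eqref{eq: density}, a coefficient equal to the original $K$ minus the relevant $\gamma$: the splay terms give $k_i=K_i-\gamma_i$ on $(\nabla\cdot\bs n_i)^2$, and the nine curl-type terms give $k_{ij}=K_{(\cdot)}-\gamma_j$ on $(\bs n_i\cdot\nabla\times\bs n_j)^2$, matching \eqref{co:density} term by term. I would present this as a short table matching $K_1,\dots,K_{12}$ to the twelve squared expressions and their reduced coefficients.

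Finally I would check nonnegativity of all reduced coefficients. Here the point is that the definition $\gamma_1=\min\{K_1,K_4,K_7,K_{10}\}$ subtracts from each of the four squared terms whose original coefficient appears in that minimum — namely $(\nabla\cdot\bs n_1)^2$, $(\bs n_1\cdot\nabla\times\bs n_1)^2$, $(\bs n_3\cdot\nabla\times\bs n_1)^2$, $(\bs n_2\cdot\nabla\times\bs n_1)^2$ — exactly because $|\nabla\bs n_1|^2$ contains each of those four squares with coefficient one. Since $\gamma_1\le K_1,K_4,K_7,K_{10}$, all of $k_1,k_{11},k_{31},k_{21}$ are $\ge 0$; likewise for the $\gamma_2$ and $\gamma_3$ groups. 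Positivity of the $\gamma_i$ themselves follows from the standing assumption that all $K_i>0$. The main obstacle is purely organizational: I must get the index conventions in \eqref{eq: density} and \eqref{co:density} exactly right — in particular that $\gamma_j$ (not $\gamma_i$) is subtracted from the $(\bs n_i\cdot\nabla\times\bs n_j)^2$ coefficient, which is forced because the null-Lagrangian decomposition of $|\nabla\bs n_j|^2$ is what supplies the cross term $(\bs n_i\cdot\nabla\times\bs n_j)^2$. Once the identity for $|\nabla\bs n|^2$ is in hand, there is no analytic difficulty; it is a matter of carefully transcribing the coefficient matching and invoking the definition of each $\gamma_i$ as a minimum over precisely the four coefficients that multiply the four squares appearing in $|\nabla\bs n_i|^2$.
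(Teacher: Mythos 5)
Your proposal is correct and follows essentially the same route as the paper: the paper uses the identity $|\nabla \bs{n}_i|^2=(\nabla\cdot\bs{n}_i)^2+(\bs{n}_i\cdot\nabla\times\bs{n}_i)^2+|\bs{n}_i\times(\nabla\times\bs{n}_i)|^2+\nabla\cdot[(\bs{n}_i\cdot\nabla)\bs{n}_i-(\nabla\cdot\bs{n}_i)\bs{n}_i]$ together with $|\bs{n}_i\times(\nabla\times\bs{n}_i)|^2=\sum_{j\neq i}(\bs{n}_j\cdot\nabla\times\bs{n}_i)^2$, which is exactly your combined identity, and then fixes the null-Lagrangian coefficients $\gamma_i$ as the stated minima so that all reduced coefficients are nonnegative. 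Your coefficient bookkeeping and the observation that $\gamma_j$ is subtracted from the $(\bs{n}_i\cdot\nabla\times\bs{n}_j)^2$ term match the paper's \eqref{co:density} exactly.
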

\begin{proof}
Under  periodic boundary conditions, the last three term of \eqref{eq: density} do not contribute to the gradient, so we can just let $\gamma_i$ take the value in \eqref{co:density}.
Since $\mathbf{p}=(\bs {n}_1,\bs {n}_2,\bs {n}_3)^{\intercal} \in  SO(3)$, one has
\begin{align}
\left|\bs{n}_1 \times\left(\nabla \times \bs{n}_1\right)\right|^2&=\left(\bs{n}_2 \cdot \nabla \times \bs{n}_1\right)^2+\left(\bs{n}_3 \cdot \nabla \times \bs{n}_1\right)^2, \notag\\
\left|\bs{n}_2 \times\left(\nabla \times \bs{n}_2\right)\right|^2&=\left(\bs{n}_1 \cdot \nabla \times \bs{n}_2\right)^2+\left(\bs{n}_3 \cdot \nabla \times \bs{n}_2\right)^2, \notag\\
\left|\bs{n}_3 \times\left(\nabla \times \bs{n}_3\right)\right|^2&=\left(\bs{n}_1 \cdot \nabla \times \bs{n}_3\right)^2+\left(\bs{n}_2 \cdot \nabla \times \bs{n}_3\right)^2, \notag\\
\left|\nabla \bs{n}_i\right|^2&=\left(\nabla \cdot \bs{n}_i\right)^2+\left(\bs{n}_i \cdot \nabla \times \bs{n}_i\right)^2+\left|\bs{n}_i \times\left(\nabla \times \bs{n}_i\right)\right|^2 \notag\\
&\quad +\nabla \cdot\big [\left(\bs{n}_i \cdot \nabla\right) \bs{n}_i-\left(\nabla \cdot \bs{n}_i\right) \bs{n}_i\big], \quad i=1,2,3.
\end{align}
Inserting the above identities into equation \eqref{eq: density}, we arrive at the reformulated form \eqref{eq: density2}.
\end{proof}

With the help of the reformulated form \eqref{eq: density2} and under periodic boundary conditions, one has the following form of energy variation
\begin{equation}\label{variation}
  \begin{aligned}
  \frac{\delta \mathcal{F}[\mathbf{p}]}{\delta \bs n_i}& =-\gamma_i \Delta \bs{n}_i-k_i \nabla (\nabla \cdot \bs{n}_i)+\sum_{j=1}^3 k_{j i} \nabla \times\big[\left(\bs{n}_j \cdot \nabla \times \bs{n}_i \right)\bs{n}_j\big]\\
  &\quad +\sum_{j=1}^3 k_{i j}\left(\bs{n}_i \cdot \nabla \times \bs{n}_j\right)\left(\nabla \times \bs{n}_j\right),\quad i=1,2,3.
  \end{aligned}
\end{equation}

Now, we are ready to propose the second-order discrete gradient approximation for the energy variation in equation \eqref{variation}. 
\begin{prop}
Define
\begin{equation}\label{eq:OFDG}
D^B_{\mathcal{F}}(\mathbf{p})\Big|^{n+\frac{1}{2}}=\left(D^B_{\mathcal{F}}(\mathbf{p})\Big|_1^{n+\frac{1}{2}},D^B_{\mathcal{F}}(\mathbf{p})\Big|_2^{n+\frac{1}{2}},D^B_{\mathcal{F}}(\mathbf{p})\Big|_3^{n+\frac{1}{2}}\right),
\end{equation}
where $D^B_{\mathcal{F}}(\mathbf{p})\Big|_i^{n+\frac{1}{2}}$ is given by
\begin{equation}\label{variationdis}
\begin{aligned}
D^B_{\mathcal{F}}(\mathbf{p})\Big|_i^{n+\frac{1}{2}}=&-\gamma_i \nabla \cdot( \nabla  \bs n_i^{n+1/2})-k_i \nabla ( \nabla \cdot \bs n_i^{n+1/2})+\sum_{j=1}^3 k_{j i} \nabla \times\left(\beta_{ji}^{n+1/2}\bs{n}_j^{n+\frac{1}{2}}\right)\\
&+\sum_{j=1}^3 k_{i j}\beta_{ij}^{n+1/2}\left(\nabla \times \bs{n}_j^{n+\frac{1}{2}}\right),
\end{aligned}
\end{equation}
with $\beta_{ij}^{n+1/2}$ take the form
\begin{equation}\label{eq: omega}
\beta_{ij}^{n+1/2}=\big(\bs n_{i}^{n+1}\cdot \nabla \times \bs n_{j}^{n+1}+\bs n_{i}^{n}\cdot \nabla \times \bs n_{j}^{n} \big)/2.
\end{equation}
The approximation $D^B_{\mathcal{F}}(\mathbf{p})\Big|^{n+\frac{1}{2}}$ serves as a second-order discrete gradient approximation of the biaxial elastic energy variation  $  {\delta \mathcal{F}[\mathbf{p}]}/{\delta \mathbf{p}}$ in equation \eqref{variation} and it satisfies the energy difference relation
\begin{equation*}
\int_{\Omega} D_{\mathcal{F}}(\mathbf{p})\Big|^{n+\frac{1}{2}}\cdot (\mathbf{p}^{n+1}-\mathbf{p}^n) dV=\mathcal{F}_{B_i}[ \mathbf{p}^{n+1}]-\mathcal{F}_{B_i}[ \mathbf{p}^{n}].
\end{equation*}
\end{prop}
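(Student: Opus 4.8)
The plan is to verify the energy difference relation \eqref{eq: DDprop} term by term, exploiting the fact that the energy density \eqref{eq: density2} (under periodic boundary conditions) decomposes into three structurally distinct groups of quadratic terms: the Dirichlet-type terms $\tfrac12\gamma_i|\nabla\bs n_i|^2$, the divergence-squared terms $\tfrac12 k_i(\nabla\cdot\bs n_i)^2$, and the curl-projection terms $\tfrac12 k_{ij}(\bs n_i\cdot\nabla\times\bs n_j)^2$. For each group I would show that the corresponding piece of $D^B_{\mathcal{F}}(\mathbf{p})\big|^{n+\frac12}$, when integrated against $\mathbf{p}^{n+1}-\mathbf{p}^n$, produces exactly the increment of that piece of the energy. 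The key algebraic identity throughout is the telescoping/polarization formula $a^2 - b^2 = (a+b)(a-b)$ applied to the relevant scalar quantities, together with integration by parts (with no boundary contributions, by periodicity).

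First I would treat the $\gamma_i$ and $k_i$ terms, which are the easy ones. For the Dirichlet term, integration by parts gives $\int_\Omega \gamma_i(-\nabla\cdot\nabla\bs n_i^{n+1/2})\cdot(\bs n_i^{n+1}-\bs n_i^n)\,dV = \int_\Omega \gamma_i \nabla\bs n_i^{n+1/2}:\nabla(\bs n_i^{n+1}-\bs n_i^n)\,dV$, and since $\nabla\bs n_i^{n+1/2} = \tfrac12(\nabla\bs n_i^{n+1}+\nabla\bs n_i^n)$ this equals $\tfrac12\int_\Omega(|\nabla\bs n_i^{n+1}|^2 - |\nabla\bs n_i^n|^2)\,dV$, exactly the increment of $\tfrac12\gamma_i|\nabla\bs n_i|^2$. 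The $k_i$ term is handled identically, integrating by parts once to move the gradient onto $\bs n_i^{n+1}-\bs n_i^n$ and turn it into a divergence, then using the midpoint structure and $a^2-b^2=(a+b)(a-b)$ with $a=\nabla\cdot\bs n_i^{n+1}$, $b=\nabla\cdot\bs n_i^n$.

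The genuinely delicate part is the curl-projection group, and this is where the special choice \eqref{eq: omega} of $\beta_{ij}^{n+1/2}$ matters. The term $\tfrac12 k_{ij}(\bs n_i\cdot\nabla\times\bs n_j)^2$ depends on \emph{both} $\bs n_i$ and $\bs n_j$, so its variational derivative has two pieces — one with respect to $\bs n_i$ (the $\sum_j k_{ij}\beta_{ij}(\nabla\times\bs n_j)$ piece) and one with respect to $\bs n_j$ (the $\sum_j k_{ji}\nabla\times(\beta_{ji}\bs n_j)$ piece), which is why both appear in \eqref{variationdis}. Writing $\beta_{ij}^{n+1/2} = \tfrac12(\beta_{ij}^{n+1}+\beta_{ij}^n)$ where $\beta_{ij}^m := \bs n_i^m\cdot\nabla\times\bs n_j^m$, I would pair the two contributions coming from a fixed index pair $(i,j)$: integrating $k_{ij}\beta_{ij}^{n+1/2}(\nabla\times\bs n_j^{n+1/2})\cdot(\bs n_i^{n+1}-\bs n_i^n)$ directly, and integrating $k_{ij}\nabla\times(\beta_{ij}^{n+1/2}\bs n_i^{n+1/2})\cdot(\bs n_j^{n+1}-\bs n_j^n)$ by parts to move the curl onto $\bs n_j^{n+1}-\bs n_j^n$. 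Adding the two gives $k_{ij}\beta_{ij}^{n+1/2}\big[(\nabla\times\bs n_j^{n+1/2})\cdot(\bs n_i^{n+1}-\bs n_i^n) + \bs n_i^{n+1/2}\cdot\nabla\times(\bs n_j^{n+1}-\bs n_j^n)\big]$, and the bracket must collapse to $\beta_{ij}^{n+1}-\beta_{ij}^n$. The identity needed is the bilinear telescoping $a_1 b_1 - a_0 b_0 = \tfrac12(a_1+a_0)(b_1-b_0) + \tfrac12(b_1+b_0)(a_1-a_0)$ with $a_m = \bs n_i^m$, $b_m = \nabla\times\bs n_j^m$ (paired via the dot product, using linearity of $\nabla\times$); multiplying by $k_{ij}\beta_{ij}^{n+1/2} = k_{ij}\cdot\tfrac12(a_1\cdot b_1 + a_0\cdot b_0)$ and then using $c^2 - d^2 = (c+d)(c-d)$ with $c = \beta_{ij}^{n+1} = a_1\cdot b_1$, $d = \beta_{ij}^n = a_0\cdot b_0$ yields exactly $\tfrac12 k_{ij}\big((\beta_{ij}^{n+1})^2 - (\beta_{ij}^n)^2\big)$, the increment of the $(i,j)$ curl term.

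Summing over $i$ and over the pair $(i,j)$, and invoking the relabeling between the indices in the two sums in \eqref{variationdis} (the $k_{ji}$ sum over $j$ contributes the $\bs n_i$-derivative of $\tfrac12 k_{ji}(\bs n_j\cdot\nabla\times\bs n_i)^2$), all contributions assemble into $\mathcal{F}_{Bi}[\mathbf{p}^{n+1}] - \mathcal{F}_{Bi}[\mathbf{p}^n]$, which is the claimed relation \eqref{eq: DDprop} with $D_{\mathcal{F}} = D^B_{\mathcal{F}}$. The second-order accuracy is immediate since every ingredient — $\bs n_i^{n+1/2}$, $\mathbf{V}_i^{n+1/2}$, and $\beta_{ij}^{n+1/2}$ — is a midpoint average, hence a second-order approximation of the corresponding quantity at $t^{n+1/2}$, so $D^B_{\mathcal{F}}(\mathbf{p})\big|^{n+1/2}$ differs from $\delta\mathcal{F}[\mathbf{p}]/\delta\mathbf{p}$ evaluated at $t^{n+1/2}$ by $O(\tau^2)$. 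I expect the main obstacle to be purely organizational: keeping the index bookkeeping straight between the two curl sums and confirming that the boundary terms from the two separate integrations by parts (one on the $\nabla\times(\beta\bs n)$ term, and the intermediate one needed to expose the $\beta_{ij}^{n+1}-\beta_{ij}^n$ structure) genuinely cancel under periodicity — there is no analytic difficulty, only the risk of a sign or an index slip.
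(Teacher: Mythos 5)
Your proposal is correct and follows essentially the same route as the paper: the same three-way splitting of the energy into Dirichlet, divergence, and curl-projection parts, the same polarization identities (your bilinear telescoping combined with $c^2-d^2=(c+d)(c-d)$ is exactly the paper's displayed identity for $|\bs n_i^{n+1}\cdot\nabla\times\bs n_j^{n+1}|^2-|\bs n_i^{n}\cdot\nabla\times\bs n_j^{n}|^2$), and the same Taylor/midpoint argument for second-order accuracy of $\beta_{ij}^{n+1/2}$. The only cosmetic difference is that the bilinear identity is pointwise, so no intermediate integration by parts is needed to expose $\beta_{ij}^{n+1}-\beta_{ij}^{n}$; the single curl integration by parts under periodicity suffices.
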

\begin{proof}
Taking no account of boundary terms, the biaxial elasticity can be divided into three parts:
\begin{equation}
\mathcal{F}[\mathbf{p}]=\frac{1}{2}\mathcal{F}_1[\mathbf{p}]+\frac{1}{2}\mathcal{F}_2[\mathbf{p}]  +\frac{1}{2}\mathcal{F}_3[\mathbf{p}],
\end{equation}
and these three parts are
\begin{equation}
\begin{aligned}
&\mathcal{F}_1[\mathbf{p}]=\sum_{i=1}^3 \gamma_i\int_{\Omega} \left|\nabla \bs{n}_i\right|^2dV,\\
&\mathcal{F}_2[\mathbf{p}]=\sum_{i=1}^3 k_i\int_{\Omega}\left(\nabla \cdot \bs{n}_i\right)^2dV, \\
&\mathcal{F}_3[\mathbf{p}]=\sum_{i, j=1}^3 k_{i j}\int_{\Omega}\left(\bs{n}_i \cdot \nabla \times \bs{n}_j\right)^2dV.
\end{aligned}
\end{equation}
For the first term, using integration by parts and the periodic  boundary conditions, we can derive that
\begin{equation*}
\begin{aligned}
\mathcal{F}_1[\mathbf{p}^{n+1}] -\mathcal{F}_1[\mathbf{p}^n]=&\sum_{i=1}^3\gamma_i \int (|\nabla   \bs{n}_i^{n+1}|^{2}-|\nabla  \bs{n}_i^{n}|^{2} )dV\\
=&\sum_{i=1}^3\gamma_i \int (\nabla   \bs{n}_i^{n+1}+\nabla  \bs{n}_i^{n})(\nabla   \bs{n}_i^{n+1}-\nabla   \bs{n}_i^{n}) dV\\
=&2 \sum_{i=1}^3\gamma_i\int \nabla  ( \bs{n}_i^{n+1/2}) \nabla   (\bs{n}_i^{n+1}-\bs{n}_i^{n}) dV\\
=&-2\sum_{i=1}^3\gamma_i\int \nabla \cdot( \nabla  \bs n_i^{n+1/2})\cdot (\bs n_i^{n+1}-\bs n_i^n) dV\\
=&-\int 2\left(\gamma_1\nabla \cdot( \nabla  \bs n_1^{n+1/2}),\gamma_2 \nabla \cdot( \nabla  \bs n_2^{n+1/2}),\gamma_3\nabla \cdot( \nabla \bs n_3^{n+1/2}) \right)\cdot(\mathbf{p}^{n+1}-\mathbf{p}^{n})dV.
\end{aligned}
\end{equation*}
Define
\begin{equation}\label{eq: df1}
D^B_{\mathcal{F}_{1}}(\mathbf{p})\Big|^{n+\frac{1}{2}}=-2\left(\gamma_1\nabla \cdot( \nabla  \bs n_1^{n+1/2}),\gamma_2 \nabla \cdot( \nabla  \bs n_2^{n+1/2}),\gamma_3\nabla \cdot( \nabla \bs n_3^{n+1/2}) \right),
\end{equation}
and one readily observes that 
\begin{equation}\label{eq: df1rela}
\int D^B_{\mathcal{F}_{1}}(\mathbf{p})\Big|^{n+\frac{1}{2}}\cdot (\mathbf{p}^{n+1}-\mathbf{p}^n)  dV=\mathcal{F}_1[\mathbf{p}^{n+1}] -\mathcal{F}_1[\mathbf{p}^n].
\end{equation}

For the second term, using  integration by parts and the periodic  boundary condition, it is direct to calculate that
\begin{equation*}
\begin{aligned}
\mathcal{F}_2[\mathbf{p}^{n+1}] -\mathcal{F}_2[\mathbf{p}^n]=&\sum_{i=1}^3 k_i \int (|\nabla \cdot  \bs{n}_i^{n+1}|^{2}-|\nabla \cdot \bs{n}_i^{n}|^{2} )dV \notag\\
=&\sum_{i=1}^3 k_i \int (\nabla \cdot  \bs{n}_i^{n+1}+\nabla \cdot  \bs{n}_i^{n})(\nabla \cdot  \bs{n}_i^{n+1}-\nabla \cdot  \bs{n}_i^{n}) dV \notag\\
=&2 \sum_{i=1}^3 k_i\int \nabla \cdot \big( \bs{n}_i^{n+1/2} \big) \nabla \cdot  \big(\bs{n}_i^{n+1}-\bs{n}_i^{n} \big) dV \notag\\
=&-2\sum_{i=1}^3 k_i\int \nabla \big( \nabla \cdot \bs n_i^{n+1/2} \big)\cdot  \big(\bs n_i^{n+1}-\bs n_i^n \big) dV \notag\\
=&\int -2\left[ k_1\nabla \big( \nabla \cdot \bs n_1^{n+1/2} \big), k_2\nabla \big( \nabla \cdot \bs n_2^{n+1/2} \big),k_3\nabla \big( \nabla \cdot \bs n_3^{n+1/2} \big) \right] \cdot(\mathbf{p}^{n+1}-\mathbf{p}^{n})dV.
\end{aligned}
\end{equation*}
Hence, we can define  
\begin{equation}\label{eq: df2}
D^B_{\mathcal{F}_{2}}(\mathbf{p})\Big|^{n+\frac{1}{2}}=-2\left(k_1\nabla \big( \nabla \cdot \bs n_1^{n+1/2} \big), k_2\nabla \big( \nabla \cdot \bs n_2^{n+1/2}\big),k_3 \nabla \big( \nabla \cdot \bs n_3^{n+1/2}\big) \right)
\end{equation}
to obtain 
\begin{equation}\label{eq: df2rela}
\int D^B_{\mathcal{F}_{2}}(\mathbf{p})\Big|^{n+\frac{1}{2}}\cdot (\mathbf{p}^{n+1}-\mathbf{p}^n)  dV=\mathcal{F}_2[\mathbf{p}^{n+1}] -\mathcal{F}_2[\mathbf{p}^n].
\end{equation}

For the third term, we need to resort to the following identity
\begin{equation*}
\begin{aligned}
&\big|\bs{n}_i^{n+1}\cdot \nabla \times \bs{n}_j^{n+1} \big|^{2}- \big|\bs{n}_i^{n}\cdot \nabla \times \bs{n}_j^{n} \big|^{2}\\
&=2\big((\bs{n}_i^{n+1}-\bs{n}_i^{n})\cdot \nabla \times \bs{n}_j^{n+1/2} +\bs{n}_i^{n+1/2}\cdot \nabla\times (\bs{n}_j^{n+1}-\bs{n}_j^{n})\big)\beta_{ij}^{n+1/2},
\end{aligned}
\end{equation*}
where the definition of $\beta_{ij}$ in equation \eqref{eq: omega} has been used. 
Thus, we can follow a similar manner as the first two terms to obtain
\begin{equation}\label{eq: df3dis}
\begin{aligned}
\mathcal{F}_3[\mathbf{p}^{n+1}] -\mathcal{F}_3[\mathbf{p}^n]=&2 \sum_{i, j=1}^3 k_{ij} \int \Big(\beta_{ij}^{n+1/2}  \nabla \times \bs n_j^{n+1/2} \Big) \cdot (\bs n_i^{n+1}-\bs n_i^n) \\
&+\big(\nabla \times (\beta_{ij}^{n+1/2}  \bs n_{i}^{n+1/2})\big)\cdot (\bs n_j^{n+1}-\bs n_j^n) dV.
\end{aligned}
\end{equation}
As a consequence, we can define the discrete gradient approximation for the last term  as
\begin{equation}\label{eq: df3}
  \begin{aligned}
 D_{\mathcal{F}_3}(\mathbf{p})\Big|^{n+1 / 2}=2 &\Big(  \sum_{j=1}^3 k_{j 1} \nabla \times\big(\beta_{j 1}^{n+1 / 2} \bs n_j^{n+\frac{1}{2}}\big)+\sum_{j=1}^3 k_{1 j} \beta_{1 j}^{n+1 / 2}\big(\nabla \times \bs n_j^{n+\frac{1}{2}}\big), \\
	  & \sum_{j=1}^3 k_{j 2} \nabla \times\big(\beta_{j 2}^{n+1 / 2} \bs n_j^{n+\frac{1}{2}}\big)+\sum_{j=1}^3 k_{2 j} \beta_{2 j}^{n+1 / 2}\big(\nabla \times \bs n_j^{n+\frac{1}{2}}\big), \\
	  & \sum_{j=1}^3 k_{j 3} \nabla \times\big(\beta_{j 3}^{n+1 / 2} \bs n_j^{n+\frac{1}{2}}\big)+\sum_{j=1}^3 k_{3 j} \beta_{3 j}^{n+1 / 2}\big(\nabla \times \bs  n_j^{n+\frac{1}{2}}\big)\Big),
  \end{aligned}
\end{equation}
which satisfies the energy difference relation
\begin{equation}\label{eq: df3rela}
  \int D^B_{\mathcal{F}_{3}}(\mathbf{p})\Big|^{n+\frac{1}{2}}\cdot (\mathbf{p}^{n+1}-\mathbf{p}^n)  dV=\mathcal{F}_3[\mathbf{p}^{n+1}] -\mathcal{F}_3[\mathbf{p}^n].
\end{equation}
Combining the equations \eqref{eq: df1rela}, \eqref{eq: df2rela} and \eqref{eq: df3rela} we can arrive at the discrete energy relation \eqref{eq: DDprop}

Moreover, by comparing the form of the discrete gradient approximation \eqref{variationdis} and the continuous energy variation \eqref{variation}, one can directly see that the terms except $\beta_{ij}^{n+1/2}$ in \eqref{variationdis}  are second order central difference  approximations of the corresponding terms in \eqref{variation}. Nevertheless, we can directly calculate that 
\begin{equation*}
\begin{aligned}
2\beta_{ij}^{n+1/2}=&\big(\bs{n}_i(t_{n+1 / 2})+\partial_t \bs{n}_i(t_{n+1 / 2})\Delta t+O({\Delta t}^2) \big) \cdot \nabla \times \big(\bs{n}_j(t_{n+1 / 2})+\partial_t \bs{n}_j(t_{n+1 / 2})\Delta t+O({\Delta t}^2) \big)\\
+&\big(\bs{n}_i(t_{n+1 / 2})-\partial_t \bs{n}_i(t_{n+1 / 2})\Delta t+O({\Delta t}^2) \big) \cdot \nabla \times \big(\bs{n}_j(t_{n+1 / 2})-\partial_t \bs{n}_j(t_{n+1 / 2})\Delta t+O({\Delta t}^2) \big)\\
=&\bs{n}_i(t_{n+1 / 2}) \cdot \nabla \times \bs{n}_j(t_{n+1 / 2})+O({\Delta t}^2).
\end{aligned}
\end{equation*}
Thus, we conclude that $D^B_{\mathcal{F}}(\mathbf{p})\Big|^{n+\frac{1}{2}}$ is  a second-order discrete gradient approximation of the energy variation  $  {\delta \mathcal{F}[\mathbf{p}]}/{\delta \mathbf{p}}$, which ends the proof.
\end{proof}
\begin{rem}
  Let us now explain how to deal with non-periodic boundary conditions. Let $\mathbf{p}$ perturb along $\delta \mathbf{p}$.
  Still, we use the form \eqref{eq: density2} as an example. 
  We can obtain the following variational form:
\begin{equation*}
\begin{aligned}
 & \lim_{s \rightarrow 0} \frac{\mathcal{F}[\mathbf{p}+s \delta \mathbf{p}]-\mathcal{F}[\mathbf{p}]}{s}=\sum_{i=1}^3 \gamma_i\int_{\Omega} (\nabla \bs n_i)\cdot(\nabla \delta \bs n_i)dV + \sum_{i=1}^3 k_i\int_{\Omega} (\nabla \cdot \bs n_i)(\nabla \cdot \delta \bs n_i)dV\\
&+\sum_{i,j=1}^3 k_{ij}\int_{\Omega} (\bs n_i \cdot \nabla \times \bs n_j)(\delta \bs n_i \cdot \nabla \times \bs n_j + \bs n_i \cdot  \nabla \times \delta\bs n_j)dV\\
=&-\sum_{i=1}^3 \gamma_i\int_{\Omega} (\Delta \bs n_i)\cdot (\delta \bs n_i)dV - \sum_{i=1}^3 k_i\int_{\Omega} (\nabla (\nabla \cdot \bs n_i))\cdot (\delta \bs n_i)dV\\
&+\sum_{i,j=1}^3 k_{ij}\int_{\Omega} (\bs n_i \cdot \nabla \bs n_j)\nabla \times \bs n_j \cdot \delta \bs n_i dV+\sum_{i,j=1}^3 k_{ji}\int_{\Omega} \nabla \times\left(\left(\bs{n}_j \cdot \nabla \times \bs{n}_i\right) \bs{n}_j\right) \cdot \delta \bs n_i  \,dV \\
&+\sum_{i=1}^3 \gamma_i\int_{\partial \Omega} (\nabla \bs n_i)^{\intercal} \bs \nu \cdot (\delta \bs n_i)\,dS + \sum_{i=1}^3 k_i\int_{\partial \Omega} (\nabla \cdot \bs n_i) \bs \nu  \cdot \delta \bs n_i \,dS\\
&+\sum_{i,j=1}^3 k_{ji}\int_{\partial \Omega} (\bs n_j \cdot \nabla \times \bs n_i)\bs n_j \times \bs \nu \cdot \delta \bs n_i \,dS,
\end{aligned}
\end{equation*}
where $\bs \nu$ is the outward unit normal vector.  
We recognize that the volume integrals yield the same terms as in the case of periodic boundary conditions, while the surface integrals might give extra conditions. 
When the Dirichlet boundary condition is adopted, we have $\delta \bs n_i=0$ on $\partial \Omega$, we can directly to observe that the surface integral vanishes. Since $\delta \bs n_i$ is perpendicular to $\bs n_i$ for $i=1, 2, 3$, the Neumann boundary condition shall be given by
\begin{equation}\label{eq:bound3}
\bs n_i \times \big[\gamma_i (\nabla \bs n_i)^{\intercal}\bs \nu +k_i \nabla  \cdot \bs  n_i+\sum_{j=1}^3 k_{ji}(\bs n_j \cdot \nabla \times \bs n_i)\bs n_j \times \bs \nu \big]=0, \quad i=1, 2, 3.
\end{equation}
We now need to equip the discrete gradient with appropriate approximation of the boundary condition, which turns out to be 
\begin{equation}\label{eq:bound3dis}
\bs n_i^{n+\frac{1}{2}} \times [\gamma_i (\nabla \bs n_i^{n+\frac{1}{2}})^{\intercal}\bs \nu +k_i \nabla  \cdot \bs n_i^{ n+\frac{1}{2}}+\sum_{j=1}^3 k_{ji} \beta_{ji}^{ n+\frac{1}{2}}\bs n_j^{n+\frac{1}{2}} \times \bs \nu ]=0, \quad i=1, 2, 3.
\end{equation}
If additional boundary energy is present, we use the same way to get the suitable discretization. 
\end{rem}

\begin{rem}
Besides the proposed discrete gradient approximation, other discrete gradient approximations  that satisfy the energy difference relation \eqref{eq: DDprop} can be adopted, e.g. the mean-value discrete gradient \cite{celledoni2012preserving,harten1983upstream}:
\begin{equation}\label{eq:mv}
D^M_{\mathcal{F}}(\mathbf{p})\Big|^{n+\frac{1}{2}}=\int_0^1 \frac{\delta \mathcal{F}_{B_i}}{\delta \mathbf{p}}\big[ (1-s)\mathbf{p}^{n+1}+s\mathbf{p}^n \big]ds,
\end{equation}
and the Gonzalez discrete gradient \cite{gonzalez2000time}:
\begin{equation}\label{eq: Gondg}
\begin{aligned}
D^G_{\mathcal{F}}(\mathbf{p})\Big|^{n+\frac{1}{2}}=&\frac{\delta \mathcal{F}_{B_i}}{\delta \mathbf{p}}[\mathbf{p}^{n+\frac{1}{2}}]\\
&+\frac{ \mathcal{F}_{B_i}[\mathbf{p}^{n+1}]- \mathcal{F}_{B_i}[\mathbf{p}^{n}]-\int_{\Omega} \frac{\delta \mathcal{F}_{B_i}}{\delta \mathbf{p}}[\mathbf{p}^{n+\frac{1}{2}}] \cdot (\mathbf{p}^{n+1}-\mathbf{p}^n) dV  }{\int_{\Omega} \big(\mathbf{p}^{n+1}-\mathbf{p}^n\big) \cdot \big(\mathbf{p}^{n+1}-\mathbf{p}^n \big)dV }(\mathbf{p}^{n+1}-\mathbf{p}^n).
\end{aligned}
\end{equation}
However, these two approximations have their drawbacks. The implementation of the mean-value discrete gradient relies on numerical integration, while it preserves the discrete energy dissipation law up to the numerical quadrature error.  Thus, it results in a substantial increase in the computational cost in each iteration when we solve the nonlinear scheme. As for the Gonzalez discrete gradient, it is sensitive to roundoff errors and suffers from convergence issue when the gradient flow system tends to equilibrium (when the denominator $\int_{\Omega} \big(\mathbf{p}^{n+1}-\mathbf{p}^n\big) \cdot \big(\mathbf{p}^{n+1}-\mathbf{p}^n \big)dV$ approaches $0$).   
\end{rem}

\subsection{Time adaptivity strategy}\label{sect: timeadap}
The proposed discrete scheme is nonlinear and one needs to adopt nonlinear solvers such as inexact Newton-Krylov (INK) method with proper line search technique to solve it efficiently (see more details in \cite{kelley1995iterative}). It is worthwhile to note that due to the sensitivity of convergence of nonlinear solver to initial guess, suitable choice of time step can overcome this issue and improve the accuracy and efficiency of the computation. Moreover, as is shown in numerical experiments in the next section, the evolution of liquid crystal driven by biaxial elastic energy may exhibit multiple stages. It may evolve dramatically in relatively short time and keep stable or evolve slowly for the rest of the time. Thus, it is necessary to adopt appropriate time-adaptivity strategy to make efficient and accurate simulations. Inspired by the method proposed in \cite{qiao2011adaptive}, we adopt the following time-adaptive strategy
\begin{equation}\label{eq: adap}
\tau_{n+1}={\rm max}\bigg(\tau_{\rm min},  \frac{\tau_{\rm max}}{ \sqrt{1+\alpha| (\mathcal{F}[\mathbf{p}^{n}]-\mathcal{F}[\mathbf{p}^{n-1}])/\tau_{n}        |^2 }}   \bigg),
\end{equation} 
where $\tau_{n}$ is the $n$-th time step, $\alpha$ is a constant chosen to adjust the speed of change of the step size, and $\tau_{\rm max}$ and $\tau_{\rm min}$ are the upper and lower bounds of the time step sizes, respectively.

\section{Representative numerical experiments}\label{sect: num} 
In this section, we conduct several numerical experiments to demonstrate the accuracy and efficiency of the proposed gRdg method for orthonormal frame gradient flow system, as well as its ability to preserve the orthonormality constraint and energy dissipation law at the discrete level.
The periodic boundary conditions are assumed, discretized using the Fourier spectral method \eqref{eq: gfmodel}.
An inexact Newton-Krylov (INK) solver is adopted to solve the resultant nonlinear equation with the stopping tolerance $10^{-8}$.
We first present the spatial and temporal convergence rates using manufactured solutions. Then for a prescribed initial profile of the frame field, we examine the discrete orthonormality error and energy curve as functions of time.
Moreover, we compare the simulation results obtained by biaxial orthonormal frame and uniaxial Oseen-Frank gradient flows for special choice of elastic coefficients such that the biaxial elasticity reduces to the uniaxial Oseen-Frank energy.
Last, we investigate the dynamics of frame field with highly anisotropic elastic coefficients derived from molecular parameters.
In what follows, let us denote the elastic coefficients $\hat{\bs K}=\left(K_1, K_2, \cdots, K_{12}\right)^{\intercal}$ for notational convenience. And without loss of generality, we take $\chi_1=\chi_2=\chi_3=2$.

\subsection{Convergence tests} The aim of this subsection is to show the convergence rates of the method developed herein using a manufactured solution. The computational domain is fixed with $\Omega=\left[0,2\pi\right]^3.$ We assume the following analytic expression for the manufactured solution $\mathbf{p}(\bs x,t)=(\bs n_1, \bs n_2, \bs n_3)$ of the orthonormal frame gradient flow system \eqref{eq: gfmodel}
\begin{equation}\label{eq:conver}
  \begin{aligned}
	  n_{11}(\bs x,t)=&\sin{\big(\sin{(x_1+t)}\cos{(x_2)}\sin{(x_3)}\big)}\cos{\big(\cos{(x_1)}\sin{(x_2+t)}\cos{(x_3)}\big)},\\
	  n_{12}(\bs x,t)=&\sin{\big(\sin{(x_1+t)}\cos{(x_2)}\sin{(x_3)}\big)}\sin{\big(\cos{(x_1)}\sin{(x_2+t)}\cos{(x_3)}\big)},\\
	  n_{13}(\bs x,t)=&\cos{\big(\sin{(x_1+t)}\cos{(x_2)}\sin{(x_3)}\big)},\\
	  n_{21}(\bs x,t)=&\cos{\big(\sin{(x_1+t)}\cos{(x_2)}\sin{(x_3)}\big)}\cos{\big(\cos{(x_1)}\sin{(x_2+t)}\cos{(x_3)}\big)},\\
	  n_{22}(\bs x,t)=&\cos{\big(\sin{(x_1+t)}\cos{(x_2)}\sin{(x_3)}\big)}\sin{\big(\cos{(x_1)}\sin{(x_2+t)}\cos{(x_3)}\big)},\\
	  n_{23}(\bs x,t)=&-\sin{\big(\sin{(x_1+t)}\cos{(x_2)}\sin{(x_3)}\big)},\\
	  n_{31}(\bs x,t)=&-\sin{\big(\cos{(x_1)}\sin{(x_2+t)}\cos{(x_3)}\big)},\\
	  n_{32}(\bs x,t)=&\cos{\big(\cos{(x_1)}\sin{(x_2+t)}\cos{(x_3)}\big)},\quad n_{33}(\bs x,t)=0,\\
  \end{aligned}
\end{equation}
such that $\mathbf{p}$ is orthonormal and periodic in $\Omega$. In order for the above analytic expression to satisfy system \eqref{eq: rotform}, a forcing term $\mathbf{f}(\bs x,t)$ is added to \eqref{eq: rotform} such that
\begin{equation}\label{eq: forceterm}
\mathbf{f}=\mathbf{p}_t-\mathbf{p}A.
\end{equation}
We prescribe the initial condition to be $\mathbf{p}_0(\bs x):=\mathbf{p}(\bs x,t=0)$ and choose the elastic coefficients to be $\hat{\bs K}=(1,0.01,0.01,1,0.01,0.01,1,$ $0.01,0.01,1,0.01,0.01)^{\intercal}$.

Let us start with the spatial convergence test. We fix the time step size $\tau=10^{-4}$ and increase the number of Fourier collocation points in one dimension $N$ from 6 to 34. In Figure \ref{figs: spatialtest}, we depict the $L^{\infty}$-errors of $\bs n_1$,$\bs n_2$, $\bs n_3$ at $t=0.2$ as functions of $N$. It can be observed that when $N$ is below $30$,  with increased $N$, the errors decrease exponentially. While for $N\geq 16$, the errors curves level off at $10^{-8}$, showing a saturation due to the temporal discretization error and the tolerance of INK solver.

Next, we proceed to the temporal convergence test with a fixed $N=40$. The time step size $\tau$ is decreased by half  from 0.1 to 0.003125 and the $L^{\infty}$-errors of $\bs n_1$, $\bs n_2$, $\bs n_3$ are recorded at $t=0.2$. Figure \ref{figs: temporaltest} shows the numerical errors as functions of $\tau$. It can be seen clearly that a second-order convergence rate in time is achieved by using the gRdg scheme.

\begin{figure}[tbp]
\begin{center}
	\subfigure[$L^{\infty}$-error of $\bs n_{1}$]{ \includegraphics[scale=.36]{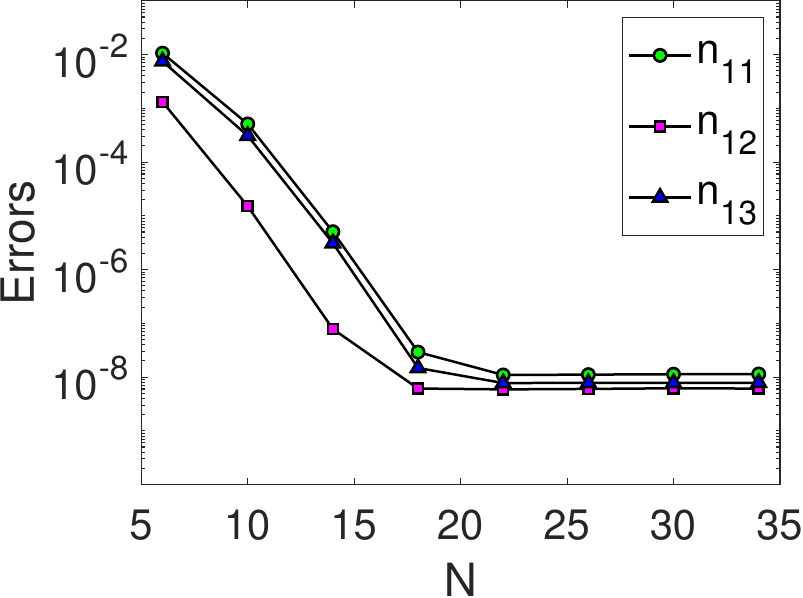}}
	\subfigure[$L^{\infty}$-error of $\bs n_{2}$ ]{ \includegraphics[scale=.36]{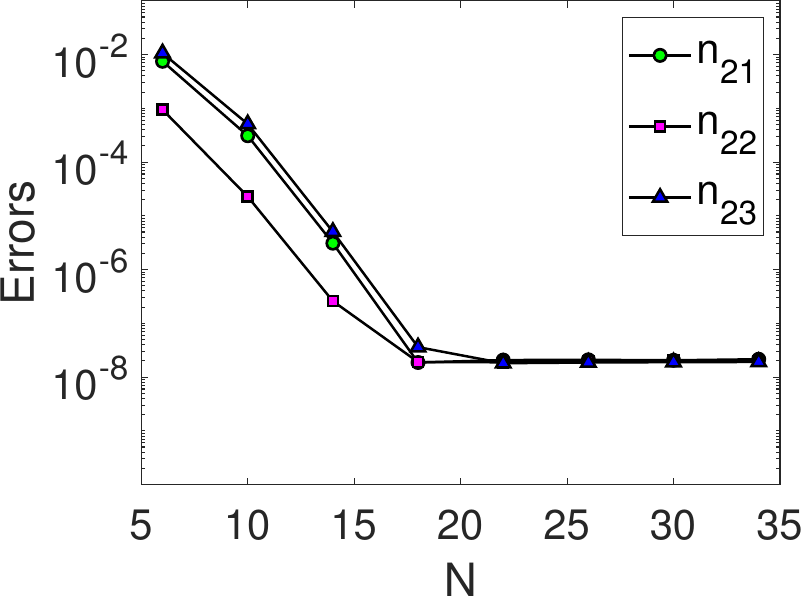}}
	\subfigure[ $L^{\infty}$-error of $\bs n_{3}$]{ \includegraphics[scale=.36]{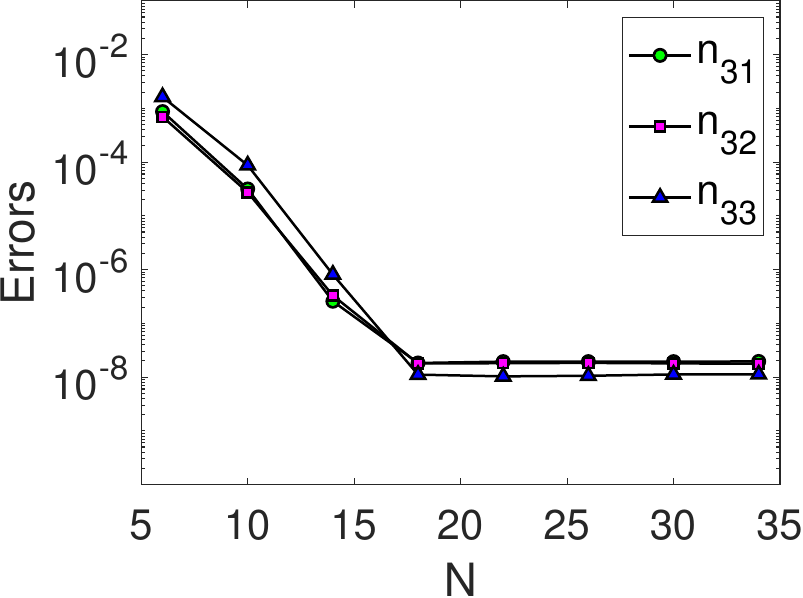}} 
   \caption{\small Spatial convergence test: $L^{\infty}$-errors of the gRdg method as a function of the number of Fourier collocation points $N$ in each dimension  for  (a) $\bs n_{1}$,  (b) $\bs n_{2}$, (c) $\bs n_{3}$.} 
	 \label{figs: spatialtest}
\end{center}
\end{figure}

\begin{figure}[tbp]
  \begin{center}
	  \subfigure[$L^{\infty}$-error of $\bs n_{1}$]{ \includegraphics[scale=.37]{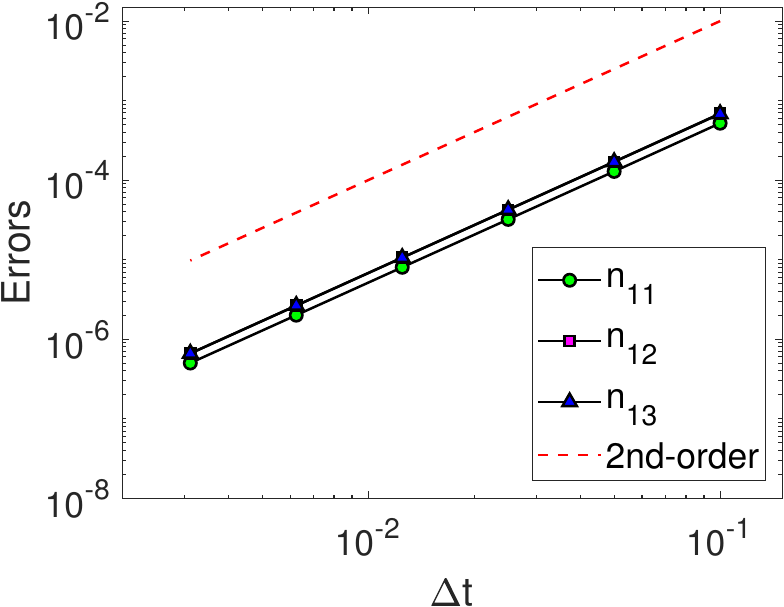}}
	  \subfigure[$L^{\infty}$-error of $\bs n_{2}$ ]{ \includegraphics[scale=.37]{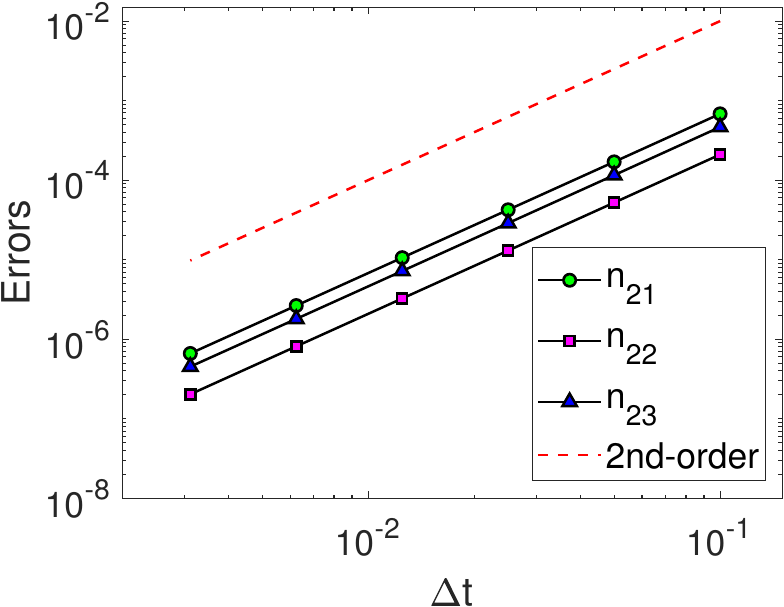}}
	  \subfigure[ $L^{\infty}$-error of $\bs n_{3}$]{ \includegraphics[scale=.37]{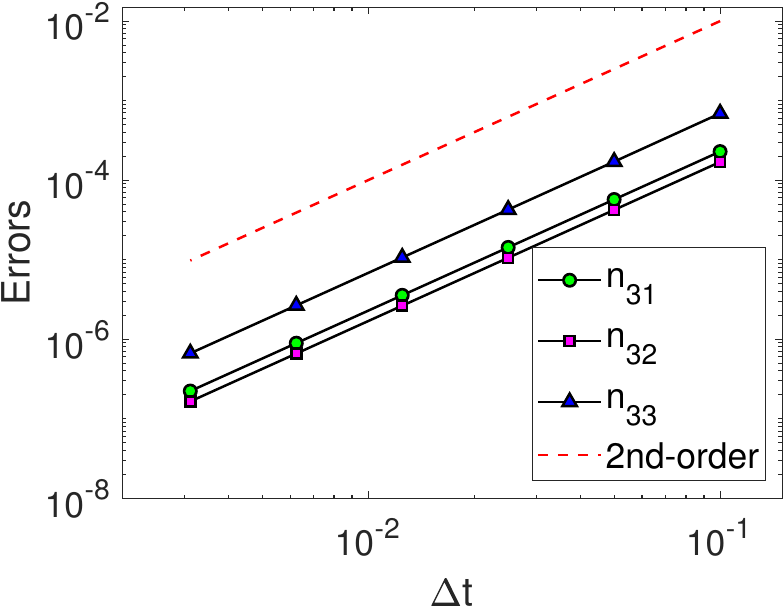}} 
  \caption{\small Temporal convergence test: $L^{\infty}$-errors of the gRdg method as a function of the time step size $\tau$ for (a) $\bs n_{1}$,  (b) $\bs n_{2}$, (c) $\bs n_{3}$. }
	 \label{figs: temporaltest}
  \end{center}
  \end{figure}

\subsection{Property test}\label{sect: ProTest}
In this subsection, we present several numerical simulations of the evolution of the frame field governed by the orthonormal frame gradient flow system to demonstrate that the proposed gRdg scheme preserves the orthonormality constraint and energy dissipation law at the discrete level.  For the sake of clear visualization, let us represent the frame field in each physical point by colored bricks. In Figure \ref{figs:direction} (a)-(b), we depict the front and back view of the colored brick, with the longest, middle and the shortest sides represent $\bs n_1$, $\bs n_2$, $\bs n_3$, respectively.
To further distinguish these three directions,  every corners of the brick are prescribed with different colors, see Figure \ref{figs:direction}.
We consider the computational domain to be a  cube $\Omega = \left[-1,1\right]^3$, and assume that the frame field is homogeneous along the $x_3$ direction.
The number of Fourier modes is taken to be $40$ in $x_1$ and $x_2$ directions, while that takes 1 for $x_3$ direction.
The time-adaptive strategy proposed in Subsection \ref{sect: timeadap} is employed to examine its effectiveness for accelerating the numerical simulations.
Hereafter, we fix the parameters $\tau_{\rm max}=2 \times 10^{-3}$, $\tau_{\rm min}=10^{-5}$ and $\alpha=10^{-3}$ in the equation \eqref{eq: adap} for the adaptivity strategy. 

First, we intentionally take the elastic coefficients $\hat{\bs K}=(1, 0, 0, 1, 0, 0, 1, 0, 0, 1, 0, 0)^{\intercal}$ that are degenerate. since the elastic energy $\mathcal{F}_{B i}[\mathbf{p}]$ reduces to the classical uniaxial Oseen-Frank energy functional (see Appendix \ref{sect: app2}). 
The following initial profile of the orthonormal frame field is chosen, 
\begin{equation}\label{eq:utest1}
  \begin{aligned}
	  \bs n_{1}(\bs x,t=0)=&\left(\sin{\big(2\sin(\pi x_1) \big)}\cos{(2\pi x_2)}, \sin{\big(2\sin(\pi x_1)\big)}\sin{(2\pi x_2)}, \cos{(2\sin(\pi x_1))}\right)^{\intercal},\\
	  \bs n_{2}(\bs x,t=0)=&\left(\cos{\big(2\sin(\pi x_1) \big)}\cos{(2\pi x_2)}, \cos{\big(2\sin(\pi x_1)\big)}\sin{(2\pi x_2)}, -\sin{\big(2\sin(\pi x_1)\big)}\right)^{\intercal},\\
	  \bs n_{3}(\bs x,t=0)=&\left(-\sin{(2\pi x_2)},\cos{(2\pi x_2)},0\right)^{\intercal}. 
  \end{aligned}
\end{equation}
\begin{figure}[tbp]
  \begin{center}  
  \subfigure[Front view]{  \includegraphics[scale=.23] {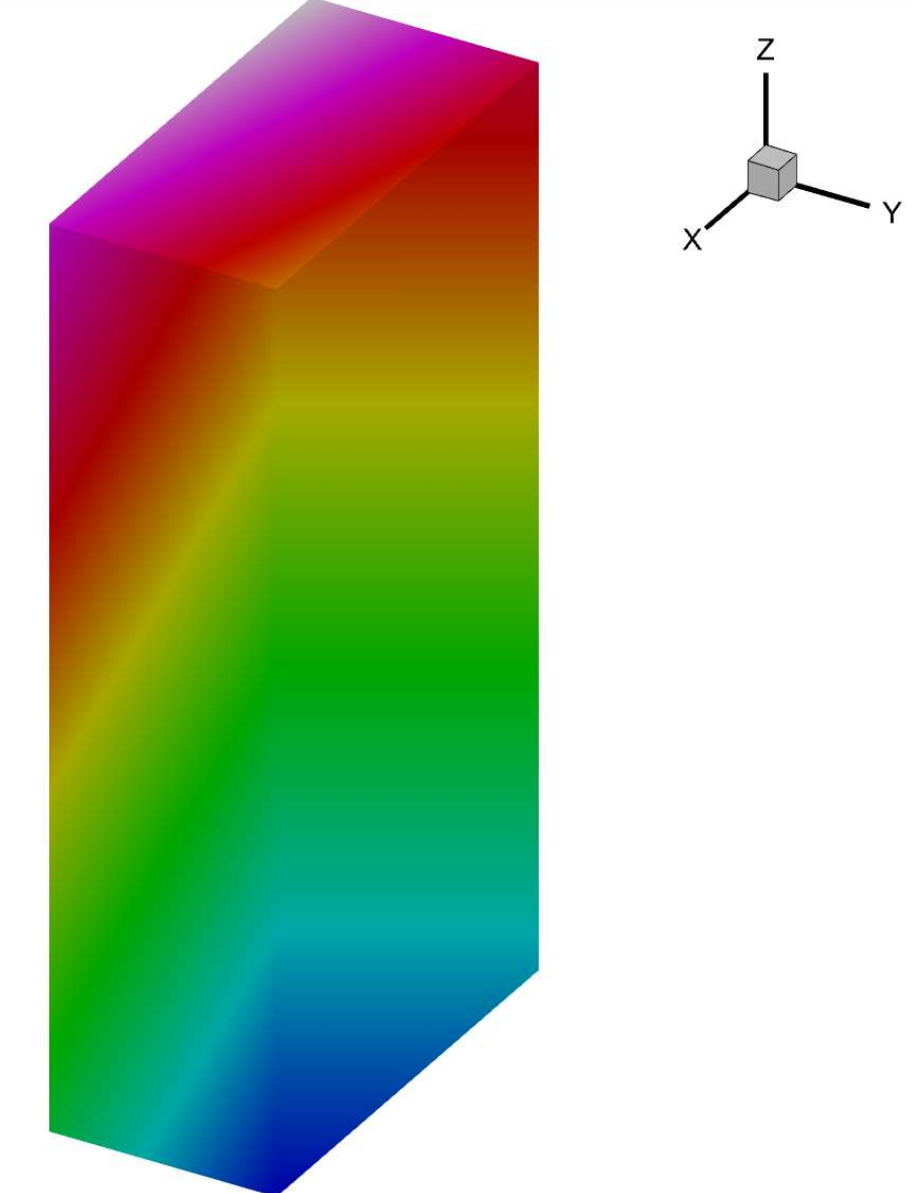}}  \qquad \quad
  \subfigure[Back view]{  \includegraphics[scale=.23] {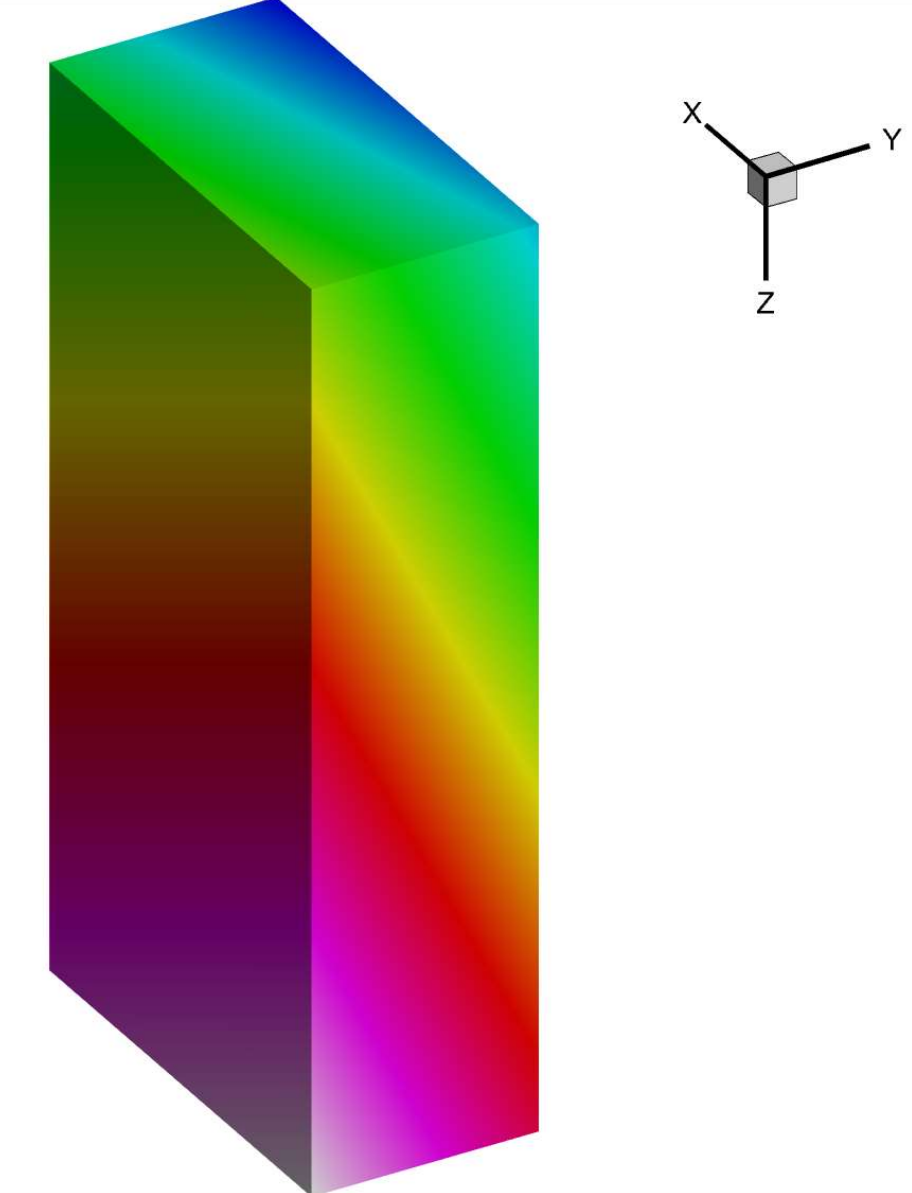}}  
  \caption{\small A colored brick representing the orthonormal frame $\mathbf{p}$}  
  \label{figs:direction}
  \end{center} 
\end{figure}
\begin{figure}[tbp]
  \begin{center}  
  \subfigure[Initial profile]{  \includegraphics[scale=.33] {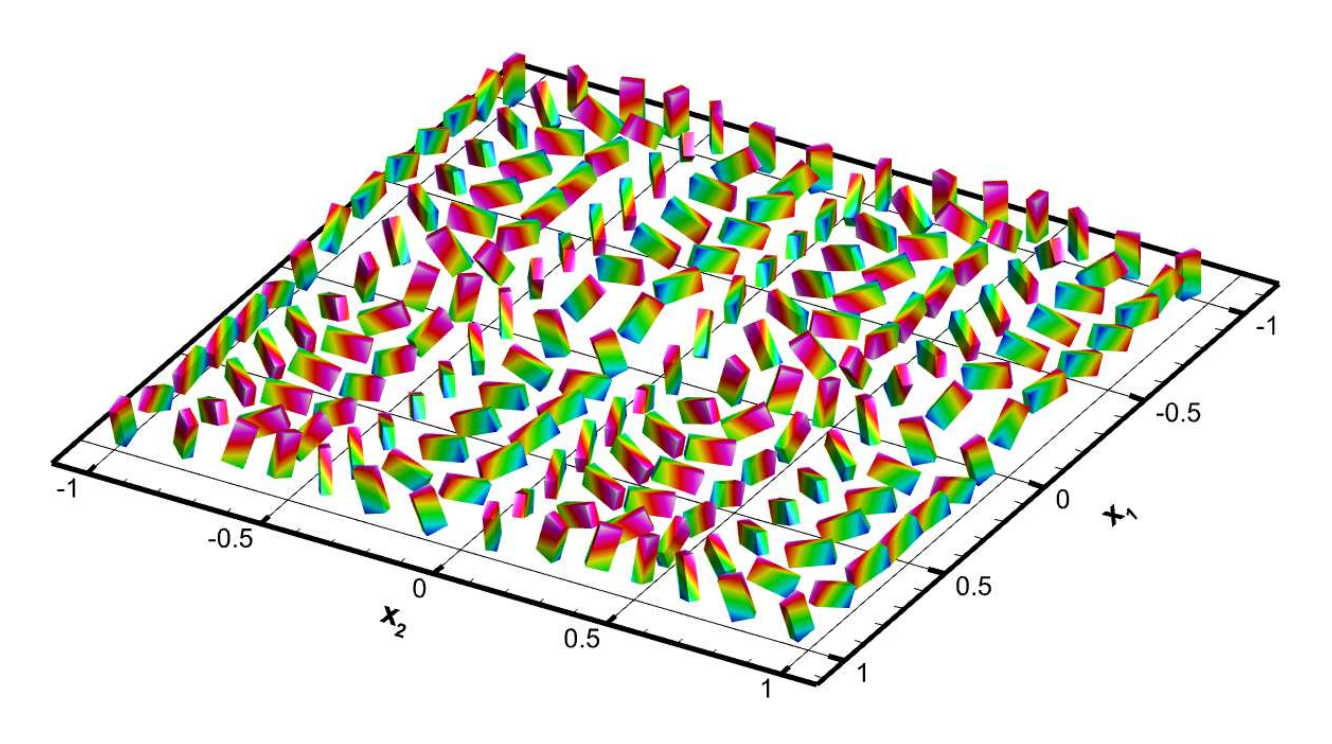}} 
 \subfigure[Final profile]{  \includegraphics[scale=.33] {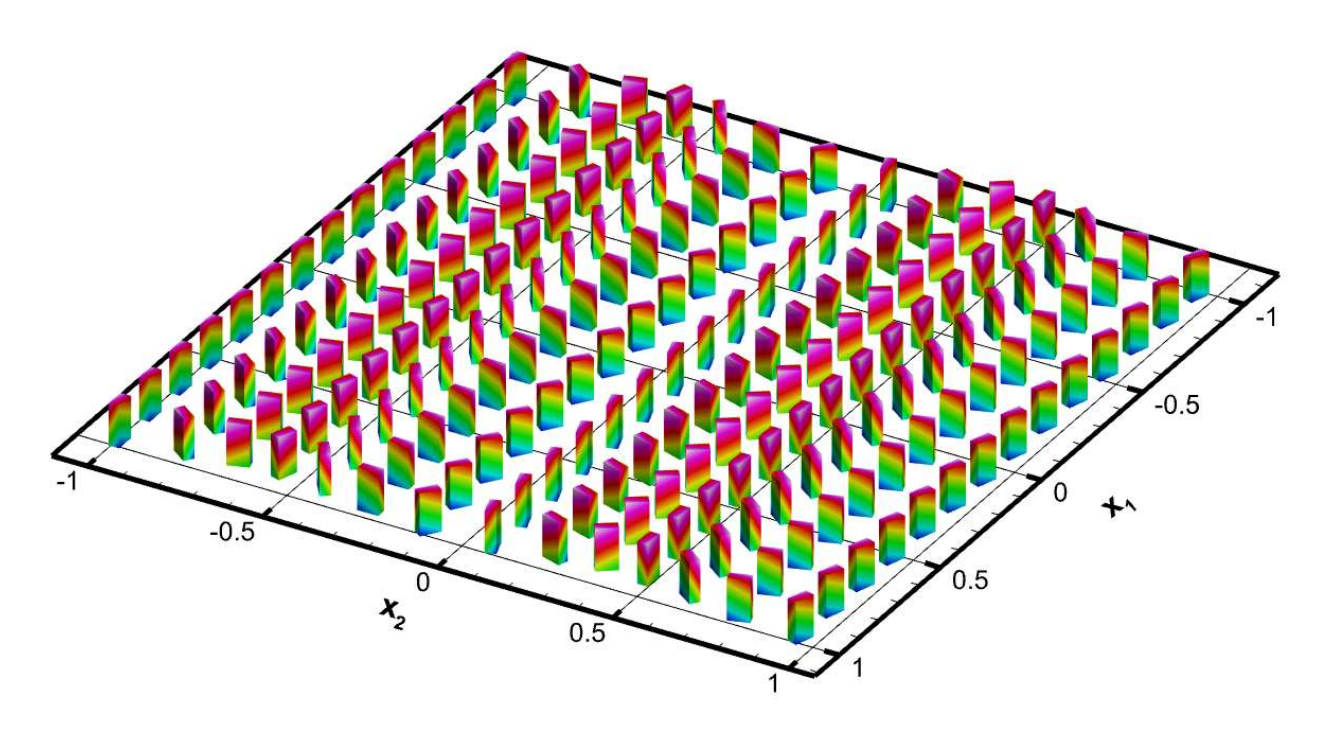}}  \\
      \subfigure[Energy vs Time]{  \includegraphics[scale=.48]{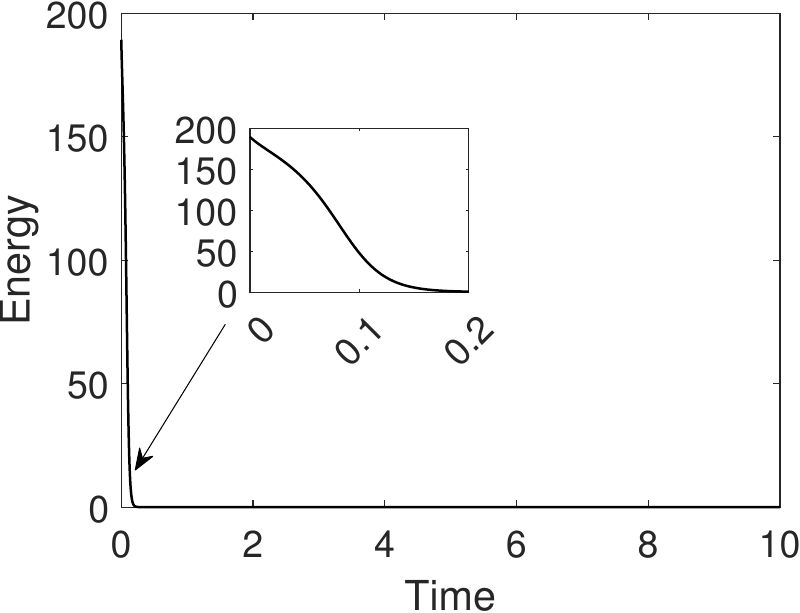}} \quad
  \subfigure[Orthonormal frame error vs Time]{  \includegraphics[scale=.48]{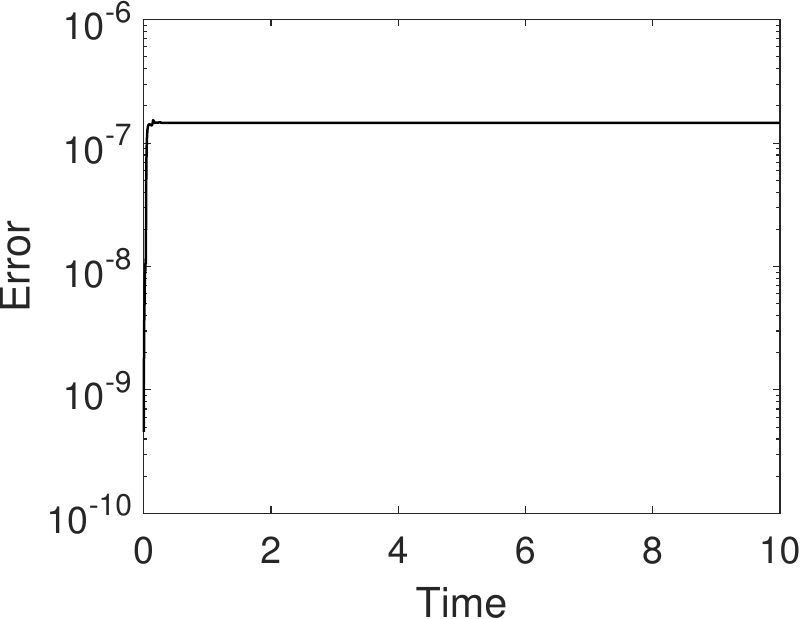}
  }\\
  \subfigure[Step size vs Time]{  \includegraphics[scale=.48]{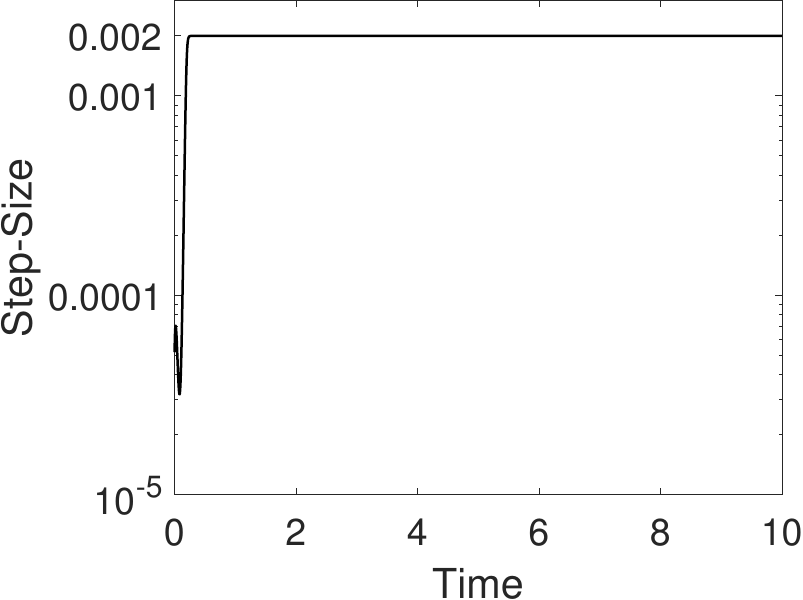}} \quad \quad \quad 
  \subfigure[Computational cost vs Time]{  \includegraphics[scale=.48]{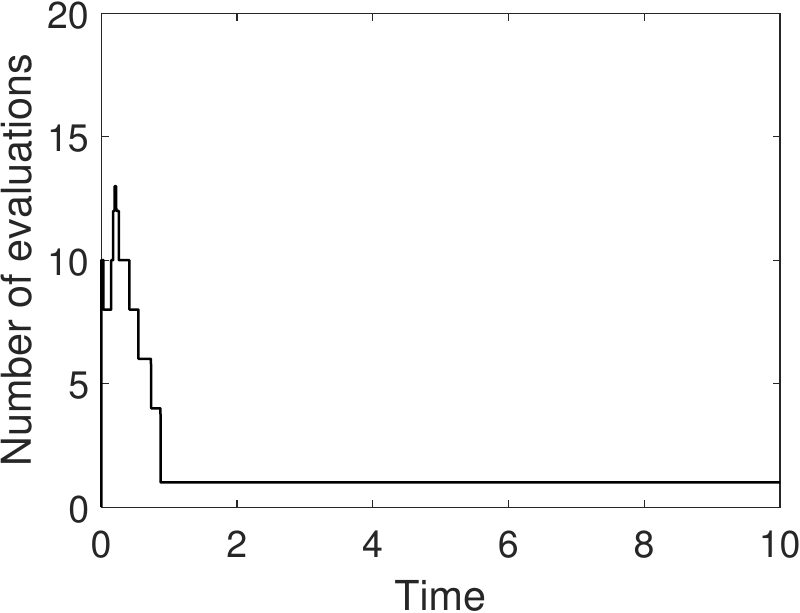}}	
  \caption{\small Evolution of the orthonormal frame field $\mathbf{p}$ with initial frame field in equation \eqref{eq:utest1} and the elastic coefficients $\hat{\bs K}=(1, 0, 0, 1, 0, 0, 1, 0, 0, 1, 0, 0)^{\intercal}$. (a) Initial frame field distribution of equation \eqref{eq:utest1}; (b) profile of the frame field at equilibrium plotted at $t=10$;  (c) time history of energy $\mathcal{F}_{B i}[\mathbf{p}]$; (d) time history of orthonormal frame error measured in $L^{\infty}$-norm; (e) time history of adaptive step size; (f) time history of the number of residual evaluations in the INK solver at each time step.}  
  \label{figs: property-test1}
  \end{center} 
\end{figure}
\begin{figure}[tbp]
  \begin{center}  
  \subfigure[Energy vs Time]{  \includegraphics[scale=.48] {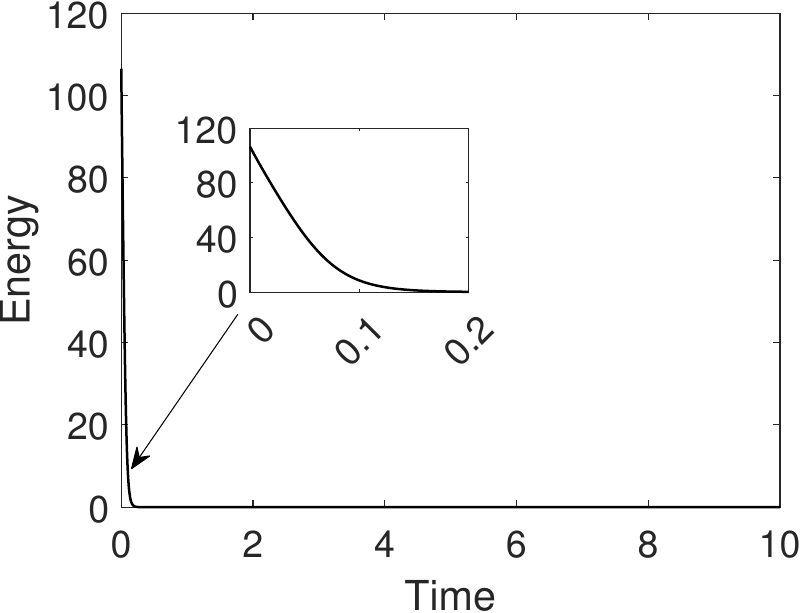}}  
  \subfigure[Final profile]{  \includegraphics[scale=.33] {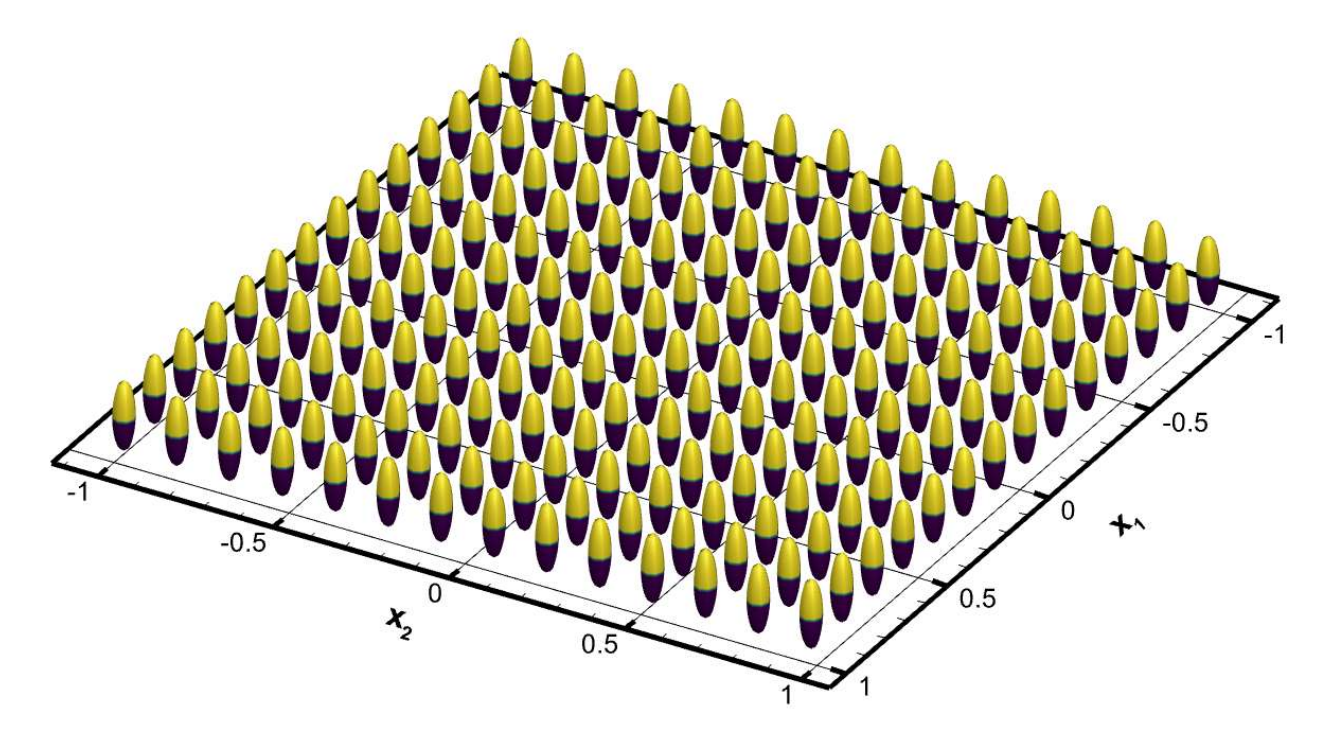}}  
  \caption{\small  Evolution of the director field with initial field distribution equals to $\bs n_1$ in equation \eqref{eq:utest1} with $K_1=K_4=K_7=K_{10}=1$ for uniaxial Oseen-Frank in equation \eqref{eq:oFrank}. (a) Time history of Oseen-Frank energy; (b) the profile of director field at equilibrium at $t=10$.}  
  \label{figs: uniaxial}
  \end{center} 
\end{figure}
Figure \ref{figs: property-test1} (a) depicts a side view (with azimuth $=120^{\circ}$, elevation $=30^{\circ}$ ) of the initial frame field distribution, visualized by colored bricks.  In Figure \ref{figs: property-test1} (c), we plot the history of energy $\mathcal{F}_{B i}[\mathbf{p}]$ as a function of time and one can observe that the energy decays rapidly from $t=0$ to $t=0.2$ and remains 0 afterwards. We plot the equilibrium state of the frame field at $t=10$ in Figure \ref{figs: property-test1} (b).  It can be seen that only the long axis representing $\bs n_1$ turn to be homogeneous, while $\bs n_2$ and $\bs n_3$ are not.
This agrees with the degenerate elastic coefficients we choose, since inhomogeneity of $\bs n_2$ and $\bs n_3$ does not lead to excess energy. 
Figure \ref{figs: property-test1} (d) plots the maximum numerical error of all entries of $\mathbf{p}\mathbf{p}^{\intercal}-\mathbf{I}$ measured in point-wise $L^\infty$-norm.
It is observed that the error curves level off at around $10^{-7}$, showing that the orthonormality constraint is preserved, up to the stopping criterion of the INK solver for solving the discrete nonlinear equation.
We depict in Figure \ref{figs: property-test1} (e) the history of time step sizes by the proposed adaptivity strategy in equation \eqref{eq: adap}.
It shows that the step sizes change adaptively in accordance with the evolution of the frame field and the time step sizes equal $\tau_{\rm max}=2\times 10^{-3}$ for most of the time steps.
Moreover, we depict in Figure \ref{figs: property-test1} (f) the time history of the number of residual evaluations at each time step to quantify the computational cost of the proposed method. The largest number of residual evaluations is less than 13, which occur between $t=0$ to $t=0.2$ to capture the rapid change of the orientation of frame field (see from Figure \ref{figs: property-test1} (c) for the rapid change in energy) and remains small for the simulations. This clearly demonstrates that when combined with the time-adaptive strategy, the nonlinear equation resulted from gRdg scheme can be solved with high efficiency. 

Since the elastic coefficients are denegerate, we would like to examine whether the above simulation result is consistent with the uniaxial Oseen-Frank gradient flow.
For comparison, we conduct the simulation for uniaxial Oseen-Frank gradient flow system with the initial distribution of the director field be $\bs n_1$ of equation \eqref{eq:utest1}.
In Figure \ref{figs: uniaxial}, we present the energy history and the equilibrium state, where we use colored rods to represent the uniaxial director. 
The energy curve is indeed identical to Figure \ref{figs: property-test1} (a), and the equilibrium director field is homogeneous along the $x_3$ direction.
Since the rotation on $\bs n_2$ and $\bs n_3$ would not contribute to the energy functional for this degenerate case, the solution to the gRdg scheme for biaxial frame model would not be unique, which poses challenges on numerical simulation. Nevertheless, it turns out that the gRdg scheme results in efficient and accurate computations without any difficulty.

In the next example, we keep the elastic coefficients $\hat{\bs K}$ but alter the initial distribution of the frame field $\mathbf{p}$ to
\begin{equation}\label{eq:utest2}
	\begin{aligned}
		\bs n_{1}(\bs x,t)=(&\sin{(\pi x_1+2\cos(\pi x_2))}, 0,\cos{(\pi x_1+2\cos(\pi x_2))})^{\intercal},  \\
		\bs n_{2}(\bs x,t)=(&\cos{(\pi x_1+2\cos(\pi x_2))}, 0, -\sin{(\pi x_1+2\cos(\pi x_2))})^{\intercal}, \\
		\bs n_{3}(\bs x,t)=(&0, 1, 0)^{\intercal}.
	\end{aligned}
\end{equation}
It can be observed from Figure \ref{figs: property-testeq3.4} that though the elastic coefficients $\hat{\bs K}$ are kept the same, a different choice of the initial distribution (see Figure \ref{figs: property-testeq3.4} (a)) of the frame field leads to a distinct evolution process of the frame field.
In Figure \ref{figs: property-testeq3.4} (b), we depict the time history of the energy curve and a multi-stage evolution character can be observed, presented in \ref{figs: property-testeq3.4} (e)-(h) for stages A to D. 
At stage A (\ref{figs: property-testeq3.4} (e)), the energy decays rapidly, and the frame field turns to be more uniform along $x_2$-direction.
Then as Figure \ref{figs: property-testeq3.4} (f) plots, the frame field becomes uniform along $x_3$-direction and rotates in the $x_1$-$x_3$ plane in stage B.
At stage C, the rotation in the $x_1$-$x_3$ plane is still on-going but the direction of $\bs n_1$ gradually tends to be homogeneous, as shown in Figure \ref{figs: property-testeq3.4} (g).
Figure \ref{figs: property-testeq3.4} (h) depicts the equilibrium stage and it shows that only the long axis representing $\bs n_1$ turn to be homogeneous,  which is similar with that of the previous example.
Moreover, we examine the efficiency of the proposed method by the time step sizes and the number of evaluations of the residual function (see Figure \ref{figs: property-testeq3.4} (c)-(d)). Again, the time step sizes remain at $\tau_{\rm max}=2\times 10^{-3}$ for most of the time steps and the number of residual evaluations are small, even between transition stages.

\begin{figure}[tbp]
  \begin{center}  
  \subfigure[Initial profile]{  \includegraphics[scale=.33] {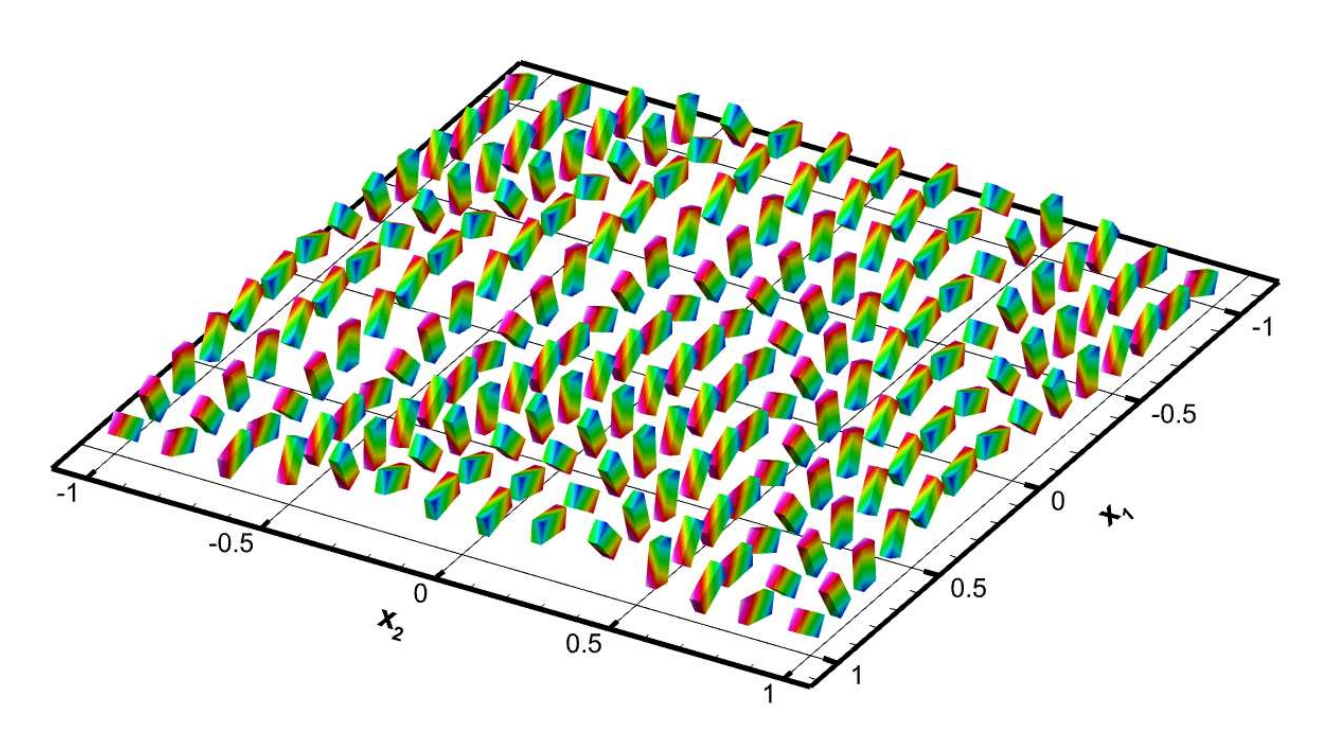}}  
  \subfigure[Energy vs Time]{  \includegraphics[scale=.42]{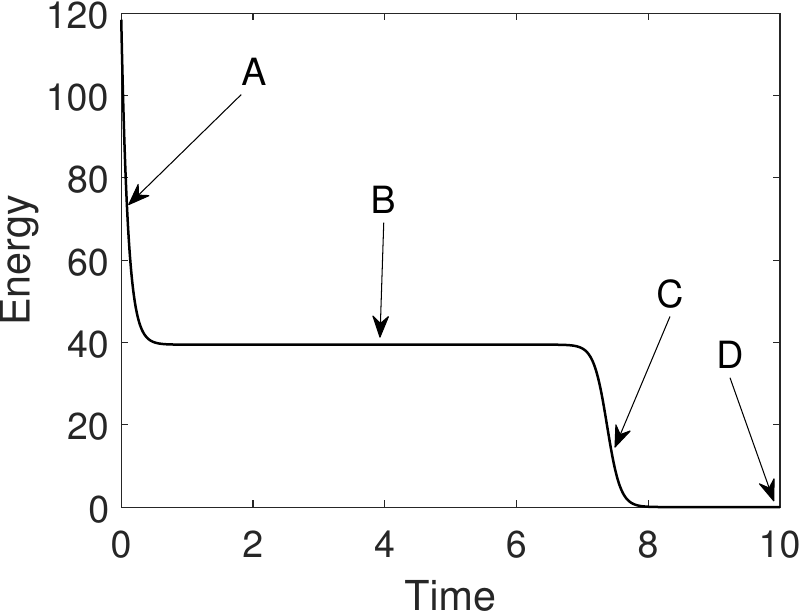}}  
  \subfigure[Step size vs Time]{  \includegraphics[scale=.42]{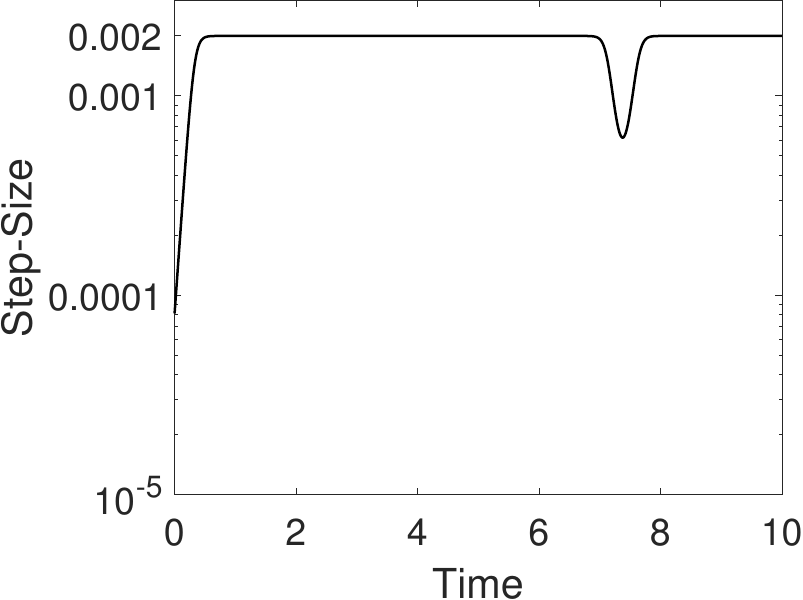}} \quad \quad \quad   \quad 
  \subfigure[Computational cost vs Time]{  \includegraphics[scale=.42]{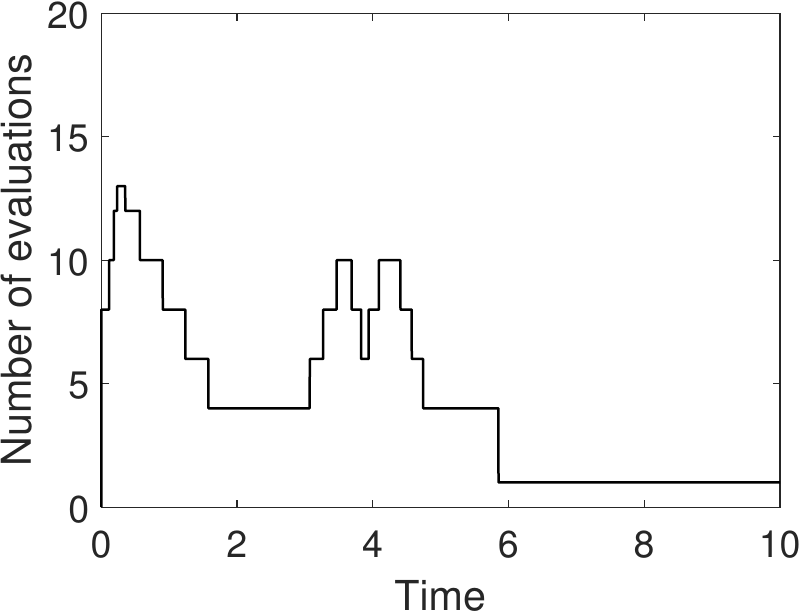}}
    \subfigure[$t=0.0828$]{  \includegraphics[scale=.33] {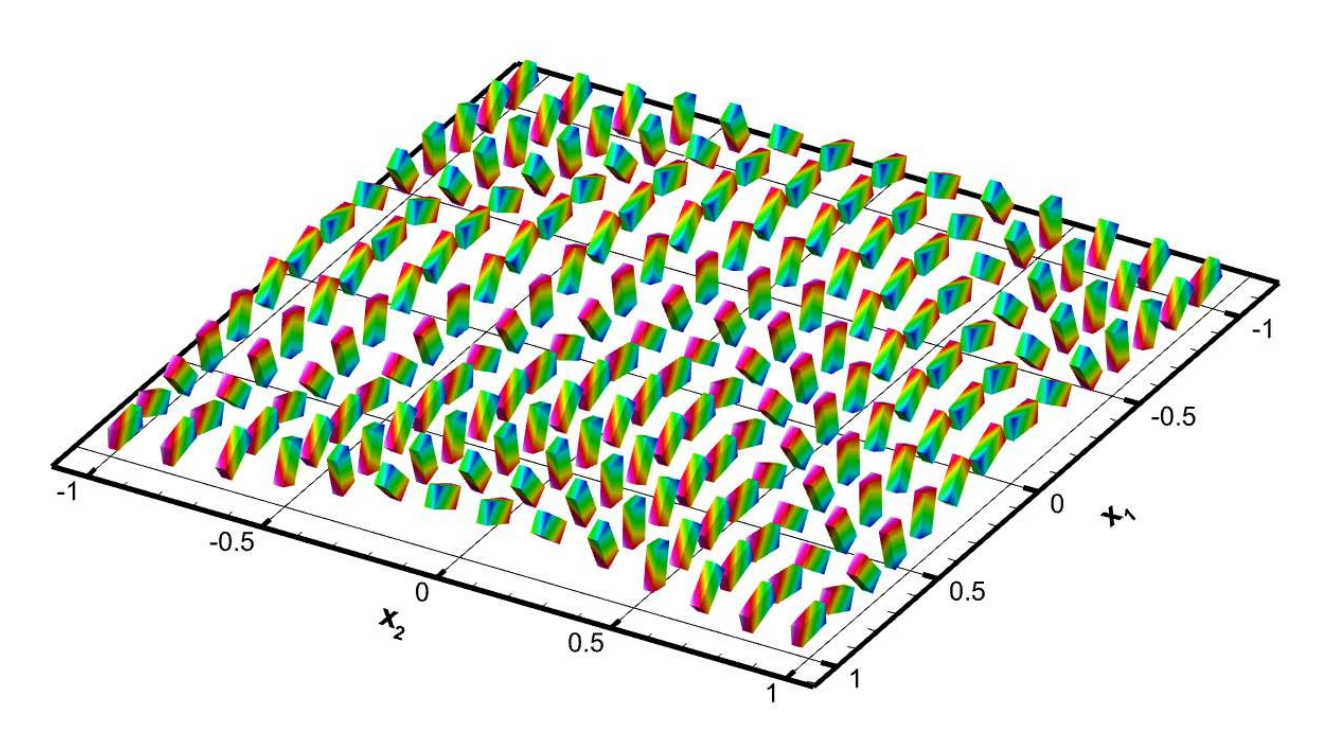}}  
  \subfigure[$t=4.0528$]{  \includegraphics[scale=.33] {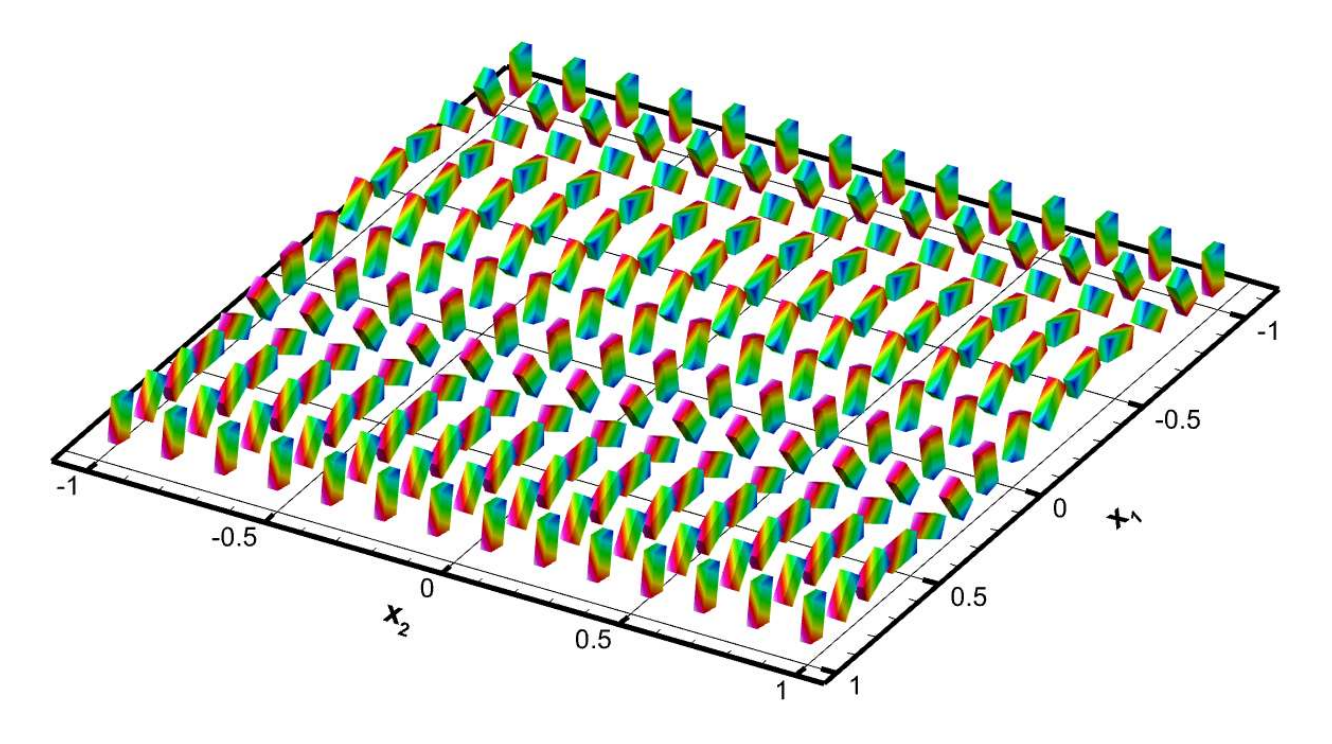}}  
  \subfigure[$t=7.4126$]{  \includegraphics[scale=.33] {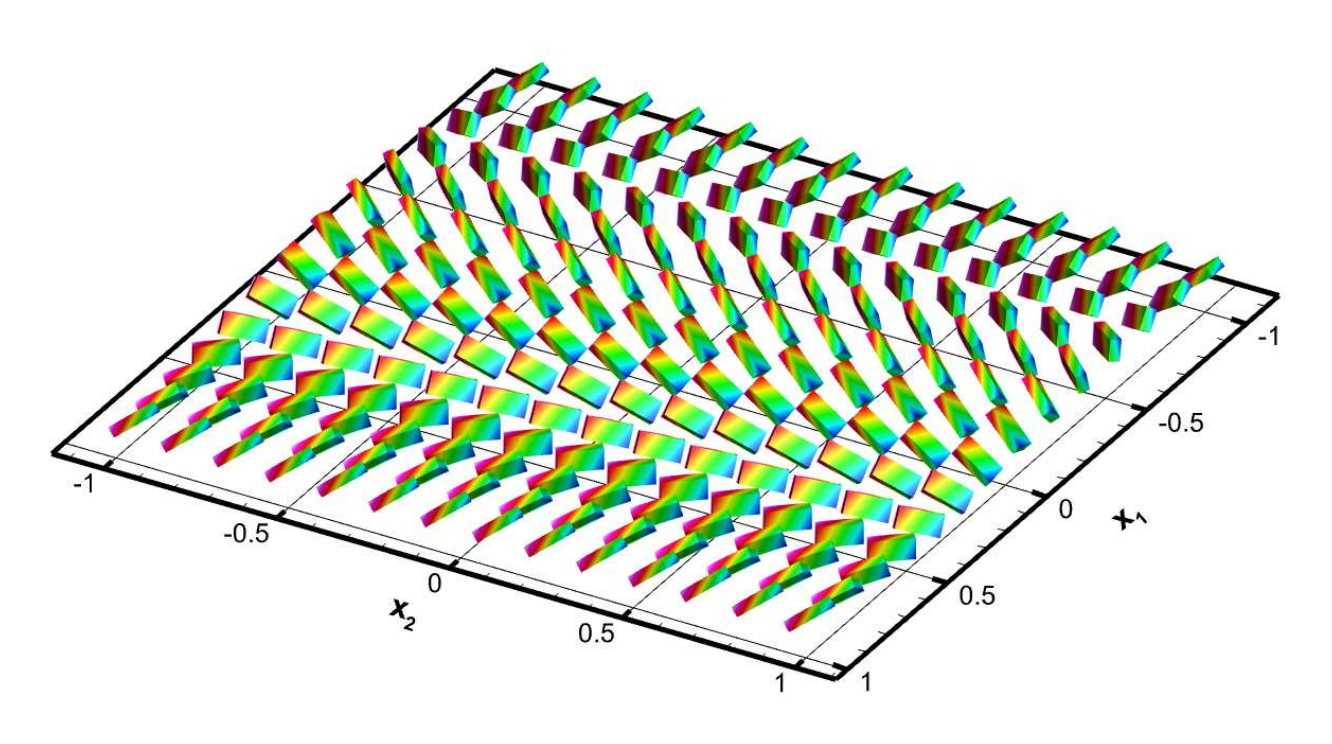}}  
  \subfigure[$t=10.1341$]{  \includegraphics[scale=.33] {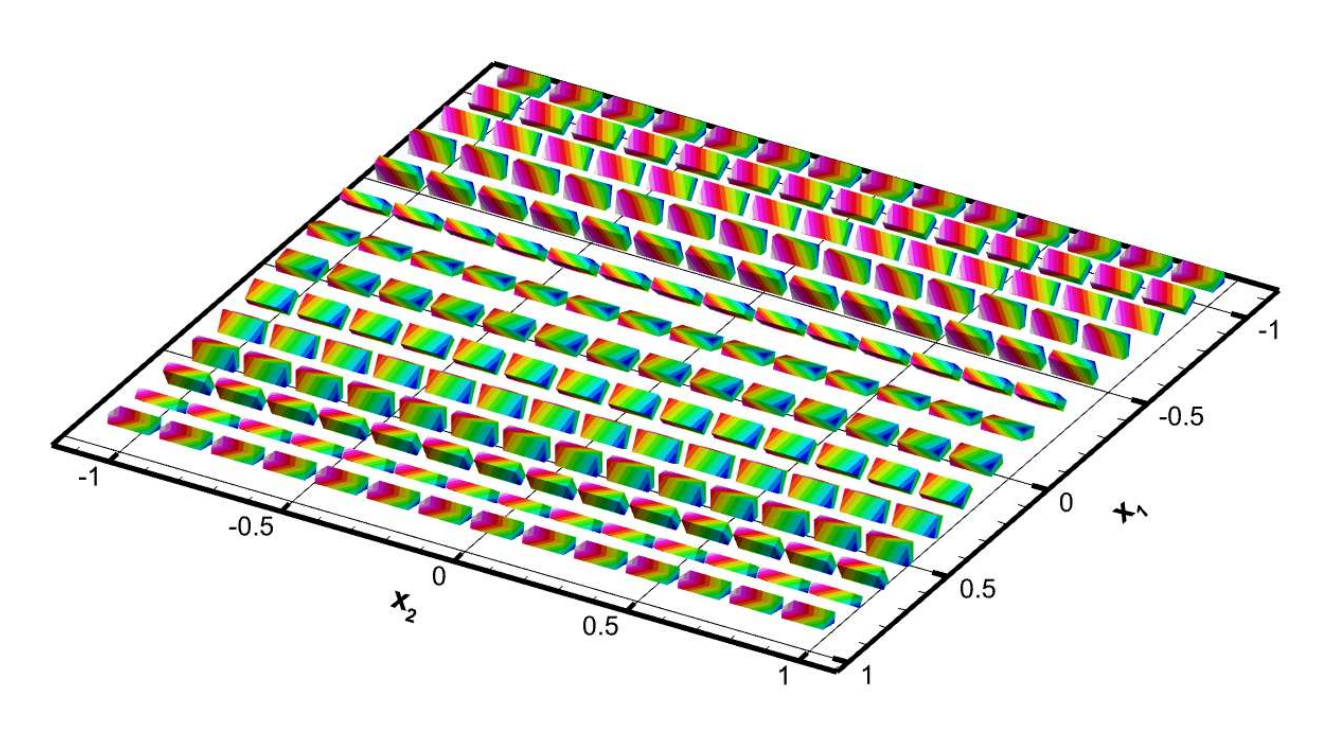}} 	
  \caption{\small Evolution of the orthonormal frame field $\mathbf{p}$ with initial frame field in equation \eqref{eq:utest2} and the elastic coefficients $\hat{\bs K}=(1, 0, 0, 1, 0, 0, 1, 0, 0, 1, 0, 0)^{\intercal}$. (a) Initial frame field distribution; (b) time history of energy $\mathcal{F}_{B i}[\mathbf{p}]$; (c) time history of adaptive step size; (d) time history of the number of residual evaluations in the INK solver at each time step; Time snapshots of frame field at (e) $t=0.0828$; (f) $t=4.0528$; (g)  $t=7.4126$; (h) $t=10.1341$.
  }  
  \label{figs: property-testeq3.4}
  \end{center} 
\end{figure}

We then fix the initial field distribution defined in equation \eqref{eq:utest1} and use the elastic coefficients $\hat{\bs K}=(1, 1, 1, 1, 1, 1, 1, 1, 1, 1, 1, 1)^{\intercal}$. 
In Figure \ref{figs: property-test2snap} (e), one can observe that the energy first decays rapidly at state A, and keep relatively stable at state B, then decays rapidly again at state C and keeps almost zero at state D. In accordance with the decay of the energy curve, we depict in Figure \ref{figs: property-test2snap} (a)-(d) the profiles of the frame field at these four different stages.
Figure \ref{figs: property-test2snap} (a) plots the snapshot at stage A at $t=0.0723$, indicating that the direction of $\bs n_1$  tends towards the $x_3$-direction.
Figure \ref{figs: property-test2snap} (b) plots the snapshot at $t=3.0445$.
We observe that it looks quite the same with the equilibrium stage in Figure \ref{figs: property-test1} (b), where the direction of $\bs n_1$ are homogeneous while the directions of $\bs n_2$ and $\bs n_3$ are still nonuniform.
Figure \ref{figs: property-test2snap} (c) plots the snapshot at $t=3.8908$, where the directions of $\bs n_2$ and $\bs n_3$ are still nonuniform along $x_1$-direction and the directions of $\bs n_1$ are almost lying on the $x_1-x_2$ plane.
Figure \ref{figs: property-test2snap} (d) plots the snapshot of equilibrium stage D, where the frame field is uniformly in every direction.
We plot in Figure \ref{figs: property-test2snap} (f) the maximum numerical error of all entries of $\mathbf{p}\mathbf{p}^{\intercal}-\mathbf{I}$ measured in point-wise $L^\infty$-norm and find that the error curves level off between $10^{-6}$ and $10^{-7}$, demonstrating the preservation of orthonormality constraint up to the tolerance of the INK solver.

\begin{figure}[tbp]
  \begin{center}  
    \subfigure[$t=0.0723$]{  \includegraphics[scale=.33] {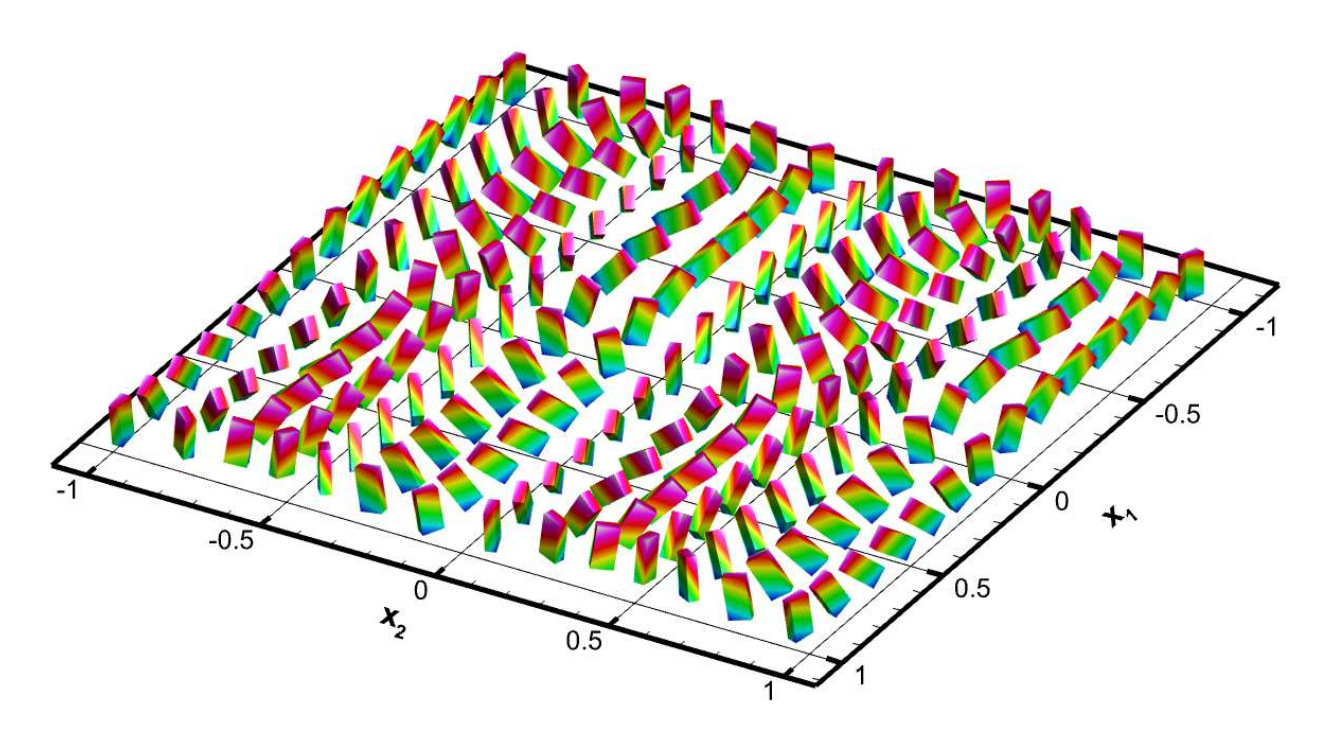}}  
    \subfigure[$t=3.0445$]{  \includegraphics[scale=.33] {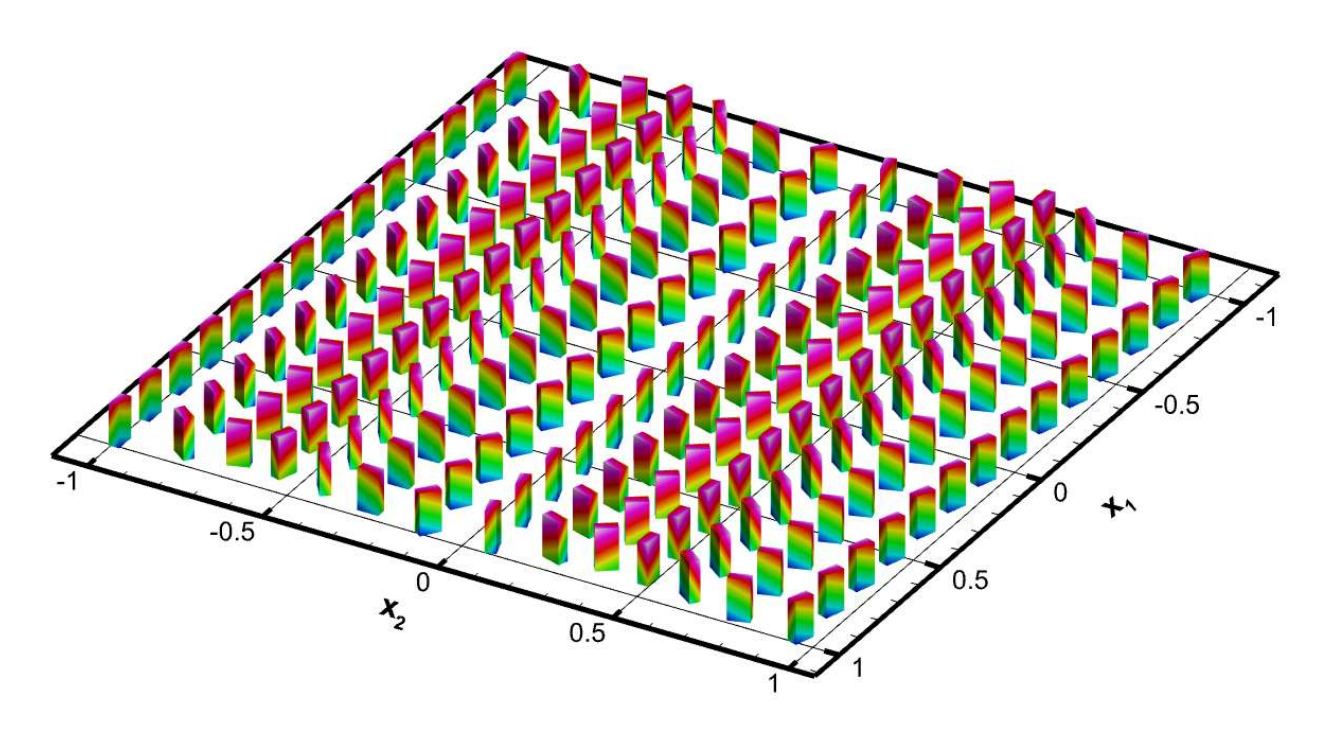}}  
    \subfigure[$t=3.8908$]{  \includegraphics[scale=.33] {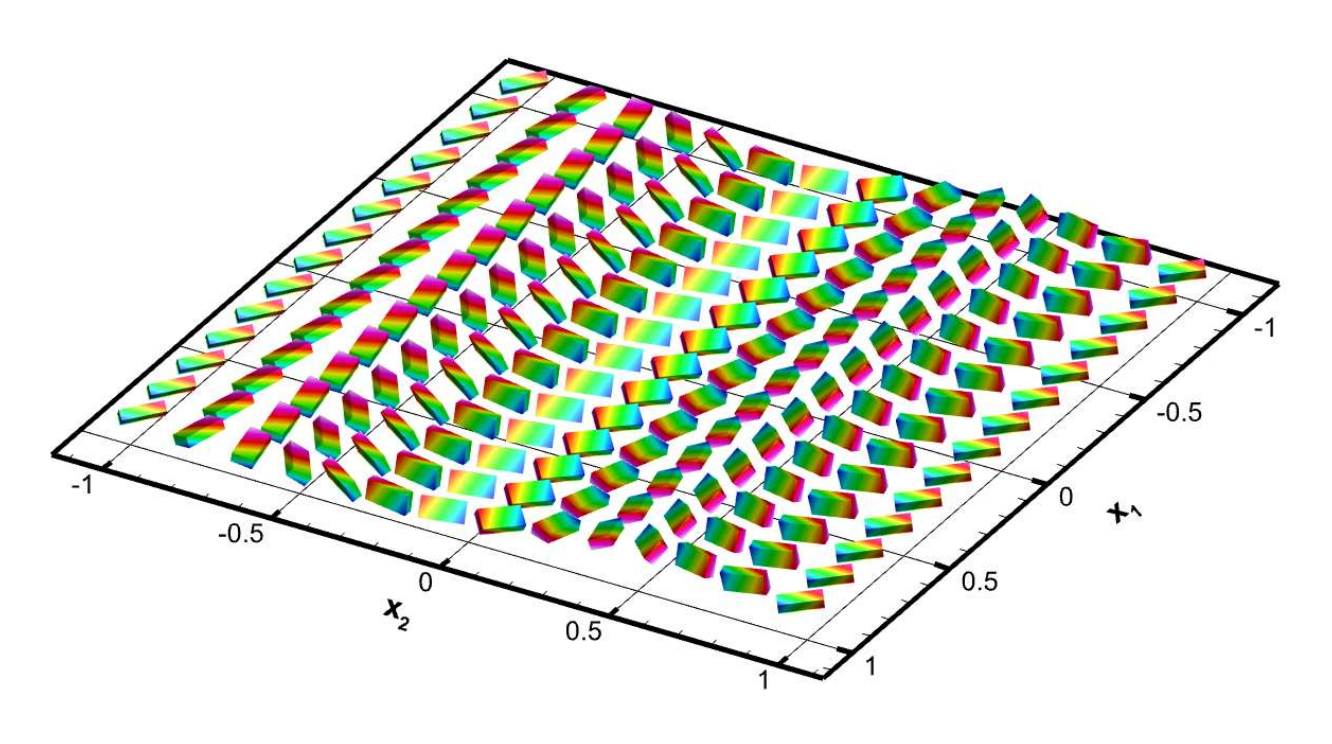}}  
    \subfigure[$t=9.8209$]{  \includegraphics[scale=.33] {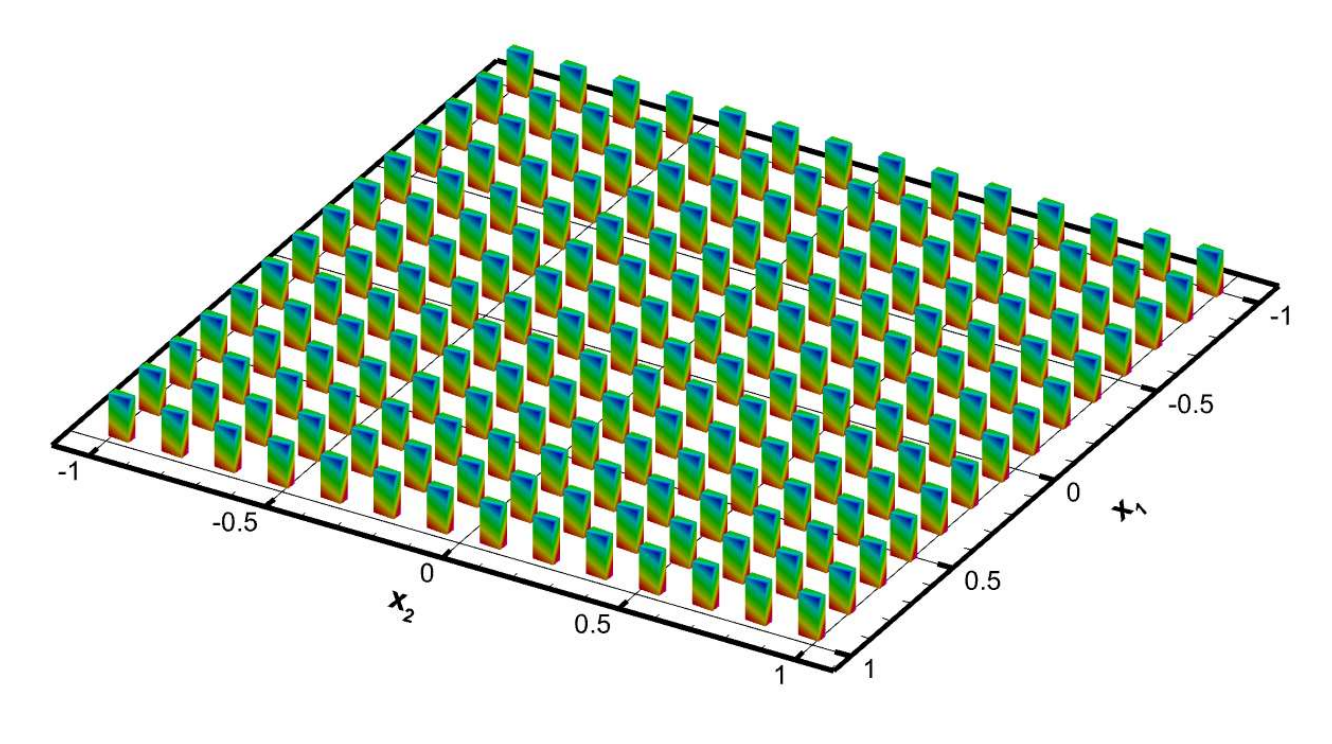}}
    \subfigure[Energy vs Time]{  \includegraphics[scale=.48]{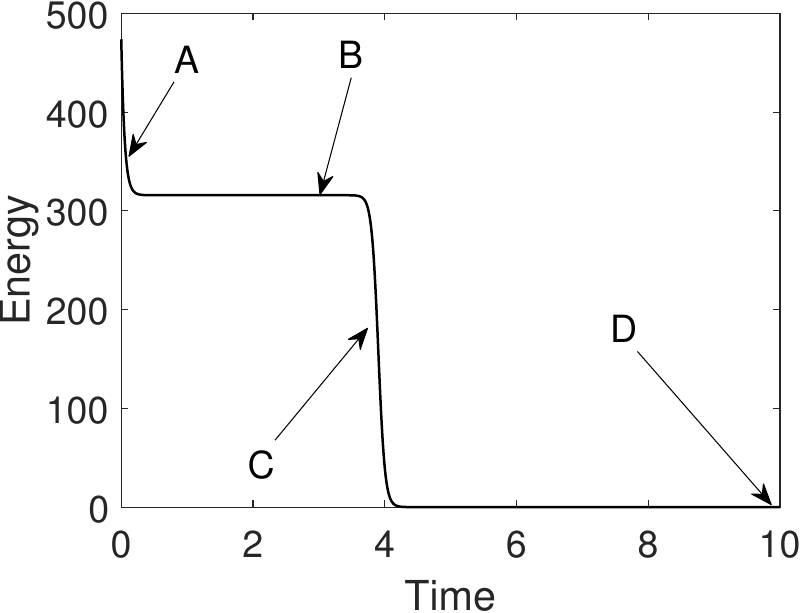}} \quad 
  \subfigure[Error vs Time]{  \includegraphics[scale=.48]{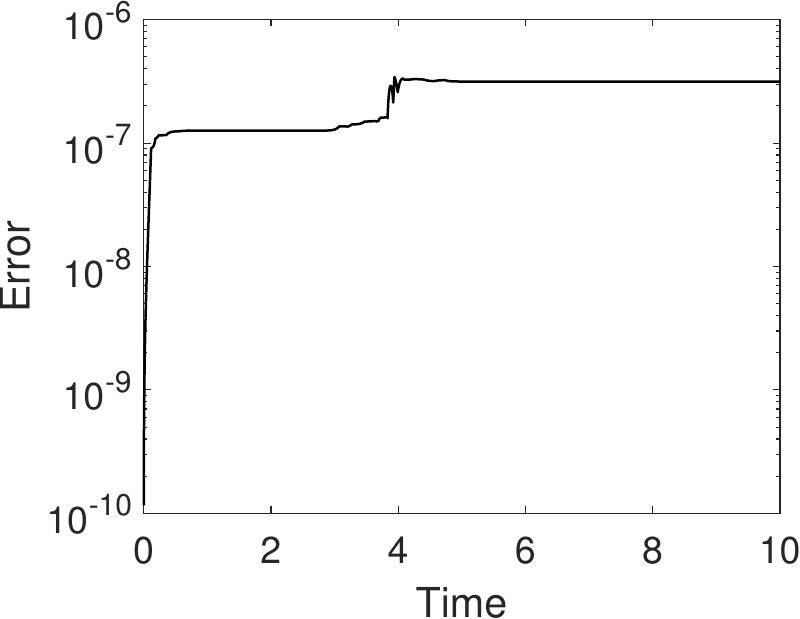}}
    \caption{\small Evolution of the orthonormal frame field $\mathbf{p}$ with initial frame field distribution in equation \eqref{eq:utest1} and the elastic coefficients $\hat{\bs K}=(1,1,1,1, 1,1,1,1,1,1,1,1)^{\intercal}$. Time history of snapshots of the frame field at  (a) $t=0.0723$; (b) $t=3.0445$; (c)  $t=3.8908$; (d) $t=9.8209$; (e) time history of energy $\mathcal{F}_{B i}[\mathbf{p}]$; (f) time history of orthonormal frame error measured in point-wise $L^{\infty}$-norm.}     
    \label{figs: property-test2snap}
  \end{center} 
\end{figure}

\subsection{Dynamics against anisotropic elasticity}
In this subsection, we challenge our numerical algorithm with highly anisotropic elasticity and adopt the elastic coefficients to be 
\begin{equation*}
\hat{\bs K}= (0.05, 0.45, 3.75, 0.15, 0.35, 1.75, 5.55, 2.25, 3.955, 0.255, 1.955, 1.55)^{\intercal},
\end{equation*}
which is obtained from molecular parameters of bent-core molecules (see \cite{xu2018calculating}). In the following simulations, we enlarge the number of Fourier collocation points in one direction to $60$ and fix the rest of the simulation setting the same as above. 

Let us prescribe the initial profile of the frame field defined in the equation \eqref{eq:utest1}. We depict in Figure \ref{figs: bicase1} (g) the history of energy. It can be observed that  the energy  decays  rapidly in stage A; and keep relatively stable in stage B; then decays rapidly again in stage C; the rate of energy decay turns to be slower in stage D; then the energy decays dramatically again in stage E;  the  energy decays to nearly zero and keeps stable in stage F.  Figures \ref{figs: bicase1} (a)-(e) plot the snapshots of the profile of frame field in each stages. Figure \ref{figs: bicase1} (a) shows the snapshot of the frame field at $t=0.0426$, and one can see the direction of $\bs n_1$ tends towards the $x_3$-axis, compared with the initial profile. Figure \ref{figs: bicase1} (b) depicts the profile at $t=0.0425$, and it can be found that the trend that stage A has shown is more obvious in stage B. Then in stage C, such a trend does not keep, while there is a rotation around $x_3$-axis, see Figure \ref{figs: bicase1} (c). Figure \ref{figs: bicase1} (d) plots the profile at $t=0.8191$, and there is little difference from stage B and stage C. The snapshot at $t=0.9821$ is shown in Figure \ref{figs: bicase1} (e). One can observe that the direction turns to be nearly homogeneous in the $x_1$-axis. Finally, in Figure \ref{figs: bicase1} (f), we plot the frame field at $t=9.991$. The directions of $\bs n_1$, $\bs n_2$ and $\bs n_3$ are all homogeneous, and there is a clear angle between the direction of $\bs n_1$ and $x_1$-$x_2$ plane.
We also plot the history of adaptive time step size, and we can see that the step size is relatively small when energy decays rapidly and keeps as large as $\tau_{max}$ for most of the simulation, which demonstrates that the gRdg scheme using the proposed  time-adaptivity strategy is efficient for simulations of dynamics against anisotropic elasticity.

\begin{figure}[tbp]
  \begin{center}  
    \subfigure[$t=0.0426$]{  \includegraphics[scale=.33] {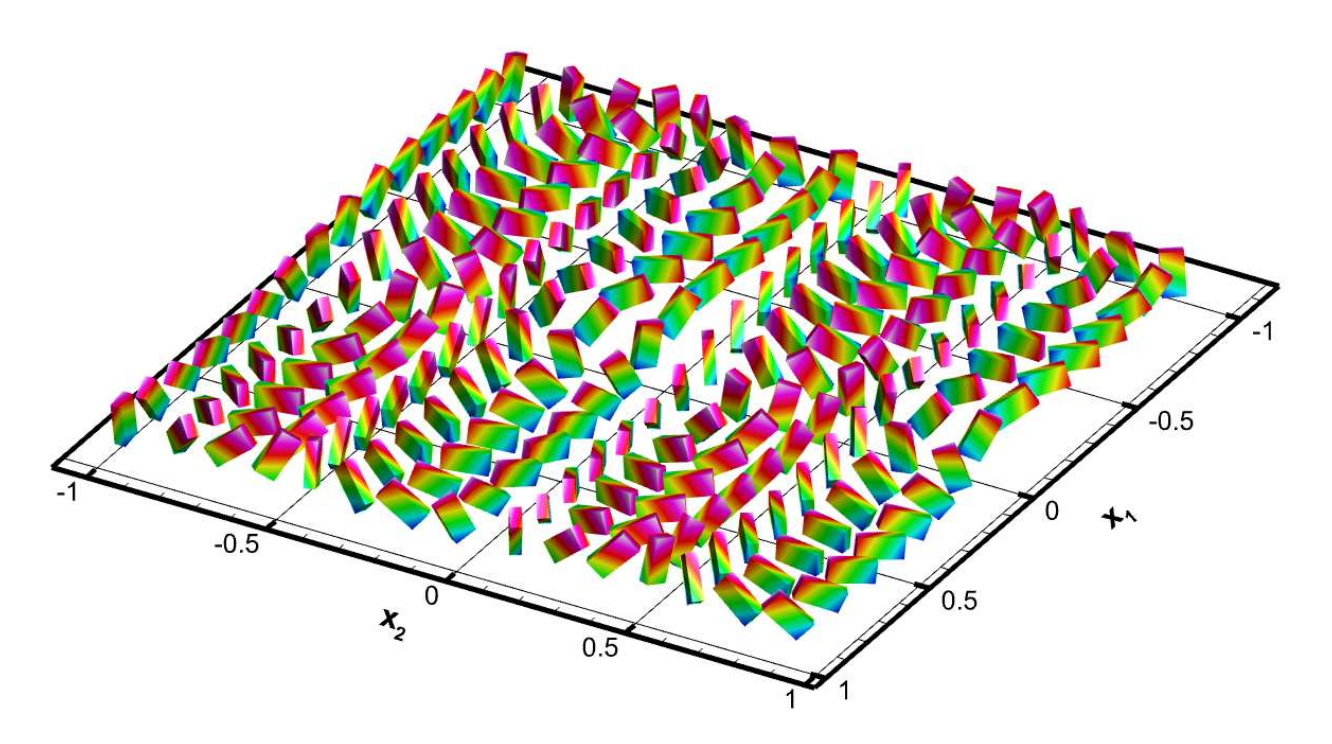}}  
    \subfigure[$t=0.4625$]{  \includegraphics[scale=.33] {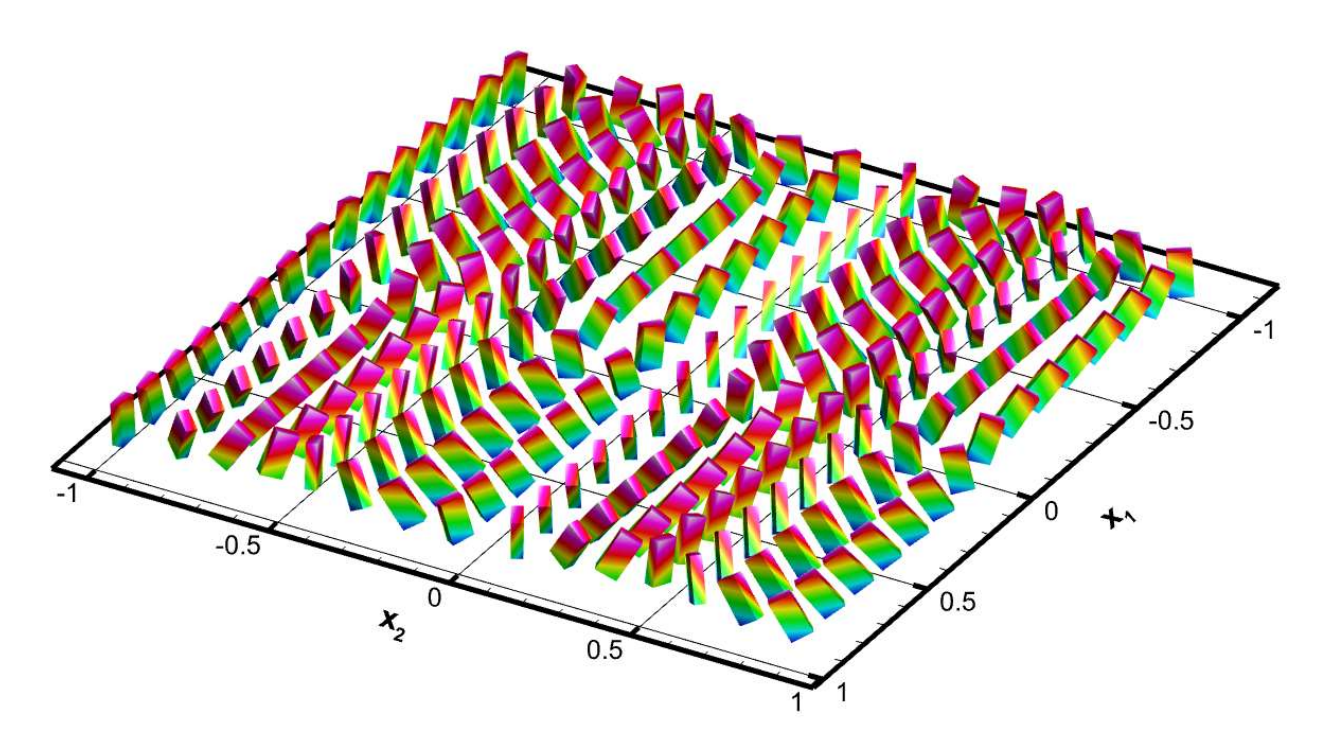}}  
    \subfigure[$t=0.7694$]{  \includegraphics[scale=.33] {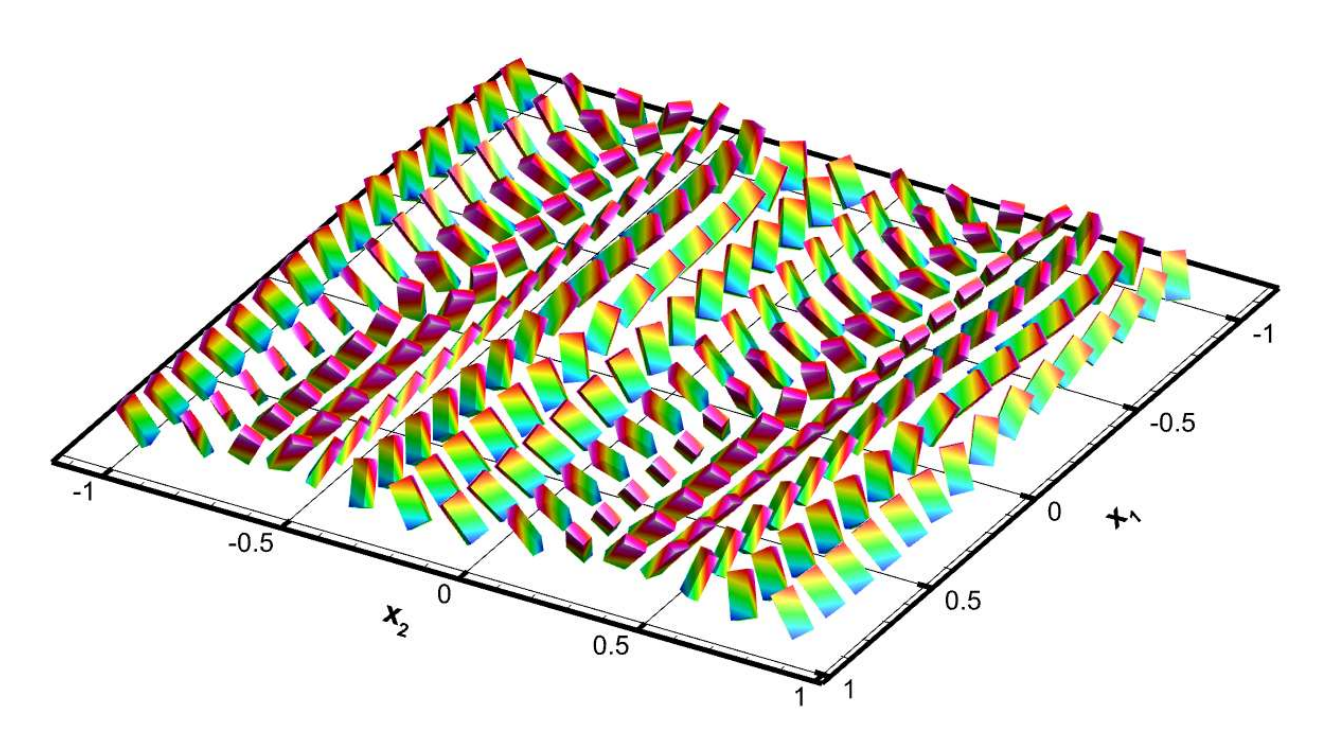}}  
    \subfigure[$t=0.8191$]{  \includegraphics[scale=.33] {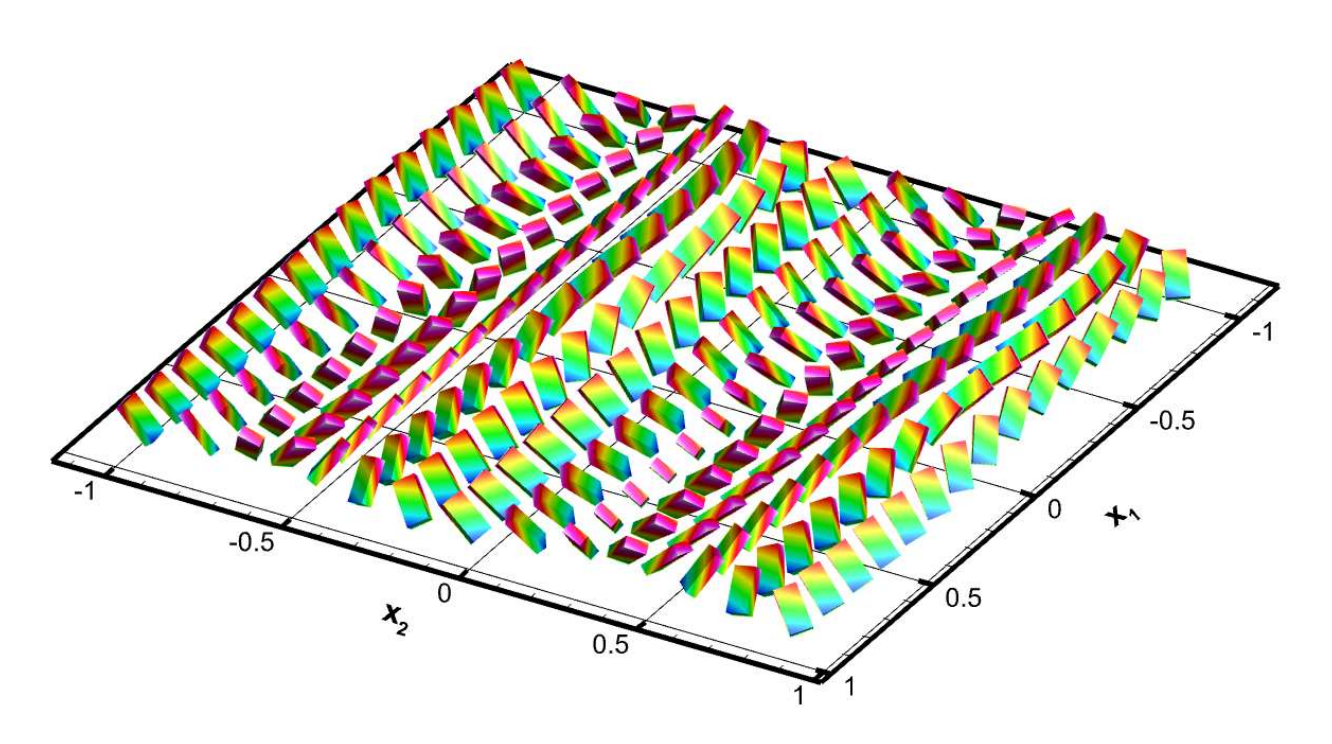}}
    \subfigure[$t=0.9821$]{  \includegraphics[scale=.33] {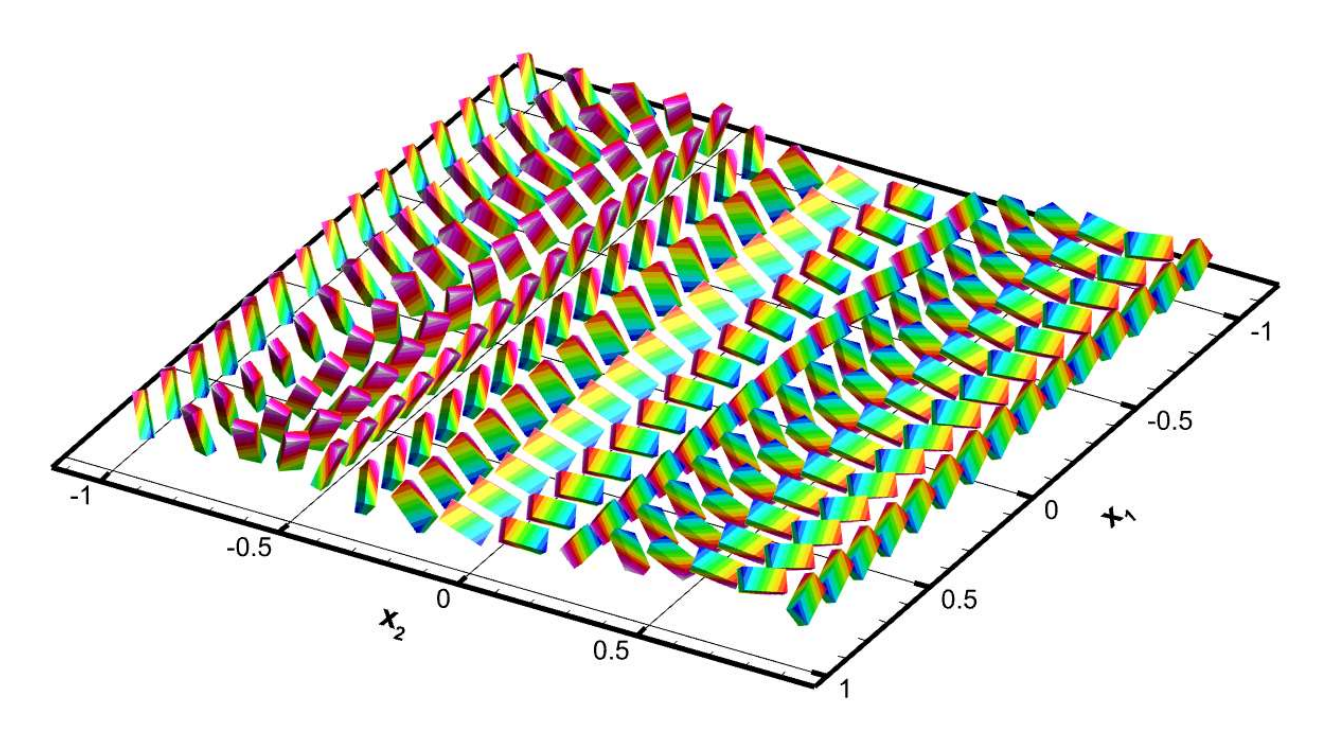}}  
    \subfigure[$t=9.991$]{  \includegraphics[scale=.33] {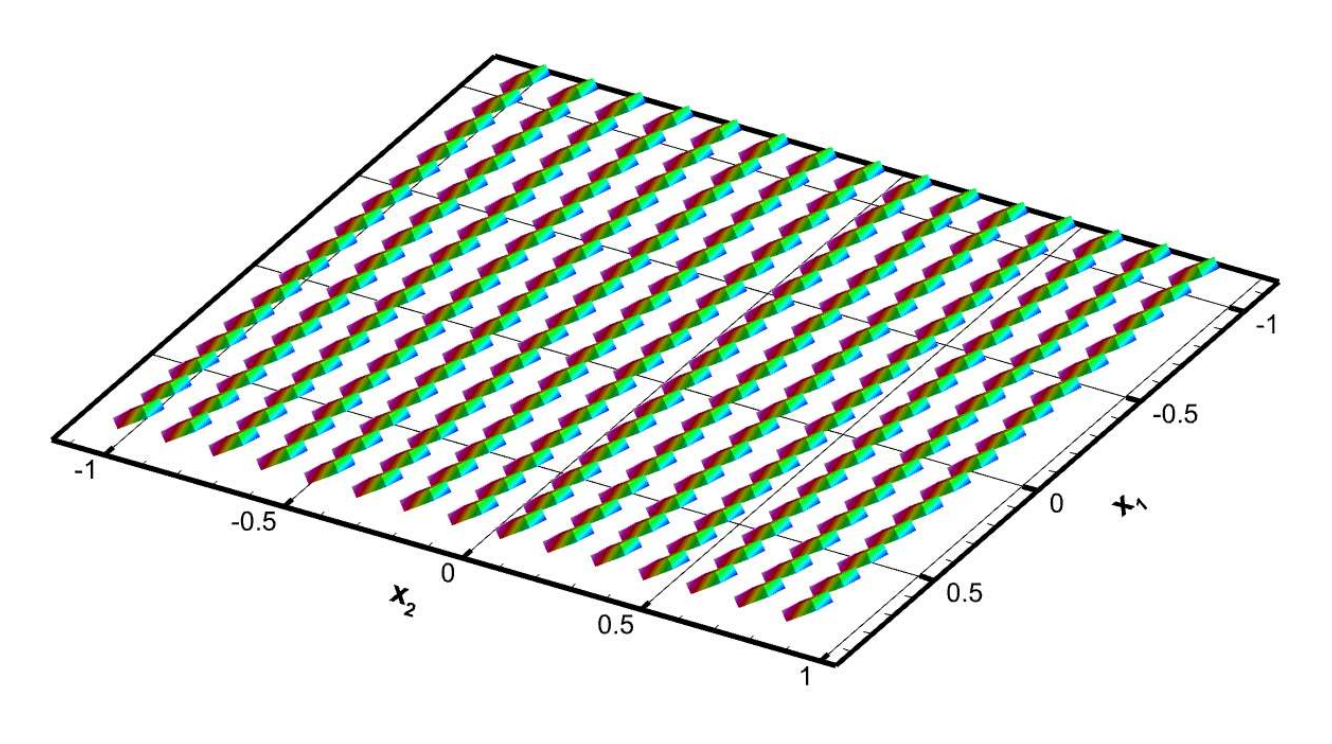}}
      \subfigure[Energy vs Time]{  \includegraphics[scale=.42]{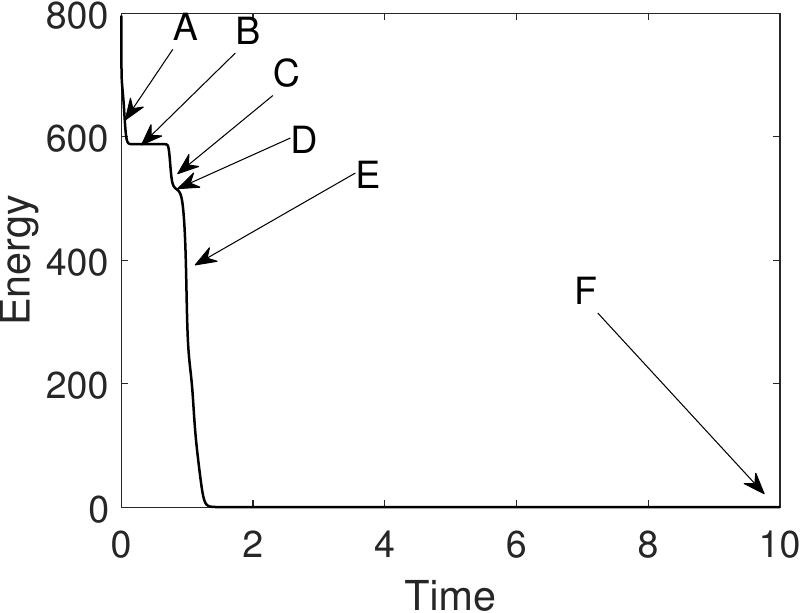}} \quad \quad \quad \
  \subfigure[Step size vs Time]{  \includegraphics[scale=.42]{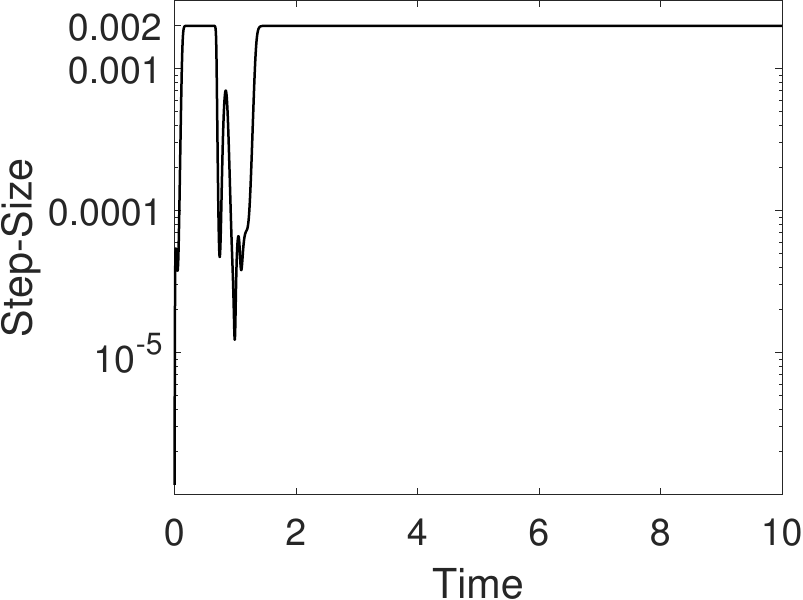}}
    \caption{\small Evolution of the orthonormal frame field $\mathbf{p}$ with initial frame field distribution \eqref{eq:utest1} and $\hat{\bs K}=(0.05, 0.45, 3.75, 0.15, 0.35, 1.75, 5.55, 2.25, 3.955, 0.255, 1.955, 1.55)^{\intercal}$. Time history of snapshots of the frame field at (a) $t=0.0426$, (b) $t=0.4625$, (c)  $t=0.7964$; (d) $t=0.8191$; (e)  $t=0.9821$; (f) $t=9.991;$ (g) time history of energy $\mathcal{F}_{B i}[\mathbf{p}]$; (h) time history of adaptive step size.
}     
    \label{figs: bicase1}
  \end{center} 
\end{figure}

Then, we take the initial profile of frame field defined in equation \eqref{eq:utest2}.
Figure \ref{figs: bicase2} (c) shows the time history of energy, and one can observe that it decays rapidly for a while and then keeps stable. In fact, we have continued  calculation until $t=300$ but only to see that this stable stage still remains.
Figure \ref{figs: bicase2}  (a) and (b) depict the snapshots of these two stages, and there is only a slight rotation in the $x_1$-$x_3$ plane.
We further introduce a  perturbation to the profile of the frame field by rotating it with $\pi/2$ angle around $x_3$-axis within the region $x_1^2+x_2^2 < 0.04$.
We depict in Figure \ref{figs: bicase2} (e) the snapshot of simulation, initialized with the perturbation. Noticeable differences in the central region can be found, when compared with the profile in Figure \ref{figs: bicase2} (b).
The time history of energy after perturbation  is shown in Figure \ref{figs: bicase2} (d), and one can see that the energy is quite larger than the energy before perturbation. One can also find that the energy decays rapidly into stable stage again and the energy of this new stable stage is almost equal to the energy of stable stage without perturbation.
The associated frame field is plotted in Figure \ref{figs: bicase2} (f), which is almost the same with Figure \ref{figs: bicase2} (b). Several different perturbations have been examined but they all lead to the same stable stage B in Figure \ref{figs: bicase2} (c), which implies that stage B is a local minimizer.
It would be an interesting problem to investigate what kind of local minimizers like the above one can be found, which calls for further studies. 

\begin{figure}[tbp]
  \begin{center}  
  \subfigure[$t=0.0192$]{  \includegraphics[scale=.33]{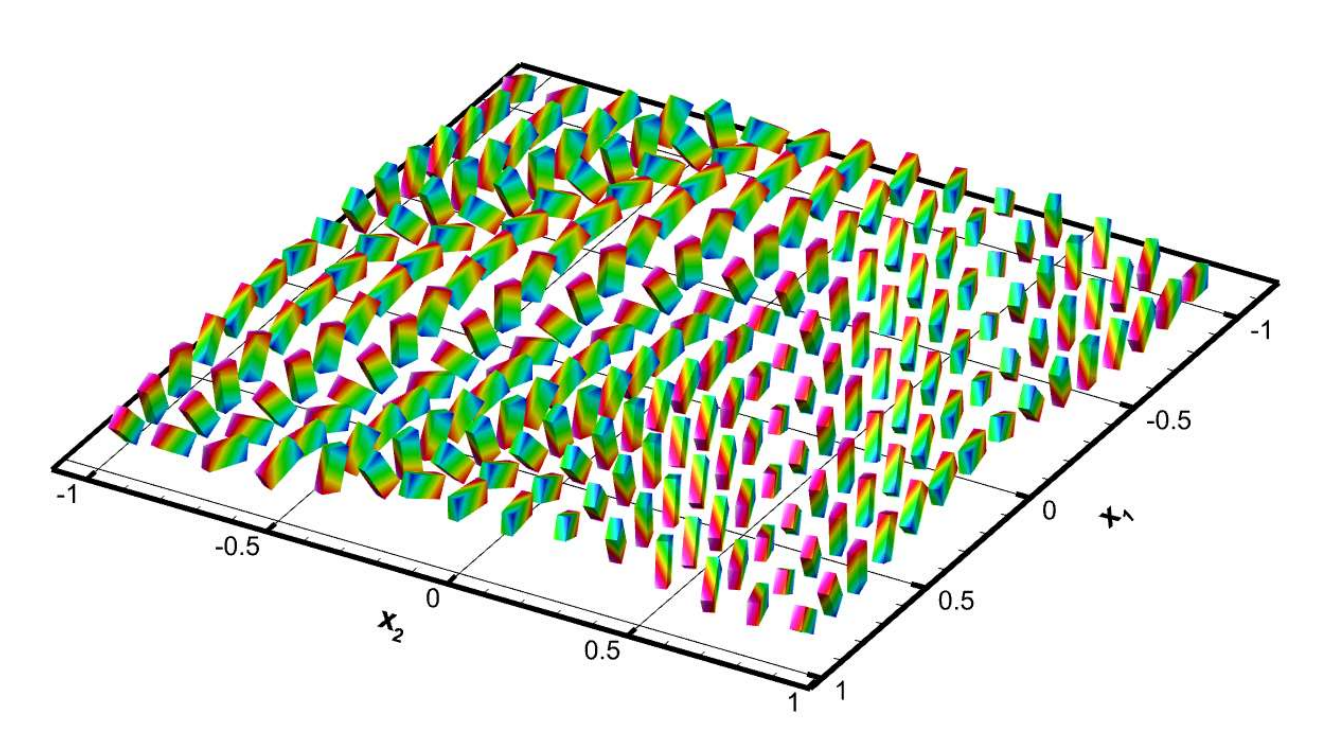}}
  \subfigure[$t=9.9822$]{  \includegraphics[scale=.33]{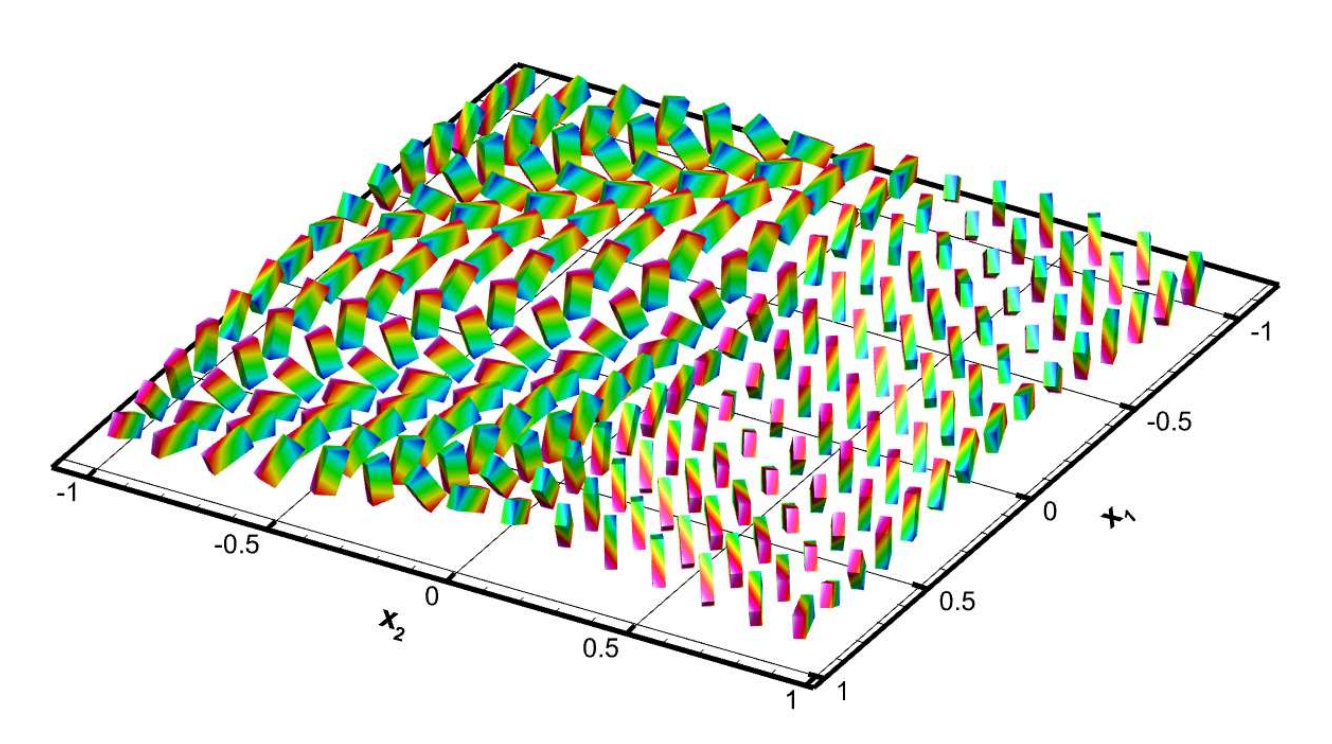}}
    \subfigure[Energy vs Time]{  \includegraphics[scale=.42]{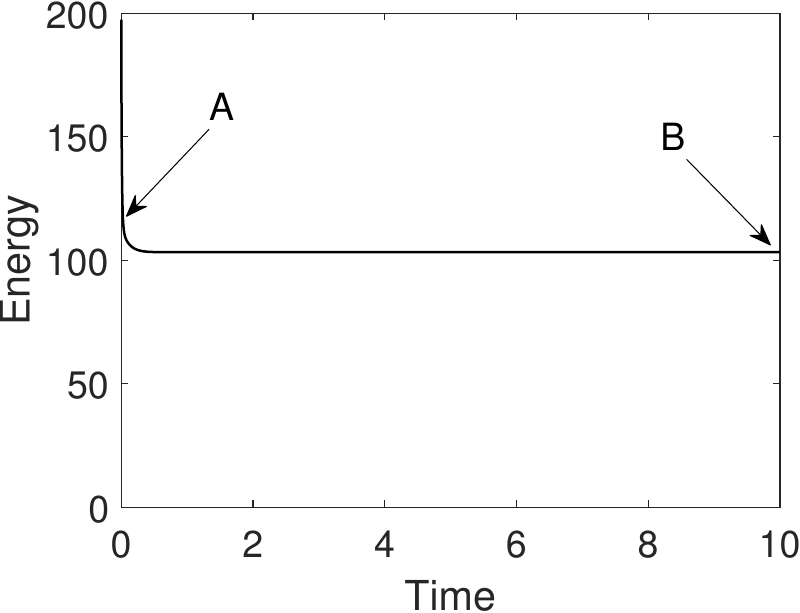}} \quad \quad \quad \
  \subfigure[Energy vs Time with Perturbation]{  \includegraphics[scale=.42]{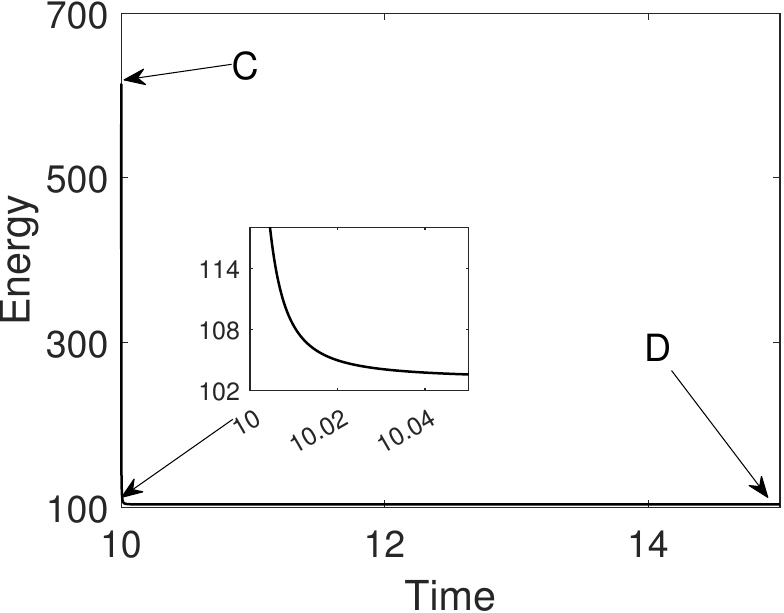}}
  \subfigure[$t=10.000$]{  \includegraphics[scale=.33]{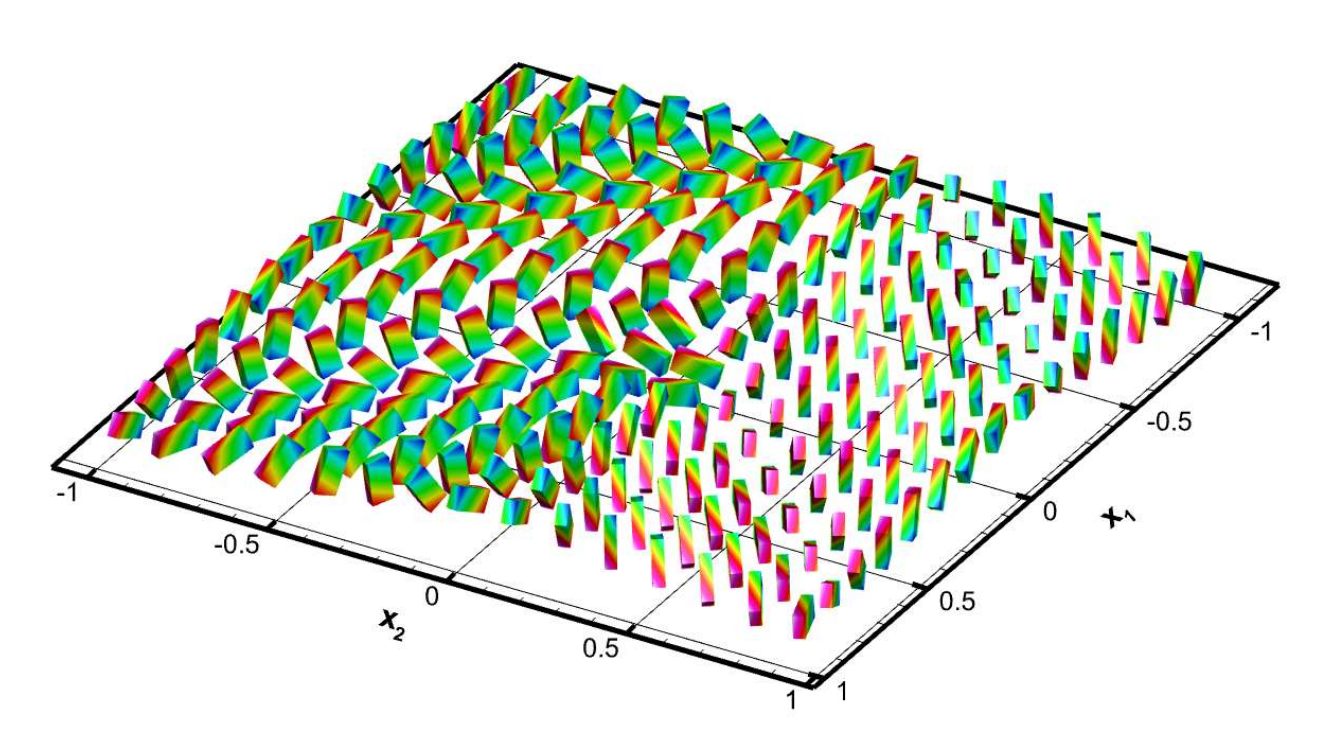}}
  \subfigure[$t=14.9292$]{  \includegraphics[scale=.33]{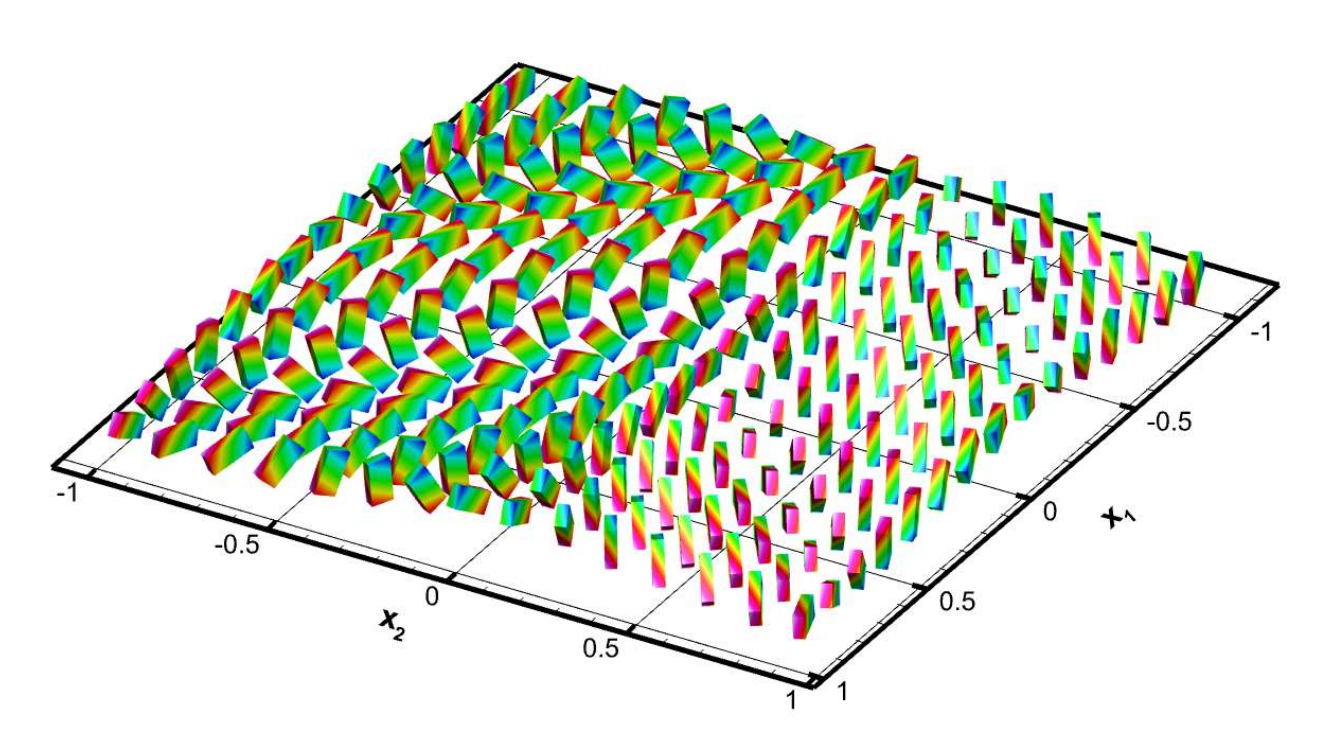}}
    \caption{\small Evolution of the orthonormal frame field $\mathbf{p}$ with initial frame field distribution  \eqref{eq:utest2} and $\hat{\bs K}=(0.05, 0.45, 3.75, 0.15, 0.35, 1.75, 5.55, 2.25, 3.955, 0.255, 1.955, 1.55)^{\intercal}$ with perturbation. Time history of snapshots of frame field at (a) $t=0.0192$ and (b) $t=9.9822$; (c) time history of energy $\mathcal{F}_{B i}[\mathbf{p}]$; (d) time history of energy $\mathcal{F}_{B i}[\mathbf{p}]$ with a rotation perturbation of the frame field at $t=10$; (e) profile of the perturbed frame field (f) profile of the equilibrium of the frame field at $t=14.9292$. } 
  \label{figs: bicase2}
\end{center} 
\end{figure}

\section{Concluding Remarks}\label{sect: remark}
In this paper, we have developed a novel gRdg scheme for orthonormal frame gradient flow system modeling the quasi-static evolution of biaxial liquid crystals. The proposed scheme is of second-order accuracy,  unconditionally energy-stable and $SO(3)$-preserving. Ample numerical experiments demonstrate its efficiency and accuracy and ability to compute highly anisotropic biaxial nematics. Since the orthonormal frame gradient flow system characterizes the dissipation of the frame field in the biaxial hydrodynamic models, the proposed method is expected to be a cornerstone for efficient and robust schemes of these problems.

\section{Acknowledgement}

J. Xu acknowledges the support from the NSFC (Nos. 12288201, 12001524 and 12371414). Z. Yang acknowledges the support from the NSFC (No. 12101399), the Shanghai Sailing Program (No. 21YF1421000), the Strategic Priority Research Program of Chinese Academy of Sciences (No. XDA25010402) and the Key Laboratory of Scientific and Engineering Computing (Ministry of Education). 

\appendix
\section{proof of equation \eqref{eq: TpSO(3)}}\label{sect: app}
\renewcommand{\theequation}{A.\arabic{equation}}
\begin{proof}
Let $\mathbf{p}_0\in SO(3)$ and define a curve passing $\mathbf{p}_0$ by
\begin{equation}\label{eq: auxiphi}
\phi(\eta)=\mathbf{p}_0{\rm exp}({\eta\mathbf{\Theta}}),\;\; \forall \mathbf{\Theta}\in so(3).
\end{equation}
We first show that $\phi(\eta)\in SO(3)$. Note that
\begin{align*}
& {\rm exp}({\eta\mathbf{\Theta}})=\mathbf{I}+\eta\mathbf{\Theta}+\frac{\eta^2}{2!}\mathbf{\Theta}^2+\frac{\eta^3}{3!}\mathbf{\Theta}^3+\cdots, \\ 
& {\rm exp}({\eta\mathbf{\Theta}})^{\intercal}=\mathbf{I}-\eta\mathbf{\Theta}+\frac{\eta^2}{2!}\mathbf{\Theta}^2-\frac{\eta^3}{3!}\mathbf{\Theta}^3+\cdots={\rm exp}({-\eta\mathbf{\Theta}}),
\end{align*}
where $\mathbf{\Theta}+\mathbf{\Theta}^{\intercal}=0$ has been used. With the help of the above equation, the fact that $\mathbf{p}_0\in SO(3)$ and the identity ${\rm det}({\rm exp}(\mathbf{A}))={\rm exp}({\rm tr}(\mathbf{A}))$ for any  $\mathbf{A}\in \mathbb{C}^{n\times n}$, we directly verify
\begin{align}
& \phi(\mathbf\eta)^{\intercal}\phi(\eta)=\mathbf{p}_0^{\intercal}{\rm exp}({-\eta\mathbf{\Theta}}){\rm exp}({\eta\mathbf{\Theta}})\mathbf{p}_0=\mathbf{I}, \label{eq: ppt}\\
&{\rm det}(\phi(\eta))={\rm det}(\mathbf{p}_0){\rm exp}({\text{tr}(\eta\mathbf{\Theta})})=1,\label{eq: det1}
\end{align}
from where we conclude that $\phi(\eta)\in SO(3)$.  Then, by a direct calculation 
\begin{equation}
\left.\frac{d \phi(\eta)}{d\eta}\right|_{\eta=0}=\mathbf{p}_0\mathbf{\Theta},
\end{equation}
we show $\mathbf{p}_0\mathbf{\Theta}$ belongs to the tangential plane to $SO(3)$ at $\mathbf{p}_0$, i.e. $\left\{\mathbf{p}_0 {\mathbf{\Theta}}\big|\; \forall \mathbf{\Theta} \in so(3)  \right\} \subset T_{\mathbf{p}_0}SO(3).$ Next, we show  $T_{\mathbf{p}_0}SO(3) \subset \left\{\mathbf{p}_0 {\mathbf{\Theta}}\big|\, \forall \mathbf{\Theta} \in so(3)  \right\}$. By equation \eqref{eq: ppt} and the fact that $\mathbf{\Theta}+\mathbf{\Theta}^{\intercal}=0$, one can obtain
\begin{equation}
\mathbf{\Theta}^{\intercal}\mathbf{p}_0^{\intercal}\mathbf{p}_0+\mathbf{p}_0^{\intercal}\mathbf{p}_0\mathbf{\Theta}     =\left.\frac{d \phi(\eta)^{\intercal}}{d \eta}\right|_{\eta=0}\mathbf{p}_0+\mathbf{p}_0^{\intercal}\left.\frac{d \phi(\eta)}{d \eta}\right|_{\eta=0}=\mathbf{0},
\end{equation}
which implies that $\mathbf{p}_0^{\intercal} \phi_{\eta}(\eta) \big|_{\eta=0} \in so(3)$. Thus, we can obtain from the identity 
\begin{equation}
\left.\frac{d \phi(\eta)}{d\eta}\right|_{\eta=0}=\mathbf{p}_0\mathbf{p}_0^{\intercal}\left.\frac{d \phi(\eta)}{d\eta}\right|_{\eta=0}
\end{equation}
that $T_{\mathbf{p}_0}SO(3) \subset \left\{{\mathbf{\Theta}}_{\mathbf{p}_0}:=\mathbf{p}_0 {\mathbf{\Theta}},\; \forall \mathbf{\Theta} \in so(3)  \right\}$. This proves the form of $T_{\mathbf{p}} S O(3)$ in equation \eqref{eq: TpSO(3)}. Finally, from the canonical basis $\big\{ \mathbf{\Theta}_i \big\}_{i=1}^3$ for $so(3)$, where
\begin{equation}
\mathbf{\Theta}_1=
\begin{bmatrix}
0 & 1 & 0\\
-1 & 0 & 0\\
0& 0 & 0
\end{bmatrix}, \quad 
\mathbf{\Theta}_2=
\begin{bmatrix}
0 & 0 & 1\\
0 & 0 & 0\\
-1& 0 & 0
\end{bmatrix},\quad 
\mathbf{\Theta}_3=
\begin{bmatrix}
0 & 0 & 0\\
0 & 0 & 1\\
0& -1 & 0
\end{bmatrix},
\end{equation} 
one arrives at the basis for  $T_{\mathbf{p}} S O(3)$ in equation \eqref{eq: Tpbasis}, which ends the proof.
\end{proof}
\section{Various forms of biaxial elastic energy}\label{sect: app2}
\renewcommand{\theequation}{B.\arabic{equation}}
It is worthwhile to note the relation between the biaxial energy functional and the classical uniaxial Oseen-Frank energy functional. Under  periodic  boundary conditions, if we take $K_7=K_{10},K_2=K_3=K_5=K_6=K_8=K_9=K_{11}=K_{12}=0$ in the equation \eqref{eq: density}, it reduces to the well-known Oseen-Frank energy functional, 
\begin{equation}\label{eq:oFrank}
  \mathcal{F}[\boldsymbol{n_1}]=\frac{1}{2} \int_{\Omega} K_1(\nabla \cdot \boldsymbol{n_1})^2+K_4|\boldsymbol{n_1} \cdot(\nabla \times \boldsymbol{n_1})|^2+K_7|\boldsymbol{n_1} \times(\nabla \times \boldsymbol{n_1})|^2d V.
\end{equation}

Besides, in order to relate the current elastic coefficients with molecule model, we also describe another equivalent form of the biaxial energy density (see e.g. \cite{stallinga1994theory,xu2018calculating}):
\begin{equation}\label{eq: density3}
  \begin{aligned}
  f_{B_i}[\mathbf{p}]=&\frac{1}{2} K_{1111} D_{11}^2+K_{2222} D_{22}^2+K_{3333} D_{33}^2 \\
  &+K_{1212} D_{12}^2+K_{2121} D_{21}^2+K_{2323} D_{23}^2+K_{3232} D_{32}^2+K_{3131} D_{31}^2+K_{1313} D_{13}^2 \\
  &+K_{1221} D_{12} D_{21}+K_{2332} D_{23} D_{32}+K_{1331} D_{13} D_{31},	
  \end{aligned}
\end{equation}
with 
\begin{equation}
  \begin{aligned}
  &D_{11}=n_{1 i} n_{2 j} \partial_i n_{3 j},\;\; D_{12}=n_{1 i} n_{3 j} \partial_i n_{1 j},\;\; D_{13}=n_{1 i} n_{1 j} \partial_i n_{2 j}, \\
  &D_{21}=n_{2 i} n_{2 j} \partial_i n_{3 j},\;\; D_{22}=n_{2 i} n_{3 j} \partial_i n_{1 j},\;\; D_{23}=n_{2 i} n_{1 j} \partial_i n_{2 j}, \\
  &D_{31}=n_{3 i} n_{2 j} \partial_i n_{3 j},\;\; D_{32}=n_{3 i} n_{3 j} \partial_i n_{1 j},\;\; D_{33}=n_{3 i} n_{1 j} \partial_i n_{2 j},
  \end{aligned}
\end{equation}
where the summation convention on repeated indices have been used.
It is direct to verify the following identities
\begin{align*}
  &D_{22}+D_{33} =\bs{n_1} \cdot({\nabla} \times \boldsymbol{n_1}), \quad D_{33}+D_{11} =\boldsymbol{n_2} \cdot({\nabla} \times \boldsymbol{n_2}),\\
  & D_{11}+D_{22}=\boldsymbol{n_3} \cdot({\nabla} \times \boldsymbol{n_3}),\quad D_{12}  =-\boldsymbol{n_2} \cdot({\nabla} \times \boldsymbol{n_1}),\\
  & D_{23}  =-\boldsymbol{n_3} \cdot({\nabla} \times \boldsymbol{n_2}),\qquad \quad D_{31}  =-\boldsymbol{n_1} \cdot({\nabla} \times \boldsymbol{n_3}), \\
  &D_{21}  =-\boldsymbol{n_1} \cdot({\nabla} \times \boldsymbol{n_2})={\nabla} \cdot \boldsymbol{n_3}-\boldsymbol{n_2} \cdot({\nabla} \times \boldsymbol{n_1}), \\
  &D_{32}  =-\boldsymbol{n_2} \cdot({\nabla} \times \boldsymbol{n_3})={\nabla} \cdot \boldsymbol{n_1}-\boldsymbol{n_3} \cdot({\nabla} \times \boldsymbol{n_2}), \\
  &D_{13}  =-\boldsymbol{n_3} \cdot({\nabla} \times \boldsymbol{n_1})={\nabla} \cdot \boldsymbol{n_2}-\boldsymbol{n_1} \cdot({\nabla} \times \boldsymbol{n_3}).
\end{align*}
Inserting the above identities into equation \eqref{eq: density3}, we have
\begin{equation}
\begin{aligned}\label{eq: equi}
  f_{B i}(\mathbf{p}, \nabla \mathbf{p})= & \frac{1}{4}\left\{ \left(-K_{1111}+K_{2222}+K_{3333}-K_{2332}\right)\left(\nabla \cdot \boldsymbol{n}_1\right)^2\right.\\
  &+\left(K_{1111}-K_{2222}+K_{33333}-K_{1331}\right)\left(\nabla \cdot \boldsymbol{n}_2\right)^2 \\
  &+\left(K_{1111}+K_{2222}-K_{3333}-K_{1221}\right)\left(\nabla \cdot \boldsymbol{n}_3\right)^2\\
  &+\left(-K_{1111}+K_{2222}+K_{3333}\right)\left(\boldsymbol{n}_1 \cdot \nabla \times \boldsymbol{n}_1\right)^2 \\
  &+\left(K_{1111}-K_{2222}+K_{3333}\right)\left(\boldsymbol{n}_2 \cdot \nabla \times \boldsymbol{n}_2\right)^2\\
  &+\left(K_{1111}+K_{2222}-K_{3333}\right)\left(\boldsymbol{n}_3 \cdot \nabla \times \boldsymbol{n}_3\right)^2\\
  &+\left(-K_{1111}+K_{2222}-K_{3333}+2K_{1313}+K_{1331}\right)\left(\bs{n}_3 \cdot \nabla \times \bs{n}_1\right)^2\\
  &+\left(-K_{1111}-K_{2222}+K_{3333}+2K_{2121}+K_{1221}\right)\left(\bs{n}_1 \cdot \nabla \times \bs{n}_2\right)^2\\
  &+\left(K_{1111}-K_{2222}-K_{3333}+2K_{3232}+K_{2332}\right)\left(\bs{n}_2 \cdot \nabla \times \bs{n}_3\right)^2\\
  &+\left(-K_{1111}-K_{2222}+K_{3333}+2K_{1212}+K_{1221}\right)\left(\bs{n}_2 \cdot \nabla \times \bs{n}_1\right)^2 \\
  &+\left(K_{1111}-K_{2222}-K_{3333}+2K_{2323}+K_{2332}\right)\left(\bs{n}_3 \cdot \nabla \times \bs{n}_2\right)^2\\
  &\left.+\left(-K_{1111}+K_{2222}-K_{3333}+2K_{3131}+K_{1331}\right)\left(\bs{n}_1 \cdot \nabla \times \bs{n}_3\right)^2\right\}.
\end{aligned}
\end{equation}

This equivalent form is chosen to obtain the elastic coefficients from molecular coefficients (see \cite{xu2018calculating} for detailed discussions). With the help of equation \eqref{eq: equi}, we can take such elastic coefficients into consideration.


\bibliography{refpapers}

\end{document}